\newcommand{\definition}[2][]{\emph{#2}\index{#1#2}}
\newcommand{\id}{\mathrm{id}}
\newcommand{\Ad}{\mathrm{Ad}}
\newcommand{\Hom}{\mathrm{Hom}}
\newcommand{\End}{\mathrm{End}}
\newcommand{\ind}{\mathrm{Ind}}
\newcommand{\gr}{\mathrm{gr}}
\newcommand{\tr}{\mathrm{tr}}
\newcommand{\Aut}{\mathrm{Aut}}
\newcommand{\ad}{\mathrm{ad}}
\newcommand{\U}{\mathrm{U}}
\newcommand{\PU}{\mathrm{PU}}
\newcommand{\diag}{\mathrm{diag}}
\newcommand{\dR}{\mathrm{dR}}
\newcommand{\A}{\mathcal{A}}
\newcommand{\B}{\mathcal{B}}
\newcommand{\bb}{\flat}
\newcommand{\BB}{\textsc{b}}
\newcommand{\C}{\mathbb{C}}
\newcommand{\ch}{\mathrm{ch}}
\newcommand{\Chkr}{\mathrm{Chkr}}
\newcommand{\D}{\slashed{D}}
\newcommand{\Cl}{\mathbb{C}l}
\newcommand{\E}{\mathcal{E}}
\newcommand{\F}{\mathcal{F}}
\newcommand{\HH}{\bar{\mathcal
H}}
\newcommand{\K}{\mathrm{K}}
\newcommand{\sL}{\mathscr{L}}
\newcommand{\M}{\mathrm{M}}
\newcommand{\R}{\mathbb{R}}
\newcommand{\smooth}{C^{\infty}}
\newcommand{\Spin}{\mathrm{Spin}}
\newcommand{\SO}{\mathrm{SO}}
\newcommand{\Z}{\mathbb{Z}}
\newcommand{\ev}{\mathrm{ev}}
\newcommand{\odd}{\mathrm{odd}}
\newtheorem{Fact}{Fact}
\newtheorem{thm}{Theorem}[section]
\newtheorem{prop}[thm]{Proposition}
\newtheorem{cor}[thm]{Corollary}
\newtheorem{lem}[thm]{Lemma}
\theoremstyle{definition}
\newtheorem{defn}[thm]{Definition}
\theoremstyle{remark}
\newtheorem{remark}[thm]{Remark}
\begin{document}

\title{Projective Dirac operators, twisted K-theory, and local index formula \thanks{This article is the author's dissertation in publication form.}}
\author{
Dapeng Zhang \thanks{Supported in part by International Max-Planck Research School (IMPRS).}
\\California Institute of Technology, MC 253-37, Pasadena, CA 91125, USA
\\E-mail: zdapeng@caltech.edu
}
\date{}
\maketitle

\noindent \textbf{Abstract:}
We construct a canonical noncommutative spectral triple for every
oriented closed Riemannian manifold, which represents the fundamental class in the twisted K-homology of
the manifold. This so-called ``projective spectral triple'' is Morita
equivalent to the well-known commutative spin spectral triple
provided that the manifold is spin-c. We give an explicit local formula for the twisted Chern character for K-theories twisted with torsion classes, and with this formula we show that the twisted Chern character of the projective spectral triple is identical to the Poincar\'e dual of the A-hat genus of the manifold.

\medskip

\noindent \textit{Mathematics Subject Classification} (2010). 19K56, 19L50, 58J20.

\noindent \textit{Keywords.} Twisted K-theory, spectral triple, Chern character.

\section*{Introduction}\addcontentsline{toc}{section}{Introduction}

The notion of spectral triple in Connes' noncommutative geometry
arises from extracting essential data from the K-homology part of
index theory in differential geometry. The following are basic
examples of commutative spectral triples:
\begin{enumerate}
\item The {\em spin spectral triple} for a spin$^c$ manifold $M$
with a spinor bundle $S$:
 $$\varsigma_1=(\smooth(M),\ \Gamma(S),\ \D,\ \omega),$$
where $\omega$ is the grading operator on $S$.

(Throughout this paper, unless otherwise stated explicitly, all vector spaces, algebras, differential forms, and vector bundles except cotangent bundles are considered over the field $\C$ of
{\em complex} numbers. For example, the notation $\smooth(M)$ is the same as $\smooth(M,\C)$. The notation $\Gamma(M,E)$ or $\Gamma(E)$ for a fibre bundle $E$ over $M$ always stands for the space of {\em smooth} sections of $E$.)

The identity between the analytic and topological indices of $\D$ is the
Atiyah-Singer index formula for the spin$^c$ manifold.
\item The spectral triple for the signature for a Riemannian
manifold $M$:
$$\varsigma_2=(\smooth(M),\ \Omega(M),\ d+d^*,\ *(-1)^{{\deg(\deg-1)\over 2}-{\dim M\over 4}}).$$
(For each function $f:\mathbb N\to \C$, we denote by $f(\deg):\Omega(M)\to \Omega(M)$  the linear operator given by
$$f(\deg)\,\omega = f(k)\,\omega, \  \forall \omega\in \Omega^k(M).)$$
The index formula corresponding to this spectral triple is the
Hirzebruch signature formula.
\item The spectral triple for the Euler characteristic for a Riemannian
manifold $M$:
$$\varsigma_3=(\smooth(M),\ \Omega(M),\ d+d^*,\ (-1)^{\deg}).$$

The local index formula corresponding to this spectral triple is the
Gauss-Bonnet-Chern theorem.
\end{enumerate}
In fact, every special case of Atiyah-Singer Index theorem
corresponds to an instance of commutative spectral triple (with
additional structures when necessary). These spectral triples, like
$\varsigma_1,\ \varsigma_2,\ \varsigma_3$, have many nice properties such as
``the five conditions'' in Connes \cite{reconstruction}, and
conversely, it is proved that \cite{reconstruction} any commutative
spectral triple $(\A,\mathcal H, D, \gamma)$ satisfying those five
conditions is equivalent to a spectral triple consisting of the
algebra of smooth functions on a Riemannian manifold $M$, the module
of sections of a Clifford bundle over $M$ and a Dirac type operator
on it. Furthermore, if $(\A,\mathcal H, D, \gamma)$ satisfies an
additional important property -- the Poincar\'e duality in K-theory
-- which means $(\A,\mathcal H, D, \gamma)$ represents the
fundamental class (i.e., a K-orientation) in $K^0(\A)$, then it is
equivalent to a spin spectral triple $\varsigma_1$ for some spin$^c$
manifold. The spectral triples $\varsigma_2$ and $\varsigma_3$ do not have the property of Poincar\'e
duality; however, we show in this paper (Corollary
\ref{projectivesptr}, Theorem \ref{duality}) that for every closed
oriented Riemannian manifold there is a canonical noncommutative
spectral triple having the property of
Poincar\'e duality in $K^0(M,W_3(M))$, the twisted K-theory of $M$ with local coefficient $W_3(M)$ -
 the third integral Stiefel-Whitney class. This canonical spectral triple is called the projective spectral triple on $M$, and its center is unitarily equivalent to $\varsigma_2$. The projective
spectral triple is Morita equivalent to the spin spectral triple
provided that the underlying manifold is spin$^c$. On the other hand, in
the paper of Mathai-Melrose-Singer \cite{Mathai}, a so-called
projective spin Dirac operator was defined for every Riemannian
manifold; however, this operator is in a formal sense. It turns out
that the projective spectral triple, in which the Dirac operator is
really an operator acting on a Hilbert space, just plays the role of
the projective spin Dirac operator.

A spectral triple that gives rise to Poincar\'e duality in KK-theory first appeared in Kasparov \cite{Kasparov}. Kasparov's fundamental class, namely the Dirac element in Definition-Lemma 4.2 in \cite{Kasparov}, is essentially a spectral triple, although there was no such terminology at that time. The algebra underlying Kasparov's spectral triple is noncommutative and $\Z_2$-graded, but in many situations things would become much less complicated if the algebra were ungraded, especially when considering its Dixmier-Douady class or passing it to cyclic cohomology class via Connes-Chern character.
The projective spectral triple constructed in this paper (Corollary \ref{projectivesptr})
has a noncommutative but ungraded algebra, and we show in Section \ref{Kasparov} that it is in fact Morita equivalent to that of Kasparov's. To construct such a spectral triple, we first review in Section \ref{spectralanalysis} the definition of spectral triples with certain smoothness property, then introduce in Section \ref{morita} the notion of Morita equivalence between them, and then find in Section \ref{glue} that local spin spectral triples on small open subsets of a manifold can be glued together, via Morita equivalence, to form a globally defined spectral triple.

The noncommutative algebras underlying projective spectral triples are examples of Azumaya algebras. In Section \ref{Azumaya} we review some basic theory on  Azumaya algebras, such as the fact that Morita equivalent classes of Azumaya algebras are classified by their Dixmier-Douady classes, and that the K-theory of an Azumaya algebra $\mathbf A$ coincides with the twisted K-theory of the underlying manifold with the Dixmier-Douady class of $\mathbf A$.

Mathai-Stevenson \cite{Mathai-Stevenson} showed that the K-theory (tensoring with $\C$) of an Azumaya algebra $\mathbf A$ is isomorphic to the periodic cyclic homology group of $\mathbf A$ via Connes-Chern character, and that the latter is isomorphic to the twisted de-Rham cohomology of the underlying manifold with the Dixmier-Douady class of $\mathbf A$ via a generalized Connes-Hochschild-Kostant-Rosenberg (CHKR) map.
$$
\xymatrix{
K^0(M,\delta(\mathbf A))\otimes \C \ar[rr]^{\ch}_{\cong} \ar[rd]_{\ch_{\delta(\mathbf A)}}^{\cong} &
 &
HP_0(\mathbf A) \ar[ld]^{\Chkr}_{\cong}\\
&H^{\ev}_{\dR}(M,\delta(\mathbf A))}
$$
In Section \ref{CHKR}, we find an alternative CHKR map (Theorem \ref{alternative}) for the special case that the Dixmier-Douady class of $\mathbf A$ is torsion.

For each algebra $\A$, a finite projective $\A$-module $\E$ as a K-cocycle in the K-theory of $\A$ has a Connes-Chern character $\ch([\E])$ as a cyclic homology class, whereas a spectral triple on $\A$ as a K-cycle in the K-homology group of $\A$ also has a Connes-Chern character as a cyclic cohomology class, and the index pairing of a K-cocycle and a K-cycle is identical to the index pairing of their Connes-Chern characters \cite{NDG,Connes}. The main purpose of this paper is to compute the Connes-Chern character of the projective spectral triple and identify it with the Poincar\'e dual of the A-hat genus of the underlying manifold.

In Section \ref{localformula}, with the help of the alternative CHKR map $\rho$ and by applying Poincar\'e duality, we obtain our main result, a local formula for the Connes-Chern character of the projective spectral triple for every even dimensional oriented closed Riemannian manifold.

\noindent\textbf{Acknowledgements.}  I am very grateful to
Bai-Ling Wang. He foresaw the possibility that the projective spin
Dirac operator defined by \cite{Mathai} in formal sense can be
realized by a certain spectral triple, and introduced his
interesting research project to me in 2008. The spectral triple in
his mind turned out to be the projective spectral triple constructed
in this paper. Without his insight, I wouldn't have been writing
this thesis.
I would like to thank Adam Rennie for his very helpful remarks about Morita equivalence between various presentations of Kasparov's fundamental class on reading a draft of this paper.
I also wish to thank my advisor, Matilde
Marcolli, for her many years of encouragement, support, and many
helpful suggestions on both this research and other aspects.
\begin{section}{Azumaya bundles, twisted K-theory, and twisted cohomology}\label{Azumaya}

Suppose $X$ is a closed oriented manifold. Let
$$\M_n=\left\{
\begin{array}{ll}
\M_n(\C), & n=1,2,\dots  \\
\K(H), & n=\infty
\end{array}\right. ,
\quad
\U_n=\left\{
\begin{array}{ll}
\U(n), & n=1,2,\dots  \\
\U(H), & n=\infty
\end{array}\right.,$$
where $n$ could be either a positive integer or infinity, $H$ is an infinite dimensional separable Hilbert space, $\K(H)$ is the $C^*$-algebra of compact operators on $H$, and $\U(H)$ is the topological group of unitary operators with the operator norm topology. Kuiper's theorem states that $\U(H)$ is contractible. Let $\PU_n=\U_n/\U(1)$ be the projective unitary groups. In particular $\PU(H)=\PU_{\infty}$ is endowed with the topology induced from the norm topology of $\U(H)$.

Let $\Aut(\M_n)$ be the group of automorphisms of the $C^*$-algebra $\M_n$.

\begin{Fact}For every element $g\in \Aut(\M_n)$, there exists $\tilde g\in \U_n$, such that $g =\Ad \tilde g$. For every $u\in \U_n$, $\Ad\ u=1$ if and only if $u$ is scalar. In other words, as groups
$\PU_n\cong \Aut(\M_n).$
\end{Fact}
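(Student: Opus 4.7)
The plan is to reduce both claims to the uniqueness of the irreducible $*$-representation of $\M_n$. In either case ($n$ finite or $n=\infty$), $\M_n$ acts on its standard Hilbert space $H_n$ (namely $\C^n$ or $H$) by an irreducible faithful $*$-representation $\pi$, and a classical fact (Wedderburn for finite $n$, and the representation theory of $\K(H)$ for $n=\infty$, using that every nonzero irreducible representation of $\K(H)$ is unitarily equivalent to the defining one) tells us this is the unique irreducible $*$-representation up to unitary equivalence.

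Given $g\in\Aut(\M_n)$, I would first consider the twisted representation $\pi\circ g\colon \M_n\to \B(H_n)$. It is again an irreducible $*$-representation, so by uniqueness there is a unitary $\tilde g\in\U_n$ with $\pi(g(a))=\tilde g\,\pi(a)\,\tilde g^{-1}$ for all $a\in\M_n$. Identifying $\M_n$ with its image under $\pi$, this reads $g(a)=\tilde g\,a\,\tilde g^{-1}$, i.e.\ $g=\Ad\tilde g$. This gives surjectivity of $\Ad\colon \U_n\to\Aut(\M_n)$.

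For the kernel, if $u\in\U_n$ satisfies $\Ad u=1$, then $u$ commutes with every element of $\M_n$ on $H_n$. Since $\M_n$ acts irreducibly, Schur's lemma forces $u$ to be a scalar multiple of the identity; unitarity then gives $u\in\U(1)$. Combining, $\Ad$ descends to an isomorphism of groups $\PU_n=\U_n/\U(1)\xrightarrow{\;\cong\;}\Aut(\M_n)$.

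The main technical point I expect to need care with is the case $n=\infty$: one must invoke $C^*$-algebraic automorphisms (automatically norm-continuous, hence $*$-preserving by positivity arguments), and use that every irreducible $*$-representation of $\K(H)$ is unitarily equivalent to the defining one — for this I would cite the standard representation theory of the compact operators rather than reprove it. Everything else is formal.
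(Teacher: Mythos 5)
The paper records this as a \emph{Fact} and gives no proof at all; it is treated as a standard background result on $C^\ast$-algebras. Your proposed argument is the standard one and is essentially correct: uniqueness (up to unitary equivalence) of the irreducible $\ast$-representation of $\M_n$ (Wedderburn for finite $n$, the classification of representations of $\K(H)$ for $n=\infty$) gives surjectivity of $\Ad\colon \U_n\to\Aut(\M_n)$, and Schur's lemma (the commutant of an irreducible $\ast$-representation is the scalars) identifies the kernel with $\U(1)$. That is what any reader would reconstruct.

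One small caveat on your parenthetical remark: norm-continuity of an algebra automorphism of a $C^\ast$-algebra does \emph{not} automatically force it to be $\ast$-preserving. For instance $\Ad\,\mathrm{diag}(1,2)$ is a bounded algebra automorphism of $\M_2(\C)$ that is not a $\ast$-automorphism. The correct sufficient condition is that an \emph{isometric} algebra automorphism of a $C^\ast$-algebra is a $\ast$-automorphism, and boundedness alone gives no such conclusion. In the present context the issue is moot because $\Aut(\M_n)$ is, by the standard convention adopted in the paper, the group of $\ast$-automorphisms; with that reading your argument goes through cleanly for both the finite and the $\K(H)$ case.
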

\begin{Fact}\label{torsion}If $n$ is finite, $\U_n/\U(1)\cong \mathrm{SU}(n)/\{z\in \C \mid z^n=1\}$.
\end{Fact}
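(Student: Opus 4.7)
The plan is to exhibit an explicit surjective homomorphism $\mathrm{SU}(n)\to \U_n/\U(1)$ and compute its kernel. Since both sides are compact Lie groups, this will yield the desired isomorphism via the first isomorphism theorem.

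First I would define $\varphi:\mathrm{SU}(n)\to \U_n/\U(1)$ as the composition of the inclusion $\mathrm{SU}(n)\hookrightarrow \U_n$ with the quotient map $\U_n\to \U_n/\U(1)$. This is manifestly a continuous homomorphism. To see surjectivity, take any $u\in \U_n$ and set $d=\det(u)\in \U(1)$. Choose an $n$-th root $\lambda\in \U(1)$ of $d$; such a root exists because $\U(1)$ is divisible. Then $\lambda^{-1}u\in \mathrm{SU}(n)$, and $\varphi(\lambda^{-1}u)$ equals the class of $u$ in $\U_n/\U(1)$, since $\lambda^{-1}u$ and $u$ differ by the scalar $\lambda^{-1}$.

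Next I would compute the kernel. An element $v\in \mathrm{SU}(n)$ lies in $\ker \varphi$ iff $v\in \U(1)\cdot I$, i.e. $v=zI$ for some $z\in \C$ with $|z|=1$. The condition $v\in \mathrm{SU}(n)$ then forces $\det(zI)=z^n=1$, so $\ker\varphi=\{zI\mid z^n=1\}$, which matches the set on the right-hand side of the statement.

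The argument is elementary, so there is no serious obstacle; the only small point worth noting is that the isomorphism $\mathrm{SU}(n)/\{z\mid z^n=1\}\to \U_n/\U(1)$ induced by $\varphi$ is a continuous bijective homomorphism between compact Hausdorff topological groups, hence automatically a homeomorphism (equivalently, an isomorphism of Lie groups), so the identification is topological and not merely algebraic.
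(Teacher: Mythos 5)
Your argument is correct, and in fact the paper records this as a \emph{Fact} without supplying any proof, so there is nothing to compare against; the homomorphism-plus-kernel computation you give is the standard (and essentially unique) elementary argument. The only minor stylistic remark is that you could shorten the surjectivity step by noting directly that $\U(n)=\U(1)\cdot\mathrm{SU}(n)$, but your version with an explicit $n$-th root of the determinant is equivalent and perfectly fine.
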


\begin{defn}An \definition{Azumaya bundle} over $X$ of rank $n$ (possibly $n=\infty$) is a vector bundle over $X$ with fibre $\M_n$ and structure group $\PU_n$.
\end{defn}

Every Azumaya bundle of rank $n$ is associated with a principal $\PU_n$-bundle and vice versa.

\begin{defn}
The space $\mathbf{A}=\Gamma^0(A)$ of continuous sections of an Azumaya bundle $A$ over $X$ forms a $C^*$-algebra called an \definition{Azumaya algebra} over $X$.
\end{defn}

The following are examples of Azumaya algebras over $X$:
\begin{enumerate}
\item the algebra of complex valued continuous functions $C^0(X)$;
\item $C^0(X)\otimes \M_n$;
\item if $E$ is a finite rank vector bundle over $X$, the algebra of continuous sections of $\End(E)$, $\Gamma^0(\End(E))$;
\item if $X$ is an even dimensional Riemannian manifold, the algebra of continuous sections of the Clifford bundle $\Cl(T^*X)$, $\Gamma^0(\Cl(T^*X))$;
\item if $E$ is a real vector bundle over $X$ of even rank with a fiberwise inner product, the algebra of continuous sections of the Clifford bundle $\Cl(E)$, $\Gamma^0(\Cl(E))$.
\end{enumerate}
Note that examples i), ii) and iii) are Morita equivalent (in the category of C$^*$-algebras, i.e., strongly Morita equivalent \cite{Rieffel, ContinuousTrace}) to $C^0(X)$, while example iv) or v) is Morita equivalent to $C^0(X)$ if and only if $X$ or $E$ is spin$^c$ respectively.
\begin{Fact}The center of a finite Azumaya algebra over $X$ is $C^0(X)$.
\end{Fact}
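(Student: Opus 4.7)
The plan is to reduce the statement to the pointwise fact that $Z(\M_n(\C)) = \C \cdot I_n$ and to observe that the conjugation action of the structure group trivially preserves scalar matrices, so that the ``center subbundle'' of $A$ is canonically a trivial complex line bundle. The continuous sections of a trivial line bundle are $C^0(X)$, giving the desired identification. The only real work is to argue that an element of $\mathbf{A}=\Gamma^0(A)$ is central in $\mathbf{A}$ if and only if it is fibrewise central.

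More concretely, I would first note that for each $x\in X$ the fibre $A_x$ is (non-canonically) isomorphic as a $C^*$-algebra to $\M_n$, whose center is $\C\cdot I_n$. By the first Fact of this section, the transition functions of $A$ lie in $\PU_n=\Aut(\M_n)$, and they act by conjugation $\Ad\tilde g$ for some lift $\tilde g\in\U_n$; such a conjugation fixes the scalar matrices pointwise. Hence the assignment $x\mapsto Z(A_x)=\C\cdot I_{A_x}$ defines a canonically trivial complex line subbundle $Z(A)\subset A$, with a distinguished global unit section $x\mapsto I_{A_x}$. Its space of continuous sections is therefore $C^0(X)\cdot I\cong C^0(X)$, sitting naturally inside $\mathbf{A}$ as scalar multiples of the identity.

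It remains to show $Z(\mathbf{A})=\Gamma^0(Z(A))$. The inclusion $\supseteq$ is immediate because $Z(A)$ is fibrewise central and multiplication in $\mathbf{A}$ is fibrewise. For the reverse inclusion, suppose $s\in Z(\mathbf{A})$ and fix $x\in X$; I want to show $s(x)\in Z(A_x)$, i.e. $[s(x),a]=0$ for every $a\in A_x$. The key observation is that on any trivializing neighborhood $U\ni x$ with $A|_U\cong U\times\M_n$, every element $a\in A_x$ extends to a continuous local section $\tilde a$ on $U$; multiplying by a bump function $\varphi\in C^0(X)$ with $\varphi(x)=1$ and support in $U$, and extending by zero, produces a global section $\varphi\tilde a\in\mathbf{A}$ whose value at $x$ is $a$. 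Then $[s,\varphi\tilde a]=0$ in $\mathbf{A}$ forces $[s(x),a]=0$, so $s(x)\in Z(A_x)$ as required.

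The main (mild) obstacle is this last step of producing, for each fibre element, a global continuous section realizing it. Local triviality of $A$ together with a partition-of-unity/bump-function argument handles it cleanly; finiteness of $n$ enters only through the cited description of the structure group and is needed so that $\Ad$ acts trivially on scalars in the usual matrix sense, but the same argument in fact works in the $n=\infty$ case as well since $\Ad u$ still fixes the scalars in $\K(H)^{+}$.
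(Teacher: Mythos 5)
The paper states this as a \emph{Fact} without giving a proof, so there is nothing to compare against; taken on its own, your argument is correct for the finite-rank case. The structure is the right one: reduce to the pointwise fact $Z(\M_n(\C))=\C\cdot I_n$, observe that conjugation by unitaries fixes scalars so the center subbundle is canonically trivial with a global unit section, and then carry out the only non-trivial step --- showing a section is central in $\mathbf{A}$ iff it is fibrewise central --- via local triviality and a bump function. That bump-function argument is the standard one and it works.

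The closing remark about $n=\infty$, however, is wrong and should be deleted. For an infinite-rank Azumaya bundle the fibre is $\K(H)$, which is non-unital: the identity operator is not compact, so $\K(H)$ contains no nonzero scalars and $Z(\K(H))=\{0\}$. Passing to the unitalization $\K(H)^{+}$ does not help, since $\mathbf{A}=\Gamma^0(A)$ consists of sections valued in $\K(H)$, not in $\K(H)^{+}$, and the ``unit section'' you would need is simply not there. Consequently the center of an infinite-rank Azumaya algebra over a compact base is $\{0\}$, not $C^0(X)$. This is precisely the reason the Fact carries the hypothesis \emph{finite}; finiteness is not merely cosmetic (making ``$\Ad$ fixes scalars'' read nicely in matrix language), it is what guarantees the fibres are unital so that the central line subbundle exists at all.
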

\begin{Fact}An Azumaya algebra $\mathbf A$ over $X$ is locally Morita equivalent to $C^0(X)$.
\end{Fact}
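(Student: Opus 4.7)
The plan is to combine local triviality of the underlying Azumaya bundle with the observation already made in example ii), namely that $C^0(Y)\otimes \M_n$ is (strongly) Morita equivalent to $C^0(Y)$ for any space $Y$. Taken together, these give the statement at once.

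First I would unpack the definition. By definition $A\to X$ is a locally trivial fibre bundle with fibre $\M_n$ and structure group $\PU_n\cong\Aut(\M_n)$. Thus for every $x\in X$ there is an open neighborhood $U\ni x$ and a bundle isomorphism $A|_U\cong U\times \M_n$ of $\M_n$-bundles. Passing to continuous sections yields a $C^*$-algebra isomorphism
$$\mathbf{A}|_U:=\Gamma^0(A|_U)\;\cong\;C^0(U)\otimes\M_n.$$

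Second, I would exhibit the standard Morita equivalence between $C^0(U)\otimes\M_n$ and $C^0(U)$. For finite $n$ the equivalence bimodule is the free module $C^0(U)^n$ (equivalently the space of sections of the trivial rank-$n$ bundle $U\times\C^n$) with its canonical $C^0(U)$-valued and $\M_n(C^0(U))$-valued inner products; this is the classical finite-rank instance of Rieffel's imprimitivity theorem. For $n=\infty$ the same construction applies with $\C^n$ replaced by the separable Hilbert space $H$ and $\M_n$ replaced by $\K(H)$, giving $C^0(U)\otimes H$ as the imprimitivity bimodule between $C^0(U)$ and $C^0(U)\otimes\K(H)$. Chaining this with the local trivialization produces the required Morita equivalence between $\mathbf{A}|_U$ and $C^0(U)$.

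I do not anticipate a genuine obstacle: the argument is essentially the direct unfolding of the definition of an Azumaya bundle combined with example ii). The only point deserving any care is the case $n=\infty$, where one must ensure that the imprimitivity bimodule $C^0(U)\otimes H$ really does implement a strong Morita equivalence of $C^*$-algebras (as opposed to merely an algebraic Morita equivalence); this is standard and follows from Kuiper's theorem being available together with the $C^*$-completeness of the relevant inner products.
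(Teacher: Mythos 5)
Your argument is correct and is the standard unfolding that the paper has in mind: the paper states this as a \textbf{Fact} without proof, presupposing exactly the two ingredients you list (local triviality of the Azumaya bundle, giving $\mathbf{A}|_U\cong C^0(U)\otimes\M_n$, and the elementary Morita equivalence between $C^0(U)\otimes\M_n$ and $C^0(U)$ via $C^0(U)^n$ or $C^0(U)\otimes H$). There is nothing to add beyond the caveat about $n=\infty$ that you yourself raise, and you handle it correctly.
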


The obstruction to an Azumaya algebra being (globally) Morita equivalent to its ``center'' is characterized by its Dixmier-Douady class:

\begin{defn}For every Azumaya bundle $\pi:A \to X$ of rank $n$, there is a cohomology class $\delta(A)$ in $H^3(X,\Z)$, called the \definition{Dixmier-Douady class} of $A$, constructed as follows:

Let $\{U_i\}_{i\in I}$ be a good covering of $X$, and write $U_{i_1\cdots i_n}$ for the intersection of $U_{i_1},U_{i_2},\cdots,U_{i_n}$. Suppose
$$\psi_i: U_i\times \M_n \to \pi^{-1}(U_i), \quad \forall i\in I ,$$
provide a local trivialization of $A$.
Then $\psi_i^{-1}\psi_j: U_{ij}\times \M_n \to U_{ij}\times \M_n$ give rise to the transition functions $g_{ij}\in C^0(U_{ij}, \Aut(\M_n))$.
Pick $\tilde g_{ij}\in C^0(U_{ij},\U_n)$ such that $\Ad\tilde g_{ij}=g_{ij}$ and $\tilde g_{ij}=\tilde g_{ji}^{-1}$.
Thus $\Ad (\tilde g_{ij}\tilde g_{jk}\tilde g_{ki}) = g_{ij}g_{jk}g_{ki} =1$, which implies
$$\mu_{ijk}:=\tilde g_{ij}\tilde g_{jk}\tilde g_{ki} \in C^0(U_{ijk}, \U(1)).$$
Therefore $\mu$ is a \v Cech $2$-cocycle with coefficient sheaf $\mathscr U(1):U\mapsto C^0(U,\U(1))$, since
$(\partial \mu)_{ijkl}=\mu_{jkl}\mu_{ikl}^{-1}\mu_{ijl}\mu_{ijk}^{-1}=1$. The \v Cech $2$-cocycle $\mu$ is also called the {\em bundle gerbe} structure of $A$. The short exact sequence of sheaves
$$0 \longrightarrow \Z \longrightarrow \mathscr R \xrightarrow{\exp 2\pi i\cdot} \mathscr U(1) \longrightarrow 0,$$
where $\mathscr R$ is the sheaf $U\mapsto C^0(U,\R)$, induces an isomorphism of \v Cech cohomology groups
$$\partial: \check H^2(X,\mathscr U(1))\xrightarrow{\cong} \check H^3(X,\Z).$$
Define the Dixmier-Douady class by $\delta(A):=\partial [\mu]$. More explicitly, pick $\nu_{ijk}\in C^0(U_{ijk},\R)$ such that $$\exp 2\pi i \nu_{ijk}=\mu_{ijk}.$$
Then $\exp 2\pi i (\partial \nu)_{ijkl} = (\partial \mu)_{ijkl}=1$, which implies $(\partial \nu)_{ijkl}=\nu_{jkl}-\nu_{ikl}+\nu_{ijl}-\nu_{ijk} \in \Z$ are locally constant integers on $U_{ijkl}$.
Thus $\delta(A)=[\partial \nu]\in \check H^3(X,\Z)$.
\end{defn}
\begin{defn}
Suppose that $A$ is an Azumaya bundle, that $\mathbf A$ is the Azumaya algebra corresponding to $A$, and that $P$ is the principal $\PU$-bundle associated to $A$. We say $\delta(\mathbf A)= \delta(P)=\delta(A)$ are the Dixmier-Douady class of $\mathbf A$ and $P$ respectively.
\end{defn}
As a consequence of Kuiper's theorem,
\begin{prop}For every cohomology class $\delta$ in $H^3(X,\Z)$, there is a unique $($up to isomorphism$)$ infinite rank Azumaya bundle $($or algebra$)$ with Dixmier-Douady class $\delta$.
\end{prop}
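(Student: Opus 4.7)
The plan is to deduce both existence and uniqueness in one stroke by identifying $\delta$ with the homotopy classification of principal $\PU(H)$-bundles. Since $\PU(H)\cong\Aut(\K(H))$, an infinite rank Azumaya bundle up to isomorphism is the same data as a principal $\PU(H)$-bundle up to isomorphism, and such bundles are classified by the nonabelian \v Cech set $\check H^1(X,\mathscr{PU}(H))$, where $\mathscr{PU}(H)$ denotes the sheaf $U\mapsto C^0(U,\PU(H))$; analogously write $\mathscr U(H)$ for $U\mapsto C^0(U,\U(H))$.

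The short exact sequence of topological groups $1\to \U(1)\to \U(H)\to \PU(H)\to 1$ induces a short exact sequence of sheaves
$$1\longrightarrow \mathscr U(1)\longrightarrow \mathscr U(H)\longrightarrow \mathscr{PU}(H)\longrightarrow 1,$$
and I would observe that the associated connecting map $\check H^1(X,\mathscr{PU}(H))\to \check H^2(X,\mathscr U(1))$ is, by direct inspection, exactly the recipe $[g_{ij}]\mapsto[\tilde g_{ij}\tilde g_{jk}\tilde g_{ki}]$ from the Dixmier-Douady definition above. Composing it with the isomorphism $\partial\colon\check H^2(X,\mathscr U(1))\cong \check H^3(X,\Z)$ already established yields precisely $\delta$.

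The key analytic input is Kuiper's theorem, which says $\U(H)$ is contractible. Consequently $B\U(H)$ is contractible, and the fibration $B\U(1)\to B\U(H)\to B\PU(H)$ yields $B\PU(H)\simeq K(\Z,3)$, whence
$$\{\text{principal }\PU(H)\text{-bundles on }X\}/\mathrm{iso}\;=\;[X,B\PU(H)]\;=\;H^3(X,\Z),$$
and this bijection agrees with $\delta$ by naturality of the connecting map. Surjectivity of $\delta$ gives existence: any $2$-cocycle $\mu$ representing $\delta$ lifts to $\tilde g_{ij}\in\mathscr U(H)$ by contractibility of $\U(H)$, and $\Ad\tilde g_{ij}$ is the desired $\PU(H)$-cocycle. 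Injectivity gives uniqueness: two $\PU(H)$-cocycles with the same image in $\check H^2(X,\mathscr U(1))$ differ, after choosing compatible lifts, by a $\mathscr U(H)$-coboundary, which is trivialisable.

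The main obstacle is handling the nonabelian \v Cech cohomology carefully at degrees one and two, since $\mathscr U(H)$ and $\mathscr{PU}(H)$ are nonabelian sheaves; the classifying-space route via $B\PU(H)\simeq K(\Z,3)$ is the cleanest way to sidestep this technicality and obtain both halves of the statement simultaneously.
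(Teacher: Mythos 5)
Your proof is correct and takes essentially the same route as the paper, which states the proposition without detail as a consequence of Kuiper's theorem: you have simply spelled out the standard classifying-space argument ($\U(H)$ contractible, hence $B\PU(H)\simeq K(\Z,3)$, hence principal $\PU(H)$-bundles are classified by $H^3(X,\Z)$) and matched the $\check H^1\!\to\!\check H^2$ connecting map with the Dixmier-Douady construction, exactly the content the paper leaves implicit.
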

\begin{prop}
Let $A$ be an Azumaya bundle. If $\delta(A)=0$, then one can choose $\tilde g_{ij}$ so that $\tilde g_{ij}$ are the transition functions of a certain Hermitian bundle $E$ over $X$, and $A$ is isomorphic to $\K(E)$, the bundle over $X$ with fibres $\K(E_x)$ for all $x\in X$.
\end{prop}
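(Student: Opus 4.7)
The plan is to exploit the isomorphism $\partial:\check H^2(X,\mathscr U(1))\xrightarrow{\cong}\check H^3(X,\Z)$ that was set up in the construction of the Dixmier-Douady class. Since $\delta(A)=[\partial\nu]=0$, the cocycle class $[\mu]$ already vanishes in $\check H^2(X,\mathscr U(1))$; refining the good covering if necessary, this means there exist $\lambda_{ij}\in C^0(U_{ij},\U(1))$ with $\lambda_{ij}=\lambda_{ji}^{-1}$ such that
$$\mu_{ijk}=\lambda_{ij}\lambda_{jk}\lambda_{ki}\qquad\text{on }U_{ijk}.$$

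First I would replace the chosen lifts by $\tilde g_{ij}':=\lambda_{ij}^{-1}\tilde g_{ij}$. Because each $\lambda_{ij}$ is scalar, $\Ad\tilde g_{ij}'=\Ad\tilde g_{ij}=g_{ij}$ and $\tilde g_{ij}'=(\tilde g_{ji}')^{-1}$. The point of the modification is the direct computation
$$\tilde g_{ij}'\tilde g_{jk}'\tilde g_{ki}'=\lambda_{ij}^{-1}\lambda_{jk}^{-1}\lambda_{ki}^{-1}\,\tilde g_{ij}\tilde g_{jk}\tilde g_{ki}=\lambda_{ij}^{-1}\lambda_{jk}^{-1}\lambda_{ki}^{-1}\mu_{ijk}=1,$$
so $\{\tilde g_{ij}'\}$ is now a genuine $\U_n$-valued \v Cech $1$-cocycle lifting $\{g_{ij}\}$.

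Next, I would use $\{\tilde g_{ij}'\}$ as transition data to build the associated Hermitian bundle $E$: for finite $n$, glue trivial bundles $U_i\times\C^n$ via the standard action of $\U(n)$ on $\C^n$ to obtain a rank-$n$ Hermitian vector bundle; for $n=\infty$, the same construction yields a Hilbert bundle $E$ with fibre $H$. The Azumaya bundle $A$ has transition functions $g_{ij}=\Ad\tilde g_{ij}'$, which are exactly the transition functions of $\K(E)$ (equal to $\End(E)$ in the finite-rank case and $\K(E_x)$ on each fibre in the infinite-rank case), because conjugation by $\tilde g_{ij}'$ on $\M_n$ agrees with the induced action on $\K(E)$. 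Hence $A\cong\K(E)$ as bundles of $C^*$-algebras.

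The main obstacle is bookkeeping rather than mathematics: one must match the sign convention in the coboundary formula $\mu=\partial\lambda$ to the convention used for $\mu_{ijk}=\tilde g_{ij}\tilde g_{jk}\tilde g_{ki}$ so that $\{\tilde g_{ij}'\}$ simultaneously satisfies the skew-symmetry normalization and the strict cocycle identity; once that is aligned, the associated bundle construction is standard. A small additional point worth recording is that the hypothesis $\delta(A)=0$ is used only through the vanishing of $[\mu]$, which is why the argument works uniformly for finite and infinite $n$ (Kuiper's theorem is never needed to lift the cocycle, only to know that the resulting Hilbert bundle $E$ is automatically trivial when $n=\infty$).
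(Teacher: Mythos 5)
The paper states this proposition without proof, so there is no in-text argument to compare against; your proof is correct and is the standard argument. Since the connecting map $\partial:\check H^2(X,\mathscr U(1))\to\check H^3(X,\Z)$ from the long exact sequence is an isomorphism, $\delta(A)=0$ forces $[\mu]=0$; picking a (skew-symmetrically normalized) $\U(1)$-valued $1$-cochain $\lambda$ with $\partial\lambda=\mu$ and replacing $\tilde g_{ij}$ by $\lambda_{ij}^{-1}\tilde g_{ij}$ produces a genuine $\U_n$-cocycle lifting $g_{ij}$, and conjugation by the resulting Hermitian bundle $E$ realizes the transition data of $A$, giving $A\cong\K(E)$ exactly as you say.
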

\begin{cor}An Azumaya algebra $\mathbf A$ over $X$ is Morita equivalent to $C^0(X)$ if and only if $\delta(\mathbf A)=0$.
\end{cor}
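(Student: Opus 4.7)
The plan is to prove both implications separately: the ``if'' direction by invoking the preceding proposition to produce a Hermitian bundle that explicitly implements the Morita equivalence, and the ``only if'' direction by reconstructing such a bundle from a given imprimitivity bimodule and reading off the vanishing of $\delta(\mathbf A)$ from the unitarity of its transition cocycle. The skeleton is already contained in the previously stated slogan that Morita classes of Azumaya algebras are classified by the Dixmier--Douady class, but I would unpack it concretely here.

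For the forward direction, suppose $\delta(\mathbf A)=0$. The preceding proposition yields a Hermitian bundle $E\to X$ with $A\cong \K(E)$ and hence $\mathbf A\cong \Gamma^0(\K(E))$. I would take $\E:=\Gamma^0(E)$, equipped with its pointwise Hermitian structure, as a $C^0(X)$-$\mathbf A$ bimodule, and verify that the two $C^*$-valued inner products
\[
\langle\xi,\eta\rangle_{C^0(X)}(x)=\langle\xi(x),\eta(x)\rangle_{E_x}, \qquad {}_{\mathbf A}\langle\xi,\eta\rangle(x)=|\xi(x)\rangle\langle\eta(x)|
\]
are full and satisfy the usual associativity identity, so that $\E$ is an imprimitivity bimodule yielding a strong Morita equivalence $C^0(X)\sim \mathbf A$.

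For the converse, assume there exists an imprimitivity $C^0(X)$-$\mathbf A$ bimodule $\E$. The commutative action of $C^0(X)$ on $\E$ turns it into a continuous field over $X$. Combining the local Morita triviality of $\mathbf A$ (the preceding Fact) with the standard classification of full Hilbert modules over $\M_n$ identifies each fibre with a Hermitian space (for finite $n$) or a separable Hilbert space (for $n=\infty$). Patching these local pieces together produces a genuine Hermitian or Hilbert bundle $E$ with $\E\cong \Gamma^0(E)$ and $A\cong \K(E)$. The transition functions $\tilde g_{ij}$ of $E$ then form a strict $\U_n$-cocycle $\tilde g_{ij}\tilde g_{jk}\tilde g_{ki}=1$, so the cocycle $\mu_{ijk}$ in the construction of the Dixmier--Douady class is identically $1$, giving $\delta(\mathbf A)=0$.

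The main obstacle is the converse direction, specifically the globalisation step: showing that a general imprimitivity bimodule with commutative left coefficient actually arises as the continuous sections of a locally trivial Hilbert bundle. This requires interlocking the local trivialisations of $A$ with those of $\E$, and in the infinite rank case invoking Kuiper's contractibility of $\U(H)$ to reduce the structure group to $\U_n$ so that a genuine bundle $E$ can be assembled. Once this reconstruction is in place, the vanishing of $\delta(\mathbf A)$ drops out of the strict cocycle identity for the $\tilde g_{ij}$.
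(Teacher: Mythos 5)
The paper states this corollary without proof, treating it as an immediate consequence of the preceding proposition (``if $\delta(A)=0$ then one can choose $\tilde g_{ij}$ to be the transition functions of a Hermitian bundle $E$ with $A\cong\K(E)$''), and your proposal fills in exactly the argument that is being left implicit. Your forward direction is the intended reading: $A\cong\K(E)$ gives $\Gamma^0(E)$ as an imprimitivity $C^0(X)$-$\mathbf A$-bimodule. Your converse is the standard complementary argument: an equivalence bimodule over $C^0(X)$ is a continuous field of Hilbert spaces, local Morita triviality of $\mathbf A$ produces compatible local trivialisations, and the resulting honest $\U_n$-cocycle $\tilde g_{ij}$ forces $\mu_{ijk}\equiv 1$, hence $\delta(\mathbf A)=0$.

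One small correction on how you describe the converse obstacle: the issue is not so much ``reducing the structure group to $\U_n$'' via Kuiper --- once you have a locally trivial bundle the structure group is $\U_n$ by construction. The genuine subtlety in the infinite-rank case is establishing \emph{local triviality} of the continuous field coming from the bimodule in the first place (a Dixmier--Douady-type fact: a continuous field of full Hilbert $\K(H)$-modules over a paracompact base is locally trivial, thanks to Kuiper/stabilization). You have identified the right soft spot, just with a slightly misplaced label. Also, strictly speaking one should note that a Morita equivalence between $C^0(X)$ and $\mathbf A$ over $X$ (i.e.\ respecting the $C^0(X)$-algebra structure) is what is meant; an arbitrary $C^*$-Morita equivalence could a priori twist by a homeomorphism of $X$, which the argument silently factors out. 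Neither point affects the validity of your overall plan.
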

\begin{cor}Two Azumaya algebras $\mathbf A_1$ and $\mathbf A_2$ over $X$ are Morita equivalent if and only if $\delta(\mathbf A_1)=\delta(\mathbf A_2)$. Namely, Morita equivalence classes of Azumaya algebras are parameterized by $H^3(X,\Z)$.
\end{cor}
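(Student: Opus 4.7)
My approach is to bootstrap from the previous corollary by showing that Morita equivalence classes of Azumaya algebras form a group under the fibrewise tensor product, with the Dixmier--Douady class a group homomorphism to $H^3(X,\Z)$. Then $\mathbf A_1$ and $\mathbf A_2$ are Morita equivalent if and only if $\mathbf A_1\otimes_{C^0(X)}\mathbf A_2^{\mathrm{op}}$ is Morita trivial, which reduces both directions to the previous corollary.

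For the forward direction, I work directly with bundle gerbes. If $\mathbf A_1$ and $\mathbf A_2$ are Morita equivalent via some imprimitivity bimodule $\mathcal E$, then on a good cover $\{U_i\}$ local trivializations of $\mathcal E$ intertwine the local trivializations of $A_1$ and $A_2$, making the lifts $\tilde g_{ij}^{(1)}$ and $\tilde g_{ij}^{(2)}$ differ by unitaries whose triple product telescopes. The cocycles $\mu_{ijk}^{(1)}$ and $\mu_{ijk}^{(2)}$ are therefore cohomologous in $\check H^2(X,\mathscr U(1))$, so $\delta(\mathbf A_1)=\delta(\mathbf A_2)$.

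For the converse I need two auxiliary computations. First, additivity: $\delta(\mathbf A_1\otimes_{C^0(X)}\mathbf A_2)=\delta(\mathbf A_1)+\delta(\mathbf A_2)$. This is immediate from the \v Cech construction, since $\tilde g_{ij}^{(1)}\otimes \tilde g_{ij}^{(2)}$ lifts the transition functions of $A_1\otimes A_2$ and the associated $\U(1)$-cocycle is the pointwise product $\mu_{ijk}^{(1)}\mu_{ijk}^{(2)}$. Second, $\delta(\mathbf A^{\mathrm{op}})=-\delta(\mathbf A)$, which follows by choosing $(\tilde g_{ij})^{-1}$ as lifts for the opposite bundle, inverting the cocycle. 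Combining these, when $\delta(\mathbf A_1)=\delta(\mathbf A_2)$ the algebra $\mathbf A_1\otimes_{C^0(X)}\mathbf A_2^{\mathrm{op}}$ has vanishing Dixmier--Douady class, so by the previous corollary it is Morita equivalent to $C^0(X)$ via some bimodule $\mathcal E$. Viewing $\mathcal E$ with its natural $\mathbf A_1\otimes\mathbf A_2^{\mathrm{op}}$-action as an $(\mathbf A_1,\mathbf A_2)$-bimodule yields the desired imprimitivity bimodule.

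Finally, the parameterization statement requires surjectivity of $\delta$, which is already supplied by the earlier proposition guaranteeing that every class in $H^3(X,\Z)$ is realized by a (unique) infinite rank Azumaya bundle. The main technical obstacle I anticipate is giving a careful definition of $\mathbf A_1\otimes_{C^0(X)}\mathbf A_2^{\mathrm{op}}$ as a $C^*$-algebra when one or both factors has infinite rank, so that fibrewise $\K(H_1)\otimes \K(H_2)\cong \K(H_1\otimes H_2)$, and verifying that the result is again Azumaya in our sense; this boils down to picking the correct completion of the fibrewise algebraic tensor product and checking the structure group reduces to $\PU_\infty$.
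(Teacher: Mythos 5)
Your overall strategy — reducing to the previous corollary via the class of $\mathbf A_1\otimes_{C^0(X)}\mathbf A_2^{\mathrm{op}}$, using additivity and the opposite relation — is a valid Brauer-group argument, and the forward direction via local intertwiners and the additivity/opposite computations are all sound (indeed the latter two appear in the paper as their own propositions later in the section). However, there is a genuine gap in your converse step. You write that once $\mathbf A_1\otimes_{C^0(X)}\mathbf A_2^{\mathrm{op}}\sim C^0(X)$ via some imprimitivity bimodule $\mathcal E$, simply \emph{viewing} $\mathcal E$ as an $(\mathbf A_1,\mathbf A_2)$-bimodule produces the desired equivalence. It does not: a $\mathbf A_1\otimes\mathbf A_2^{\mathrm{op}}$-$C^0(X)$ imprimitivity bimodule comes equipped only with a $C^0(X)$-valued inner product and a $\mathbf A_1\otimes\mathbf A_2^{\mathrm{op}}$-valued one, and you would need to construct a full $\mathbf A_2$-valued inner product on $\mathcal E$ satisfying the compatibility axioms, which is exactly where the content of the Azumaya identity $\mathbf A\otimes_{C^0(X)}\mathbf A^{\mathrm{op}}\cong\End_{C^0(X)}(\mathbf A)$ enters. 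To repair the argument without changing strategy, argue instead as follows: since $\mathbf A_2\otimes\mathbf A_2^{\mathrm{op}}$ has vanishing Dixmier--Douady class, it is Morita equivalent to $C^0(X)$, and since tensoring a Morita equivalence by a fixed $C^0(X)$-algebra preserves Morita equivalence, $\mathbf A_1 \cong \mathbf A_1\otimes C^0(X) \sim \mathbf A_1\otimes(\mathbf A_2^{\mathrm{op}}\otimes\mathbf A_2) \cong (\mathbf A_1\otimes\mathbf A_2^{\mathrm{op}})\otimes\mathbf A_2 \sim C^0(X)\otimes\mathbf A_2 \cong \mathbf A_2$.

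It is also worth noting that the paper's placement of this corollary — immediately after the uniqueness of infinite rank Azumaya bundles with a given Dixmier--Douady class and \emph{before} the additivity/opposite propositions — suggests a different, arguably cleaner route: stabilize both algebras by $\K(H)$, observe that $\delta(\mathbf A_i\otimes\K(H))=\delta(\mathbf A_i)$ because $\K(H)$ is globally trivializable, invoke the uniqueness proposition to conclude that $\mathbf A_1\otimes\K(H)\cong\mathbf A_2\otimes\K(H)$ when the classes agree, and then appeal to stable isomorphism implying Morita equivalence (and its Brown--Green--Rieffel converse for the forward direction). This avoids the Azumaya identity and the completion issues for fibrewise tensor products of infinite rank bundles that you correctly flag as a technical obstacle for your approach; the tensor product with a trivial $\K(H)$-bundle is much tamer than a general fibrewise tensor of two $\K(H)$-bundles.
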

As a consequence of \textbf{Fact \ref{torsion}},
\begin{prop}If $A$ is an Azumaya bundle of finite rank $n$, then $n\delta(A)=0$.
\end{prop}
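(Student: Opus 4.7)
The idea is to use Fact \ref{torsion} to refine the local lifts $\tilde g_{ij}$ of the transition functions so that the resulting $\U(1)$-cocycle $\mu_{ijk}$ from the Dixmier--Douady construction actually takes values in the finite subgroup of $n$-th roots of unity. Once this is achieved, the integer cocycle $\partial\nu$ representing $\delta(A)$ becomes $n$-torsion essentially by inspection.

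First I would fix a good cover $\{U_i\}$ of $X$ as in the definition of $\delta(A)$, so that each $U_{ij}$ is contractible. By Fact \ref{torsion} the structure group $\PU_n$ is identified with $\mathrm{SU}(n)/\{z\in\C \mid z^n=1\}$, and the quotient map $\mathrm{SU}(n)\to \PU_n$ is a finite covering. Since $U_{ij}$ is contractible, each transition function $g_{ij}\colon U_{ij}\to \PU_n$ lifts to a continuous map $\tilde g_{ij}\colon U_{ij}\to \mathrm{SU}(n)\subset \U_n$, and by replacing $\tilde g_{ji}$ by $\tilde g_{ij}^{-1}$ we can maintain the normalization $\tilde g_{ji}=\tilde g_{ij}^{-1}$ required in the construction. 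Thus, compared to the original construction, the only new input is that the lifts may be taken inside $\mathrm{SU}(n)$ rather than in all of $\U_n$.

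Next, with this choice of lifts, $\mu_{ijk}=\tilde g_{ij}\tilde g_{jk}\tilde g_{ki}$ lies in the kernel of $\Ad\colon \mathrm{SU}(n)\to \PU_n$, which by Fact \ref{torsion} is precisely the cyclic group $\{z\in \C\mid z^n=1\}$ of $n$-th roots of unity sitting inside $\U(1)$. Consequently, when we choose $\nu_{ijk}\in C^0(U_{ijk},\R)$ with $\exp 2\pi i\,\nu_{ijk}=\mu_{ijk}$, we may take $\nu_{ijk}\in \tfrac{1}{n}\Z$, so that $n\nu_{ijk}\in \Z$ is already an integer-valued (locally constant) \v Cech $2$-cochain.

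Finally, since $\delta(A)=[\partial\nu]\in \check H^3(X,\Z)$, we have
$$n\,\delta(A)=[n\,\partial\nu]=[\partial(n\nu)]=0,$$
because $n\nu$ is itself an integer cochain, making $n\,\partial\nu$ a coboundary in the integer \v Cech complex. The only non-trivial point is the lift into $\mathrm{SU}(n)$, which is the content of Fact \ref{torsion} combined with contractibility of the double intersections; everything after that is just unwinding the definition of $\delta(A)$.
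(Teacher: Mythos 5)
Your proof is correct and is exactly the argument the paper intends when it states this proposition as an immediate consequence of Fact~\ref{torsion}: lift the transition functions into $\mathrm{SU}(n)$ over contractible overlaps using the covering $\mathrm{SU}(n)\to\PU_n$, observe that $\mu_{ijk}$ then lands in the $n$-th roots of unity, and unwind the Dixmier--Douady construction to see that $n\nu$ is already an integer cochain. The paper leaves these details to the reader, and you have filled them in accurately.
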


For example, suppose $X$ is a $2m$-dimensional smooth manifold. The Clifford bundle $\Cl(T^*X)$ is an Azumaya bundle of rank $2^m$. Its Dixmier-Douady class $\delta(\Cl(T^*X))=W_3(X)$ is the third integral Stiefel-Whitney class of $X$, and $2W_3(X)=0$.
\begin{prop}If $A_1$ and $A_2$ are two Azumaya bundles over $X$, then $$\delta(A_1\otimes A_2)=\delta(A_1)+\delta(A_2).$$
\end{prop}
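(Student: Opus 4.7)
My plan is to work directly with the \v Cech cocycle recipe for $\delta$ from the definition and transport it through the tensor product. First I would fix a good open cover $\{U_i\}_{i\in I}$ of $X$ fine enough to trivialize both $A_1$ and $A_2$ simultaneously, with local trivializations $\psi^{(s)}_i:U_i\times \M_{n_s}\xrightarrow{\simeq} A_s|_{U_i}$ and resulting transition functions $g^{(s)}_{ij}\in C^0(U_{ij},\Aut(\M_{n_s}))$ for $s=1,2$. Using the canonical identification $\M_{n_1}\otimes \M_{n_2}\cong \M_{n_1 n_2}$ (interpreted as $\K(H_1)\otimes \K(H_2)\cong \K(H_1\otimes H_2)$ when one of the ranks is infinite), the maps $\psi^{(1)}_i\otimes \psi^{(2)}_i$ supply a local trivialization of $A_1\otimes A_2$ whose transition functions are $g^\otimes_{ij}=g^{(1)}_{ij}\otimes g^{(2)}_{ij}$.

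Next I would pick unitary lifts $\tilde g^{(s)}_{ij}\in C^0(U_{ij},\U_{n_s})$ with $\Ad\tilde g^{(s)}_{ij}=g^{(s)}_{ij}$ and $\tilde g^{(s)}_{ij}=(\tilde g^{(s)}_{ji})^{-1}$. Since $\Ad(u\otimes v)=\Ad u\otimes \Ad v$, the element $\tilde g^\otimes_{ij}:=\tilde g^{(1)}_{ij}\otimes \tilde g^{(2)}_{ij}$ is a legitimate unitary lift of $g^\otimes_{ij}$, so it may be used in the construction of $\delta(A_1\otimes A_2)$. The associated bundle gerbe $2$-cocycle then factors as
$$\mu^\otimes_{ijk}=\tilde g^\otimes_{ij}\tilde g^\otimes_{jk}\tilde g^\otimes_{ki}=(\tilde g^{(1)}_{ij}\tilde g^{(1)}_{jk}\tilde g^{(1)}_{ki})\otimes (\tilde g^{(2)}_{ij}\tilde g^{(2)}_{jk}\tilde g^{(2)}_{ki})=\mu^{(1)}_{ijk}\otimes \mu^{(2)}_{ijk},$$
and because each $\mu^{(s)}_{ijk}$ is a scalar in $\U(1)$, the tensor product collapses to the scalar product $\mu^{(1)}_{ijk}\mu^{(2)}_{ijk}$. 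Hence $\mu^\otimes=\mu^{(1)}\cdot\mu^{(2)}$ as \v Cech $2$-cocycles valued in $\mathscr U(1)$.

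To finish, I would choose real lifts $\nu^{(s)}_{ijk}\in C^0(U_{ijk},\R)$ with $\exp 2\pi i\,\nu^{(s)}=\mu^{(s)}$; then $\nu^\otimes_{ijk}:=\nu^{(1)}_{ijk}+\nu^{(2)}_{ijk}$ lifts $\mu^\otimes$, and $\partial\nu^\otimes=\partial\nu^{(1)}+\partial\nu^{(2)}$ as integer-valued $3$-cocycles. Passing to cohomology gives $\delta(A_1\otimes A_2)=\delta(A_1)+\delta(A_2)$. The only genuinely delicate point, and the one I would expect to be the main obstacle, is the bookkeeping underlying the first paragraph: one must verify that the tensor product of principal $\PU_{n_1}$- and $\PU_{n_2}$-bundles is modeled by the spatial tensor product of fibre algebras so that transition cocycles tensor as claimed, and that $\Ad(u\otimes v)=\Ad u\otimes \Ad v$ extends to the infinite-rank setting via the homomorphism $\U(H_1)\times \U(H_2)\to \U(H_1\otimes H_2)$. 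Once these identifications are pinned down, everything that follows is formal manipulation at the \v Cech level.
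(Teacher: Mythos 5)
The paper states this proposition without proof, as part of its review of standard facts about Azumaya bundles, so there is no argument in the text to compare against. Your \v Cech-level argument is correct and is the canonical one: trivialize both bundles on a common good cover, observe that $\psi^{(1)}_i\otimes\psi^{(2)}_i$ trivializes $A_1\otimes A_2$ with transition functions $g^{(1)}_{ij}\otimes g^{(2)}_{ij}$, take tensor products of unitary lifts, and use centrality of $\U(1)$ to see the bundle-gerbe cocycles multiply, $\mu^\otimes = \mu^{(1)}\mu^{(2)}$, so the $\Z$-valued $3$-cocycles $\partial\nu$ add. Your flagged delicate point is the right one to flag and is fine: $\Ad$ is multiplicative under the homomorphism $\U_{n_1}\times\U_{n_2}\to\U_{n_1 n_2}$ (respectively $\U(H_1)\times\U(H_2)\to\U(H_1\otimes H_2)$), which descends to $\PU_{n_1}\times\PU_{n_2}\to\PU_{n_1 n_2}$ and identifies the structure group of $A_1\otimes A_2$. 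One small thing worth saying explicitly, since the definition in the paper imposes $\tilde g_{ij}=\tilde g_{ji}^{-1}$ on the lifts: your choice $\tilde g^{\otimes}_{ij}=\tilde g^{(1)}_{ij}\otimes\tilde g^{(2)}_{ij}$ inherits this antisymmetry from the factors, so it is an admissible lift in the sense of the definition. With that noted, the proof is complete.
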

\begin{prop}If $\mathbf A$ is an Azumaya algebra, then its opposite algebra $\mathbf A^{\mathrm{op}}$ is also an Azumaya algebra and $$\delta(\mathbf A^{\mathrm{op}})=-\delta(\mathbf A).$$
\end{prop}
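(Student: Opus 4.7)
The plan is to run through the \v Cech-theoretic definition of the Dixmier-Douady class recalled in the excerpt and track how the cocycle data is modified by the passage to the opposite algebra. The first step is to verify that $\mathbf A^{\mathrm{op}}$ really is Azumaya. The key local observation is that the matrix transpose $T:\M_n\to\M_n$, $a\mapsto a^T$, is an anti-automorphism, hence provides an algebra isomorphism $\M_n^{\mathrm{op}}\xrightarrow{\cong}\M_n$. (For $n=\infty$ one fixes an orthonormal basis and uses the corresponding transpose on $\K(H)$.) Composing the local trivializations $\psi_i:U_i\times\M_n\xrightarrow{\cong}\pi^{-1}(U_i)$ of $A$ fibrewise with $T$ produces local trivializations of an Azumaya bundle with fibre $\M_n$ whose section algebra is $\mathbf A^{\mathrm{op}}$.

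Next I would read off the new transition functions. Using $(uau^{-1})^T = u^{-T}\,a^T\,u^T$, one computes that in the transposed trivializations the cocycle becomes $g_{ij}^{\mathrm{op}}=\Ad(\tilde g_{ij}^{-T})$. Since each $\tilde g_{ij}$ is unitary, the element $\tilde g_{ij}^{-T}=\overline{\tilde g_{ij}}$ is again unitary and provides a valid lift. With $\tilde h_{ij}:=\overline{\tilde g_{ij}}$ the corresponding bundle-gerbe cocycle is
$$\mu_{ijk}^{\mathrm{op}} \;=\; \tilde h_{ij}\tilde h_{jk}\tilde h_{ki} \;=\; \overline{\tilde g_{ij}\tilde g_{jk}\tilde g_{ki}} \;=\; \overline{\mu_{ijk}} \;=\; \mu_{ijk}^{-1},$$
since $\mu_{ijk}\in\U(1)$. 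Choosing the real lifts $\nu_{ijk}^{\mathrm{op}}=-\nu_{ijk}$ then yields $\partial\nu^{\mathrm{op}}=-\partial\nu$, and hence $\delta(\mathbf A^{\mathrm{op}})=[\partial\nu^{\mathrm{op}}]=-[\partial\nu]=-\delta(\mathbf A)$.

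The only subtle point I anticipate is the case $n=\infty$: the transpose on $\K(H)$ depends on the choice of orthonormal basis, so one must check that this choice does not affect $[\mu^{\mathrm{op}}]$. But two such transposes differ by conjugation with a unitary on $H$, which only modifies the $\tilde g_{ij}$ by a \v Cech coboundary in $\mathscr U(1)$ and hence leaves the cohomology class of $\mu^{\mathrm{op}}$ unchanged. If one prefers to avoid the fibrewise transpose altogether, an alternative route is to combine the additivity statement $\delta(A_1\otimes A_2)=\delta(A_1)+\delta(A_2)$ proved above with the natural $C^0(X)$-algebra isomorphism $\mathbf A\otimes\mathbf A^{\mathrm{op}}\cong\K(\mathbf A)$ given by left-right multiplication, whose Dixmier-Douady class vanishes because $\mathbf A$ is itself a Hilbert $C^0(X)$-module; this forces $\delta(\mathbf A^{\mathrm{op}})=-\delta(\mathbf A)$ directly once $\mathbf A^{\mathrm{op}}$ is known to be Azumaya.
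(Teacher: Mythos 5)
The paper states this proposition without proof, implicitly treating it as a standard fact about Azumaya algebras, so there is no internal argument to compare your proposal against; I will assess it on its own terms.

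Your \v Cech-level argument is correct and complete. The key computations all check out: composing local trivializations with the fibrewise transpose $T$ produces trivializations of $\mathbf A^{\mathrm{op}}$ with transition cocycle $\Ad(\tilde g_{ij}^{-T})$; unitarity of $\tilde g_{ij}$ gives $\tilde g_{ij}^{-T}=\overline{\tilde g_{ij}}$, hence the unitary lifts for the opposite bundle can be taken to be the entrywise conjugates (which also satisfy the symmetry normalization $\tilde h_{ij}=\tilde h_{ji}^{-1}$ required by the paper's definition); the bundle-gerbe cocycle becomes $\overline{\mu_{ijk}}=\mu_{ijk}^{-1}$; and the real lifts $-\nu_{ijk}$ force $\delta(\mathbf A^{\mathrm{op}})=-\delta(\mathbf A)$. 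Your remark about the basis-dependence of the transpose for $n=\infty$ is the right subtlety to flag, and your resolution (two transposes differ by an inner automorphism $\Ad(u)$, which changes all trivializations simultaneously by conjugation and hence perturbs $\{\tilde g_{ij}\}$ only by a coboundary) is sound. The alternative route via $\delta(A_1\otimes A_2)=\delta(A_1)+\delta(A_2)$ together with $\mathbf A\otimes_{C^0(X)}\mathbf A^{\mathrm{op}}\cong\End_{C^0(X)}(\mathbf A)$ (which is Morita trivial) is the classical structural proof and is worth keeping as a sanity check, though one should note the two arguments prove slightly different things: the cocycle computation produces an explicit cocycle-level negation, while the tensor-product argument only establishes the cohomological identity after one has separately verified that $\mathbf A^{\mathrm{op}}$ is Azumaya with the same local model.
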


Let $\delta$ be a cohomology class in $H^3(X,\Z)$. Recall that (Rosenberg \cite{Rosenberg}, Atiyah-Segal \cite{AtiyahSegal}) the \definition{twisted K-theory} $K^0(X,\delta)$ can be defined by
$$K^0(X,\delta) =[P\to \mathrm{Fred}(H)]_{\PU(H)},$$
the abelian group of homotopy classes of maps $P\to \mathrm{Fred}(H)$ that are equivariant under the natural action of $\PU(H)$, where
$P$ is a principal $\PU(H)$-bundle over $X$ with Dixmier-Douady class $\delta(P)=\delta$; and where $\mathrm{Fred}(H)$ is the space of
Fredholm operators on $H$. Twisted K-theory can also be defined with K-theory of a C$^*$-algebra:
$$K^0(X,\delta)=K_0(\mathbf A),$$
where $\mathbf A$ is an (infinite rank) Azumaya algebra over $X$ with Dixmier-Douady class $\delta(\mathbf A)=\delta$.
One can also define the twisted $K^1$-group by $K^1(X,\delta)=K_1(\mathbf A)$.
The above two definitions of twisted K-theory are equivalent (Rosenberg
\cite{Rosenberg}). We will always use the second definition in this paper.

\begin{prop}The direct sum of twisted K-groups of $X$ $$\bigoplus_{\delta\in H^3(X,\Z)} K^\bullet(X,\delta)$$ forms a $\Z_2\times H^3(X,\Z)$-bigraded ring. The product $K^i(X,\delta_1)\times K^j(X,\delta_2)\to K^{i+j}(X,\delta_1+\delta_2)$ is naturally defined.
\end{prop}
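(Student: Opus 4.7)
The plan is to build the product at the level of Azumaya algebras and then transport it through the identification $K^\bullet(X,\delta)=K_\bullet(\mathbf A_\delta)$, where $\mathbf A_\delta$ is any (infinite-rank) Azumaya algebra with Dixmier-Douady class $\delta$. Since such an $\mathbf A_\delta$ is unique up to Morita equivalence by the earlier corollary, and Morita equivalence induces canonical isomorphisms on K-theory, the groups $K^i(X,\delta)$ are intrinsically attached to the class $\delta$.

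First I would fix, for each $\delta\in H^3(X,\Z)$, an Azumaya algebra $\mathbf A_\delta$ of the form $\Gamma^0(A_\delta)$. Given $\delta_1,\delta_2$, I would form the fibrewise tensor product $\mathbf A_{\delta_1}\otimes_{C^0(X)}\mathbf A_{\delta_2}\cong \Gamma^0(A_{\delta_1}\otimes A_{\delta_2})$, which is itself an Azumaya algebra. By the proposition $\delta(A_1\otimes A_2)=\delta(A_1)+\delta(A_2)$ already recorded in the excerpt, this tensor product has Dixmier-Douady class $\delta_1+\delta_2$, hence is Morita equivalent to $\mathbf A_{\delta_1+\delta_2}$.

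Next I would invoke the standard external product in $C^*$-algebra K-theory, which gives a bilinear, $\Z_2$-graded pairing
\[
K_i(\mathbf A_{\delta_1})\otimes K_j(\mathbf A_{\delta_2})\longrightarrow K_{i+j}\bigl(\mathbf A_{\delta_1}\otimes_{C^0(X)}\mathbf A_{\delta_2}\bigr).
\]
Composing with the Morita isomorphism $K_{i+j}(\mathbf A_{\delta_1}\otimes_{C^0(X)}\mathbf A_{\delta_2})\cong K_{i+j}(\mathbf A_{\delta_1+\delta_2})=K^{i+j}(X,\delta_1+\delta_2)$ then yields the desired product landing in the correct bidegree. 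Associativity, bilinearity and compatibility with the $\Z_2$-grading are inherited from the corresponding properties of the external product in $C^*$-algebra K-theory, so the direct sum over $\delta$ inherits the structure of a $\Z_2\times H^3(X,\Z)$-bigraded ring.

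The step that requires the most care is naturality: the product must not depend on the choice of representative $\mathbf A_\delta$, on the chosen trivializations of the Azumaya bundles, or on the Morita equivalence $\mathbf A_{\delta_1}\otimes_{C^0(X)}\mathbf A_{\delta_2}\sim \mathbf A_{\delta_1+\delta_2}$. For this I would use that any two Azumaya algebras with the same Dixmier-Douady class are linked by a Morita equivalence that is unique up to tensoring with a line bundle, and that tensoring with a line bundle acts trivially on K-theory; therefore the induced isomorphism on $K_\bullet$ is canonical. This, together with the associativity of the fibrewise tensor product (up to a canonical isomorphism of Azumaya algebras), suffices to promote the external product to a well-defined, associative, bigraded multiplication on $\bigoplus_{\delta}K^\bullet(X,\delta)$.
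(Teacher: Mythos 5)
The paper states this proposition without proof, so there is no paper argument to compare against; your overall strategy---build the product on Azumaya algebras via the fibrewise tensor product, use $\delta(A_1\otimes A_2)=\delta(A_1)+\delta(A_2)$, apply the external product in $C^*$-algebra K-theory, and descend along Morita equivalence---is exactly the right skeleton and matches how the definitions are set up in Section~\ref{Azumaya}.

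However, your well-definedness argument contains a genuine error. You assert that ``tensoring with a line bundle acts trivially on K-theory,'' and use this to conclude that the Morita isomorphism $K_\bullet(\mathbf A_{\delta_1}\otimes_{C^0(X)}\mathbf A_{\delta_2})\cong K_\bullet(\mathbf A_{\delta_1+\delta_2})$ is canonical. This is false. If $E$ and $E'$ are two Morita equivalence bimodules between the same pair of Azumaya algebras, they differ by a line bundle $L$, and the two induced isomorphisms on $K_\bullet$ differ by cup product with $[L]\in K^0(X)$, which is the $K^0(X)$-module action on twisted K-theory and is nontrivial whenever $[L]\neq 1$ (for example, $\mathcal O(1)$ on $\C P^1$ acts nontrivially on $K^0(\C P^1)$). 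The same phenomenon occurs at the level of automorphisms of a fixed infinite-rank Azumaya algebra $\mathbf A_\delta$: the bundle automorphisms covering $\mathrm{id}_X$ form, up to homotopy, a copy of $H^2(X,\Z)$ (since $\PU(H)\simeq K(\Z,2)$), and these automorphisms act on $K_\bullet(\mathbf A_\delta)$ precisely by multiplication by the corresponding line bundle classes. This is the standard subtlety in twisted K-theory (see Atiyah--Segal): the group $K^\bullet(X,\delta)$ is only defined up to an action of $H^2(X,\Z)$ unless one fixes an actual $\PU(H)$-bundle or Azumaya algebra, not merely its Dixmier--Douady class.

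To close the gap, you should either (a) interpret the twist as the Azumaya algebra itself (or the $\PU(H)$-bundle), in which case the product $K_i(\mathbf A_1)\otimes K_j(\mathbf A_2)\to K_{i+j}(\mathbf A_1\otimes_{C^0(X)}\mathbf A_2)$ is manifestly canonical and no Morita identifications are needed; or (b) make a coherent choice of representatives $\{\mathbf A_\delta\}_\delta$ together with isomorphisms $\mathbf A_{\delta_1}\otimes_{C^0(X)}\mathbf A_{\delta_2}\cong\mathbf A_{\delta_1+\delta_2}$ satisfying an associativity cocycle condition, and observe that different coherent choices yield isomorphic bigraded rings. Either fix is compatible with the intended reading of ``naturally defined,'' but the step as written---invoking triviality of the line-bundle action---does not hold.
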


\begin{defn} Let $c\in \Omega^3(X)$ be a closed $3$-form, the \definition{twisted de Rham complex} is the following periodic sequence
$$\xrightarrow{d_c} \Omega^{\ev}(X) \xrightarrow{d_c} \Omega^{\odd}(X) \xrightarrow{d_c},$$
where $d_c \omega=d\omega+c\wedge \omega$.
The \definition{twisted de Rham cohomology} is $H_{\dR}^*(X,c)=H^*(\Omega^*(X), d_c)$.
\end{defn}
\begin{prop}
If $c$ is a closed $3$-form, then $H_{\dR}^*(X,c)\cong H_{\dR}^*(X,zc)$ as isomorphic vector spaces for all nonzero $z\in \C$.
\end{prop}
In particular, $H_{\dR}^*(X,c)\cong H_{\dR}^*(X,-c)$ as vector spaces. In fact, in some literatures such as \cite{Mathai-Stevenson}, the twisted coboundary $d_c\omega$ of $\omega$ is defined by $d\omega- c \wedge \omega$.
\begin{prop}
If a closed $3$-form $c_1=c_2+d\beta$ for some $\beta\in \Omega^2(X)$, then
\begin{eqnarray*}
\CD
 @>>> \Omega^{\ev}(X)       @>d_{c_1}>>  \Omega^{\odd}(X) @>>> \\
  & & @VV \wedge \exp \beta V                  @VV \wedge \exp \beta  V    \\
 @>>> \Omega^{\ev}(X)       @>d_{c_2}>>  \Omega^{\odd}(X) @>>>
\endCD
\end{eqnarray*}
is a chain isomorphism. Therefore $H_{\dR}^*(X,c_1)\cong H_{\dR}^*(X,c_2)$ as vector spaces.
\end{prop}

\end{section} 
\begin{section}{Generalized Connes-Hochschild-Kostant-Rosenberg theorem}\label{CHKR}
In this section, we assume that $M$ is a smooth oriented closed manifold, and that $A$ is an Azumaya bundle over $M$ with a smooth structure in the sense that all the transition functions for the vector bundle $A$ are smooth functions valued in the (Banach) Lie group $\PU_n$. Let $\A$ be the space of smooth trace class sections of $A$, then $\A$ is a Fr\'echet pre-C$^*$-algebra densely embedded in $\mathbf A=\Gamma^0(A)$. In particular, if the rank $n$ of $A$ is finite, then $\A =\Gamma(A)$.

Given a $\PU_n$-connection $\nabla:\Omega^k(M,A)\to \Omega^{k+1}(M,A)$ on $A$, the image of the Dixmier-Douady class $\delta(A)$ in $H_{\mathrm{dR}}^3(M,\R)$ can be represented by a differential $3$-form in terms of  the connection and curvature (e.g., Freed \cite{Freed}) as follows:

Let $\{U_i\}$ be a good open covering of $M$, and $\psi_i:U_i\times \M_n \to A|_{U_i}$ be a local trivialization compatible with the smooth structure on $A$. Denote by $g_{ji}\in C^{\infty}(U_{ij}, \PU_n)$ the transition function corresponding to $\psi_j^{-1}\psi_i$. Pick $\tilde{g}_{ji}\in C^{\infty}(U_{ij}, \U_n)$ so that $\Ad\tilde{g}_{ji}=g_{ji}$ and $\tilde{g}_{ji}=\tilde{g}_{ij}^{-1}$. Let $\theta_i$ be the local connection forms of $\nabla$ on $U_i$,
$$\nabla(\psi_i(O))=\psi_i(dO+\theta_i(O)), \forall O\in C^{\infty}(U_i, \M_n).$$
Then $\theta_i=g^{-1}_{ji}\theta_jg_{ji}+g^{-1}_{ji}dg_{ji}$. Pick $\tilde{\theta}_i\in \Omega^1(U_i,\M_n)$ if $n\ne\infty$, or pick
$\tilde{\theta}_i\in \Omega^1(U_i,\mathrm B(H))$ if $n=\infty$, so that $\tilde{\theta}_i$ is skew-Hermitian and $\theta_i=\ad \tilde{\theta}_i$. Thus
$$\tilde{\theta}_i=\tilde{g}^{-1}_{ji}\tilde{\theta}_j\tilde{g}_{ji}+\tilde{g}^{-1}_{ji}d\tilde{g}_{ji}+\alpha_{ij},$$
for some scalar valued $1$-form $\alpha_{ij}\in \Omega^1(U_{ij})$.  Let $\omega_i$ be the local curvature forms of $\Omega=\nabla^2:\Gamma(A)\to\Omega^2(X,A)$ on $U_i$,
$$\Omega(\psi_i(O))=\psi_i(\omega_i(O)), \forall O\in  C^{\infty}(U_i, \M_n).$$
So $\omega_i=d\theta_i+\theta_i\wedge \theta_i$, and $\omega_i=g^{-1}_{ji}\omega_jg_{ji}$. Let $\tilde{\omega}_i=d\tilde{\theta_i}+\tilde{\theta}_i\wedge \tilde{\theta}_i$, then $\ad\tilde{\omega}_i=\omega_i$, and $\tilde{\omega}_i=\tilde g^{-1}_{ji}\tilde{\omega}_j\tilde g_{ji}+d\alpha_{ij}$.
Let $\tilde{\Omega_i}=\psi_i \tilde{\omega}_i \psi_i^{-1}$, then $\tilde{\Omega_i}=\tilde{\Omega_j}+d\alpha_{ij}$. Since $d\alpha_{ij}+d\alpha_{jk}+d\alpha_{ki}=0$, $d\alpha$ forms a $2$-form valued cocycle, and since the sheaf of $2$-forms is fine (or because of the existence of partition of unity on $M$), there exist $\beta_i\in \Omega^2(U_i)$ so that $2\pi i (\beta_i-\beta_j) = d\alpha_{ij}$. We can define a generalized $2$-form by $\tilde{\Omega_i}-2\pi i \beta_i$ on $U_i$, and it is globally well-defined.

\begin{lem}If $A$ is an Azumaya bundle over $M$ with connection $\nabla$ and curvature $\Omega$, then
$-{1 \over 2\pi i}\nabla(\tilde{\Omega_i}-2\pi i \beta_i)$ represents the image of the Dixmier-Douady class $\delta(A)$ in $H^3_{\dR}(M)$.
\end{lem}
\begin{proof}First recall that the \v Cech-de Rham isomorphism between the third de Rham cohomology $H^3_{\dR}(M)$ and  \v Cech cohomology $\check H^3(M,\C)$ with constant coefficient sheaf $\C$ can be constructed as follows.
For any closed $3$-form $\tau\in \Omega^3(M)$, one can find $\beta(\tau)_i\in \Omega^2(U_i)$ so that $d\beta(\tau)_i=\tau|_{U_i}$. Since $d\beta(\tau)_i-d\beta(\tau)_j=0$ one can find $\alpha(\tau)_{ij}\in \Omega^1(U_{ij})$ so that $\beta(\tau)_i-\beta(\tau)_j=d\alpha(\tau)_{ij}$. Since $d\alpha(\tau)_{ij}+d\alpha(\tau)_{jk}+d\alpha(\tau)_{ki}=0$ one can find $\nu(\tau)_{ijk}\in C^{\infty}(U_{ijk})$ so that $(\partial\alpha(\tau))_{ijk}=d\nu(\tau)_{ijk}$ on $U_{ijk}$. Here $\partial$ denotes the coboundary operator on \v{C}ech cocycles. Likewise, since $d(\partial \nu(\tau))_{ijkl}=(\partial d\nu(\tau))_{ijkl}=0$, one can find $\delta(\tau)_{ijkl}\in \C$ so that $(\partial \nu(\tau))_{ijkl}=\delta(\tau)_{ijkl}$. The \v Cech-de Rham isomorphism $H^3_{\dR}(M)\to \check H^3(M,\C)$ is given by the correspondence $\tau\mapsto \delta(\tau)$.

Now let $\tau=-{1\over 2\pi i}\nabla(\tilde{\Omega_i}-2\pi i \beta_i)$. By the Bianchi identity $-{1\over 2\pi i}\nabla(\tilde{\Omega_i}-2\pi i \beta_i)= d\beta_i$. Thus we can choose $\beta(\tau)_i=\beta_i$, $\alpha(\tau)_{ij}=\alpha_{ij}, \nu(\tau)_{ijk}=\nu_{ijk}$, and $\delta(\tau)_{ijkl}=\delta_{ijkl}$. Therefore it follows that $\tau$ represents the image of $\delta(A)$ in $H^3_{\dR}(M)$.
\end{proof}

\begin{thm}\label{sigma}$1)$. If $A$ is a finite rank Azumaya bundle with connection $\nabla$ and curvature $\Omega$, then there is a unique traceless $\sigma(\Omega)\in \Omega^2(M, A)$ such that $\ad\, \sigma(\Omega)=\Omega$ and $\nabla(\sigma(\Omega))=0$.

\noindent $2)$. Suppose that $A$ is an infinite rank Azumaya bundle associated to a principal $\PU(H)$-bundle $P$ with connection $\nabla$ and curvature $\Omega$, and that $c\in \Omega^3(M)$ is a differential form representing the image of the Dixmier-Douady class $\delta(A)$ in $H^3_{\dR}(M)$. Then up to a closed scalar $2$-form, there is a unique $\Gamma(P\times_{\PU(H)}\mathrm B(H))$-valued $2$-form $\sigma(\Omega)$ such that $\ad\, \sigma(\Omega)=\Omega$ and $-{\nabla(\sigma(\Omega))\over 2\pi i}=c$. Here $\PU(H)$ acts on $\mathrm B(H)$ the same way as on $\K(H)$.
\end{thm}
\begin{proof}Up to a scalar valued $2$-form, $\sigma(\Omega)$ can be defined by $\tilde{\Omega_i}-2\pi i \beta_i$ as in the above lemma.
In fact, $\ad (\tilde{\Omega_i}-2\pi i \beta_i)=\ad \tilde{\Omega_i}=\Omega$, and  $-{1 \over 2\pi i}\nabla(\tilde{\Omega_i}-2\pi i \beta_i)$ represents the image of the Dixmier-Douady class $\delta(A)$ in $H^3_{\dR}(M)$.

\noindent 1). If $A$ is of rank $m<\infty$, we just take $\sigma(\Omega)=\tilde{\Omega_i}-2\pi i \beta_i- \tr (\tilde{\Omega_i}-2\pi i \beta_i)/m$. The scalar $3$-form $\nabla(\sigma(\Omega))$ must be $0$ since it is traceless. The uniqueness of $\sigma(\Omega)$ is obvious.

\noindent 2). Suppose $A$ is of infinite rank. There exists a scalar $2$-form $\eta$ such that $$-{1 \over 2\pi i}\nabla(\tilde{\Omega_i}-2\pi i \beta_i)-c =d \eta.$$
We can take $\sigma(\Omega)=\tilde{\Omega_i}-2\pi i \beta_i-2\pi i \eta$.
\end{proof}

Recall that if $\B$ is a pre-C$^*$-algebra densely embedded in a C$^*$-algebra $\mathbf B$, then $K_0(\B)=K^{\text{alg}}_0(\B)$ is naturally isomorphic to $K_0(\mathbf B)$. If $\B$ is a unital Fr\'echet algebra, we define $K_1(\B)$ to be the abelian group of the equivalence classes of $\mathrm{GL}_{\infty}(\B)$. We say that $u,v\in \mathrm{GL}_{\infty}(\B)$ are equivalent if there is a piecewise $C^1$-path in $\mathrm{GL}_{\infty}(\B)$ joining $u$ and $v$. The definition of $K_1(\B)$ can be extended to the case of non-unital algebras so that the six-term exact sequence property holds. For Azumaya algebras, $K_*(\A)$ is naturally isomorphic to $K_*(\mathbf A)=K^*(M, \delta(\mathbf A))$. We refer to \cite{NDG,Connes,Loday} for the definitions of Hochschild, cyclic and periodic cyclic homologies and cohomologies. We denote the Hochschild boundary map by $\bb$, and denote the Connes boundary map by $\BB$.

\begin{defn}Following Gorokhovsky \cite{Gorokhovsky}, define two maps
$$\Chkr: \bigoplus_{k\, \mathrm{even}}\overline{C}^{\mathrm{red}}_k(\A) \to \Omega^{\ev}(M)\quad\text{and}\quad
\Chkr: \bigoplus_{k\, \mathrm{odd}}\overline{C}^{\mathrm{red}}_k(\A) \to \Omega^{\odd}(M)
$$
by the JLO-type (\cite{JLO}) formula
\begin{equation}\label{Gorokhovsky}\Chkr(a_0,a_1,...,a_k)=\int_{\boldsymbol s\in\Delta^k} \tr(a_0 e^{-s_0\sigma(\Omega)}(\nabla a_1)e^{-s_1\sigma(\Omega)}\cdots(\nabla a_k)e^{-s_k\sigma(\Omega)}) d\boldsymbol s.
\end{equation}
Here  $\overline{C}^{\mathrm{red}}_0(\A)=\A$ and $\overline{C}^{\mathrm{red}}_j(\A)=\A^+\hat{\otimes}\A^{\hat{\otimes} j}$, for all $j\ne 0$, with $\A^+$ being the unitalization of $\A$ and $\hat{\otimes}$ the projective tensor product of locally convex topological algebras.
Here and subsequently, if $V=\bigoplus_i V^i$ is a $\Z$-graded vector space, we write $V^{\ev}=\bigoplus_k V^{2k}$ and $V^{\odd}=\bigoplus_k V^{2k+1}$.
\end{defn}

The generalized CHKR theorem of Mathai-Stevenson's is as follows.
\begin{prop}[Mathai-Stevenson \cite{Mathai-Stevenson}]$1)$. The map $\Chkr$ in {\rm(\ref{Gorokhovsky})} induces a quasi-isomorphism between the two complexes $$\xrightarrow{\bb}\overline{C}^{\mathrm{red}}_{\ev}(\A)\xrightarrow{\bb}\overline{C}^{\mathrm{red}}_{\odd}(\A)\xrightarrow{\bb}
\text{ and }
\xrightarrow{0}\Omega^{\ev}(M)\xrightarrow{0}\Omega^{\odd}(M)\xrightarrow{0},
$$
and hence isomorphisms $HH_{\ev}(\A)\cong \Omega^{\ev}(M)$ and $HH_{\odd}(\A)\cong \Omega^{\odd}(M)$.

$2)$. The map $\Chkr$ induces a quasi-isomorphism between the complex
$$\xrightarrow{\bb+\BB}\overline{C}^{\mathrm{red}}_{\ev}(\A)\xrightarrow{\bb+\BB}\overline{C}^{\mathrm{red}}_{\odd}(\A)\xrightarrow{\bb+\BB},
 \text{ or equivalently }
\xrightarrow{\BB}HH_{\ev}(\A)\xrightarrow{\BB}HH_{\odd}(\A)\xrightarrow{\BB}
$$
and the twisted de Rham complex
$$ \xrightarrow{d_c}\Omega^{\ev}(M)\xrightarrow{d_c}\Omega^{\odd}(M)\xrightarrow{d_c};$$
and hence an isomorphism
\begin{equation}\label{Ch}\Chkr: HP_{\bullet}(\A)\xrightarrow{\cong} H_{\dR}^{\bullet}(M,c), \quad ({\bullet}=\ev, \odd),
\end{equation}
 where $c=-{\nabla(\sigma(\Omega))\over 2\pi i}$ is a representative of the image of $\delta(A)$ in $H_{dR}^3(M)$.

$3)$. The Connes-Chern character $\ch:K_{\bullet}(\A)\otimes \C \to HP_{\bullet}(\A)$ and the {\em \bf twisted Chern character}
$$\ch_{\delta(A)}=\Chkr \circ \ch:\, K^{\bullet}(M,\delta(A))\otimes \C \to H_{\dR}^{\bullet}(M,c)$$
are isomorphisms.
\end{prop}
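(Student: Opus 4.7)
The plan is to prove all three parts by reducing to the classical (untwisted) Connes--Hochschild--Kostant--Rosenberg theorem via Morita invariance and a local-to-global argument, and then determining how the curvature of $\nabla$ modifies the induced boundary on $\Omega^*(M)$. For part 1), on a contractible trivializing open $U_i$ the algebra $\A|_{U_i}$ is a Fr\'echet dense subalgebra of $C^\infty(U_i,\M_n)$, which is Morita equivalent to $C^\infty(U_i)$; hence continuous Hochschild homology yields $HH_*(\A|_{U_i})\cong \Omega^*(U_i)$ by Connes' classical theorem. With the local trivialization $\psi_i$ and the choice $\tilde\theta_i=0$, the JLO-type formula (\ref{Gorokhovsky}) collapses, up to the normalized trace on $\M_n$, to Connes' antisymmetrization $a_0\otimes\cdots\otimes a_k\mapsto \tfrac{1}{k!}\,a_0\,da_1\wedge\cdots\wedge da_k$, so locally $\Chkr$ recovers the classical CHKR map. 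Globalization proceeds by first checking that $\Chkr$ is independent of the auxiliary data $(\tilde\theta_i,\beta_i)$ up to $\bb$-exact terms via a standard JLO transgression identity, and then observing that the transitions $\Ad\tilde g_{ij}$ are inner automorphisms of $\A$, which act trivially on Hochschild homology, so the local isomorphisms glue to a global $HH_*(\A)\cong \Omega^*(M)$.

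For part 2), with the Hochschild isomorphism in hand, the main computational task is to show that $\Chkr$ intertwines $\bb+\BB$ with $d_c=d+c\wedge$. Differentiating (\ref{Gorokhovsky}) and using Duhamel's formula to move $\nabla$ past the exponentials $e^{-s_j\sigma(\Omega)}$, together with the identities $\nabla^2=[\sigma(\Omega),\cdot\,]$ and Bianchi $\nabla\sigma(\Omega)=-2\pi i\cdot c$, the $\nabla$-derivatives falling on the $a_j$-factors reassemble (after integration by parts on the simplex $\Delta^k$) into the image of $\bb$, while the commutator $[\sigma(\Omega),\cdot\,]$ contributions, combined with the cyclic symmetry encoded in $\BB$, produce exactly the scalar $c\wedge$ correction. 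Once the chain-map property is established, the quasi-isomorphism on the periodic cyclic complex follows from the Hochschild quasi-isomorphism of part 1) by a standard filtration/spectral-sequence argument on the $(\bb,\BB)$-bicomplex, yielding $HP_*(\A)\cong H_{\dR}^*(M,c)$.

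For part 3), one combines $K_*(\A)=K^*(M,\delta(A))$ (valid for Fr\'echet dense pre-$C^*$-subalgebras) with the fact that the Connes--Chern character $\ch:K_*(\A)\otimes\C\to HP_*(\A)$ is rationally an isomorphism for Azumaya algebras; this last point reduces, via a Mayer--Vietoris induction on a good cover of $M$, to the classical Atiyah--Hirzebruch isomorphism $K^*(U)\otimes\C\cong H_{\dR}^*(U)$ in the trivial case, the gluing being compatible because inner automorphisms act trivially on both $K$-theory and periodic cyclic homology. Post-composing with the isomorphism of part 2) then gives the stated $\ch_c$. The main obstacle throughout is the chain-map verification in part 2): the object $\sigma(\Omega)$ is only locally defined (it is an honest trace-class form only up to a scalar $2$-form), and one must carefully track how the non-scalar contributions from the expansion of the JLO exponentials cancel against one another via the inner-derivation identity $\nabla^2=[\sigma(\Omega),\cdot\,]$, leaving only the globally defined scalar $3$-form $c=-\nabla(\sigma(\Omega))/2\pi i$ as the twist. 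This is where Bianchi's identity and the normalization conventions must line up precisely.
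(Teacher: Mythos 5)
The paper does not supply a proof of this Proposition at all: it is stated with a citation to Mathai--Stevenson \cite{Mathai-Stevenson} and used as a black box. So there is no in-paper proof to compare against; one can only assess whether your reconstruction is a plausible version of the Mathai--Stevenson argument, and it is. The three-step strategy you lay out --- (1) Morita-local reduction to the classical CHKR theorem on a trivializing chart, together with an independence-of-data transgression argument and a \v Cech-type globalization; (2) establishing the chain-map identity $\Chkr\circ(\bb+\BB)=d_c\circ\Chkr$ by Duhamel expansion of the JLO exponentials, $\nabla^2=\ad\sigma(\Omega)$, Bianchi $\nabla\sigma(\Omega)=-2\pi i\,c$, and integration by parts on $\Delta^k$, then running the $(\bb,\BB)$-bicomplex spectral sequence off of the Hochschild quasi-isomorphism; (3) Mayer--Vietoris reduction of $\ch:K_*\otimes\C\to HP_*$ to the contractible case --- is indeed the standard route, and it is the one the paper implicitly invokes when it later says the analogous statement for $\rho$ ``can be proved \dots by an argument on the \v Cech--de Rham double complex of $M$, just similar to the argument used in Mathai--Stevenson.''

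Two small inaccuracies worth fixing. First, the local collapse of (\ref{Gorokhovsky}) gives $\tfrac{1}{k!}\tr(a_0\,da_1\wedge\cdots\wedge da_k)$ with the \emph{ordinary} matrix trace, not the normalized one; the un-normalized trace is precisely what makes the map Morita-compatible between $C^\infty(U_i,\M_n)$ and $C^\infty(U_i)$. Second, $\sigma(\Omega)$ is not ``only locally defined'': in the finite-rank case the paper fixes a canonical global representative (the unique traceless one in $\Omega^2(M,A)$), and in the infinite-rank case it is a global section of $P\times_{\PU(H)}\mathrm B(H)$-valued $2$-forms, determined up to a global scalar $2$-form. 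The genuine global/local tension you point to is real but sits elsewhere: it is the choice of the \emph{lift} $\tilde\theta_i$, not of $\sigma(\Omega)$, that is chart-dependent, and the scalar discrepancy $\alpha_{ij}$ is what ultimately feeds the Dixmier--Douady class into $c$. With those corrections your sketch is a faithful reconstruction.
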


If $\delta(\A)$ is a torsion class, then there is an alternative formula for the map $\Chkr$, which we will see, is closely related to the relative Chern character (\cite{GetzlerBook}) of Clifford modules. From now on we take $c=0$, then by Theorem \ref{sigma}, up to a closed $2$-form, there is a unique $\sigma(\Omega)$ such that $\ad \sigma(\Omega)=\Omega$ and $\nabla(\sigma(\Omega))=0$.

Define $\psi_k:\A ^{\hat{\otimes} k}\to \A\hat{\otimes}_{\smooth(M)}\Omega^{k}(M)$ by letting
$$\psi_{-1}=0, \ \psi_0=1,\ \psi_1(a_1)=\nabla a_1, \ \psi_2(a_1,a_2)=(\nabla a_1)(\nabla a_2)+a_1 \sigma(\Omega)a_2,$$
\begin{equation}\psi_k(a_1,...,a_k)=(\nabla a_1) \psi_{k-1}(a_2,...,a_k) + a_1 \sigma(\Omega)a_2 \psi_{k-2}(a_3,...,a_k),\ \forall k\ge 2.\end{equation}
In other words, $\psi_k(a_1,...,a_k)$ is obtained as follows: Consider all partitions $\pi$ of the ordered set $\{a_1,...,a_k\}$ into blocks, where each block contains either one or two elements. Assign to each block $\{a_i\}$ of $\pi$ a term of the form $\nabla a_i$, and to each block $\{a_j,a_{j+1}\}$ of $\pi$ a term of the form $a_j\sigma(\Omega)a_{j+1}$. Then let $\psi_{k,\pi}$ be the product of these terms, and $\psi_k(a_1,...,a_k)$ be the sum of $\psi_{k,\pi}$ over all such partitions.
So in its expansion, $\psi_k(a_1,...,a_k)$ consists of a Fibonacci number of summands. Then define $\rho_k: \A ^{\hat{\otimes} k+1}\to \Omega^k(M)$ by
\begin{equation}\label{rho}\rho_k(a_0,...,a_k)={1\over k!}\tr(a_0 \psi_k(a_1,...,a_k))
\end{equation}
for all $k\ge 0$. Note that $\rho$ depends on the choice of connection $\nabla$ and, for infinite rank Azumaya bundles, the choice of $\sigma(\Omega)$, so we will write the complete form $\rho^{\nabla}_{\sigma}$ for $\rho$ when we need to specify $\nabla$ and $\sigma$.
We have the following results about $\rho$.
\begin{lem}$\label{LEMMAA}\rho \circ \bb = 0$.
\end{lem}

\begin{lem}\label{LEMMAX}If $\delta(\A)$ is a torsion class, then for all $k\ge 0$ and $a_i\in \A$,
$$(-1)^{k-1}\rho_k(a_0,\dots, a_{k})  + \rho_k(a_{k},a_0,\dots,a_{k-1})  = d\, \tr \big( a_0\, \psi_{k-1}(a_1,...a_{k-1})\, a_{k} \big).$$
\end{lem}
\begin{proof}Noticing that $d\circ \tr= \tr \circ \nabla$, it suffices to show
\begin{equation}\label{induction} (-1)^{k-1} a_0\psi_k(a_1,..,a_k) +\psi_k(a_0,...,a_{k-1})a_k = \nabla\big(a_0 \psi_{k-1}(a_1,...,a_{k-1}) a_k\big),
\end{equation}
for all $a_i\in \A^+$.
In fact, it is easy to see (\ref{induction}) is true for $k=0,1,2$. Suppose (\ref{induction}) holds for all $k\le m$ for some $m$. Then using  $\nabla^2= \ad \sigma(\Omega)$ and the Bianchi identity $\nabla (\sigma(\Omega)) =0$, we have
\begin{eqnarray*}
\lefteqn{\nabla(a_0\psi_m(a_1,...,a_m) a_{m+1})}\\
 & =& \nabla a_0 \psi_m(a_1,...,a_m) a_{m+1} + a_0 \nabla \psi_m(a_1,..., a_m) a_{m+1} \\
& & + (-1)^m a_0 \psi_m(a_1,...,a_m) \nabla a_{m+1}\\
& =&  \nabla a_0 \psi_m(a_1,...,a_m) a_{m+1} + a_0 \nabla \big( \nabla a_1 \psi_{m-1}(a_2,...,a_m)\big) a_{m+1} \\
& & + a_0 \nabla \big(a_1 \sigma(\Omega) a_2 \psi_{m-2}(a_3,...,a_m) \big) a_{m+1}  + (-1)^m a_0 \psi_m(a_1,...,a_m)\nabla a_{m+1} \\
& =& \psi_{m+1}(a_0,..., a_m) a_{m+1} -a_0 a_1 \sigma(\Omega) \psi_{m-1}(a_2,...,a_m) a_{m+1} \\
& & -a_0 \nabla a_1 \nabla \psi_{m-1}(a_2,...,a_m) a_{m+1}  + a_0 \nabla a_1 \sigma(\Omega) a_2 \psi_{m-2}(a_3,...,a_m) a_{m+1}\\
& & + a_0 a_1 \sigma(\Omega) \nabla a_2 \psi_{m-2}(a_3,...,a_m) a_{m+1} + a_0 a_1 \sigma(\Omega) a_2 \nabla \psi_{m-2}(a_3,...,a_m) a_{m+1}\\
& & + (-1)^m a_0 \psi_m(a_1,...,a_m)\nabla a_{m+1} \\
& =& \psi_{m+1}(a_0,..., a_m) a_{m+1} - a_0 a_1 \sigma(\Omega) \psi_{m-1}(a_2,...,a_m) a_{m+1} \\
& & +(-1)^m a_0 \nabla a_1 \psi_{m}(a_2,...,a_m,1) a_{m+1} - a_0 \nabla a_1 \psi_{m}(1, a_2,...,a_m) a_{m+1}\\
& & +a_0 \nabla a_1 \sigma(\Omega) a_2 \psi_{m-2}(a_3,...,a_m) a_{m+1} + a_0 a_1 \sigma(\Omega) \nabla a_2 \psi_{m-2}(a_3,...,a_m) a_{m+1}\\
& &+ (-1)^m a_0 a_1 \sigma(\Omega) a_2  \psi_{m-1}(a_3,...,a_m,1) a_{m+1} \\
& &+  a_0 a_1 \sigma(\Omega) a_2 \psi_{m-1}(1,a_3,...,a_m) a_{m+1}\\
& & + (-1)^m a_0 \psi_m(a_1,...,a_m)\nabla a_{m+1}\\
& =& \psi_{m+1}(a_0,..., a_m) a_{m+1} - a_0 a_1 \sigma(\Omega) \psi_{m-1}(a_2,...,a_m) a_{m+1} \\
& & +(-1)^m a_0 \nabla a_1 \psi_{m-2}(a_2,...,a_{m-1}) a_m \sigma(\Omega) a_{m+1}  \\
& & +a_0 a_1 \sigma(\Omega) \nabla a_2 \psi_{m-2}(a_3,...,a_m) a_{m+1}\\
& &+ (-1)^m a_0 a_1 \sigma(\Omega) a_2  \psi_{m-3}(a_3,...,a_{m-1})a_m \sigma(\Omega) a_{m+1}\\
& & +  a_0 a_1 \sigma(\Omega) a_2 \sigma(\Omega) a_3 \psi_{m-3}(a_4,...,a_m) a_{m+1}\\
& & + (-1)^m a_0 \psi_m(a_1,...,a_m)\nabla a_{m+1}\\
& =& \psi_{m+1}(a_0,..., a_m) a_{m+1} +(-1)^m a_0 \nabla a_1 \psi_{m-2}(a_2,...,a_{m-1}) a_m \sigma(\Omega) a_{m+1}  \\
& &+ (-1)^m a_0 a_1 \sigma(\Omega) a_2  \psi_{m-3}(a_3,...,a_{m-1})a_m \sigma(\Omega) a_{m+1}\\
& & + (-1)^m a_0 \psi_m(a_1,...,a_m)\nabla a_{m+1}\\
& =& \psi_{m+1}(a_0,..., a_m) a_{m+1}+ (-1)^m a_0 \psi_{m+1}(a_1,..., a_{m+1}).
\end{eqnarray*}
Thus identity (\ref{induction}) is proved by induction.
\end{proof}
\begin{lem}\label{LEMMAB} If $\A$ is unital, then $\rho_{k+1} \circ \BB_k=d \circ \rho_k$ on $\A ^{\hat{\otimes} k+1}$.
\end{lem}
\begin{proof} Recall that
$$\BB=(1-\lambda)\circ s \circ N,$$
where $\lambda$ is the cyclic permutation
$$\lambda(a_0,...,a_n)=(-1)^n (a_n,a_0,...,a_{n-1}),$$
$N$ is the sum of all cyclic permutations
$$N_k=1+\lambda+\cdots+\lambda^k,$$ and $s_k:C_k(\A)\to C_{k+1}(\A)$ is the extra degeneracy operator
$$s(a_0,...,a_k)= (1, a_0,...,a_k).$$
 Thus we have
\begin{eqnarray*}\rho \circ \BB (a_0,...,a_k) & = & \rho \circ (1-\lambda) \circ \sum_{i=0}^{k} (-1)^{ki}(1,a_i,...,a_{i-1})\\
(\text{By Lemma \ref{LEMMAX}.})&=&{1\over (k+1)!} \sum_{i=0}^k (-1)^{ki}(-1)^k d \tr(\psi(a_i,...,a_{i-2})a_{i-1})\\
&=& {1\over (k+1)}\sum_{i=0}^k (-1)^{k(i-1)} d \rho(a_{i-1},...,a_{i-2})\\
(\text{By Lemma \ref{LEMMAX} again.})&=& d \circ \rho (a_0,...,a_k).
\end{eqnarray*}
\end{proof}

\begin{cor}\label{CORB}$\rho_{k+1} \circ \BB_k=d \circ \rho_k$ on $\overline{C}^{\mathrm{red}}_k(\A)$.
\end{cor}
We now take a look at what happens for formula (\ref{rho}) when we choose different connections and different $\sigma$'s. Let $\nabla_t=\nabla_0+t \theta$ be a family of $\PU_n$-connections on the Azumaya bundle $A$, where $t\in I=[0,1]$, and $\theta$ is a generalized $1$-form. Denote by $\Omega_t$ the curvature of $\nabla_t$. If $A$ has finite rank, then the choice of $\sigma$ is unique, and $\sigma(\Omega_t)$ is a smooth family of traceless generalized $2$-forms. If $A$ has infinite rank, then given $\sigma_0(\Omega_0)$ and $\sigma_1(\Omega_1)$ with $\nabla_i(\sigma_i(\Omega_i))=0$ and $\ad \sigma_i(\Omega_i)=\Omega_i$, where $i=0,1$, we can always find a smooth family $\sigma_t(\Omega_t)$ with $\nabla_t(\sigma_t(\Omega_t))=0$ and $\ad \sigma_t(\Omega_t)=\Omega_t$. In fact,
 consider the projection $\widetilde{\pi}:M\times I \to I$ and the pull-back bundle $\widetilde{\pi}^*A=A\times I$ over $M\times I$ with connection $\widetilde{\nabla}=
\nabla_t + dt \partial_t$. The curvature on $\widetilde{\pi}^* A$ is $\widetilde{\Omega}=\Omega_t+\theta dt$.  Since $\widetilde{\pi}^*A$ is also an Azumaya bundle with torsion Dixmier-Douady class,
we can choose some $\sigma(\widetilde{\Omega})$ with $\widetilde{\nabla}(\sigma(\widetilde{\Omega}))=0$ and $\ad \sigma(\widetilde{\Omega})=\widetilde{\Omega}$. Suppose
 $\sigma(\widetilde{\Omega})=\Lambda_t+\eta_t dt,$
where $\Lambda_t$ and $\eta_t$ are a smooth family of generalized $2$-forms and a smooth family of generalized $1$-forms on $M$, then
$ \ad \Lambda_t=\Omega_t$, $\ad \eta_t=\theta$, and $\nabla_t \Lambda_t=0$. Therefore, by Theorem \ref{sigma}, $\sigma_0(\Omega_0)-\Lambda_0$ and $\sigma_1(\Omega_1)-\Lambda_1$ are closed scalar $2$-forms.
Then we can take a smooth family of generalized $2$-forms
$$\sigma_t(\Omega_t)=\Lambda_t+(1-t)(\sigma_0(\Omega_0)-\Lambda_0)+t(\sigma_1(\Omega_1)-\Lambda_1).$$
We have $\nabla_t(\sigma_t(\Omega_t))=0$ and $\ad \sigma_t(\Omega_t)=\Omega_t$.

Consider the good open covering $\{U_i\}$ of $M$ again. Suppose $$(\sigma_0(\Omega_0)-\Lambda_0)-(\sigma_1(\Omega_1)-\Lambda_1)=d \eta_i$$ on $U_i$ for some scalar $1$-form $\eta_i$, and let
$$\widetilde{\sigma}(\widetilde{\Omega})=\sigma_t(\Omega_t)+\eta_t dt+\eta_i dt.$$
Then we have $\widetilde{\nabla}(\widetilde{\sigma}(\widetilde{\Omega}))=0$ and $\ad \widetilde{\sigma}(\widetilde{\Omega})=\widetilde{\Omega}$.

Following Mathai-Stevenson \cite{Mathai-Stevenson}, let
$$K(a_0,...,a_k)=\int_I dt\wedge(\iota_{\partial_t}\rho_{\widetilde{\sigma}}^{\widetilde{\nabla}}(a_0,...,a_k))$$
on $U_i$.
Here the functions $a_0,...,a_k$ inside $\rho_{\widetilde{\sigma}}^{\widetilde{\nabla}}()$ are considered as functions on $U_i\times I$ which are constant in the $t$ direction.
 We have the following lemma.

\begin{lem}\label{LEMMAC} The map $K$ defined above is a chain homotopy -- we have the formula
$$\rho^{\nabla_1}_{\sigma_1}-\rho^{\nabla_0}_{\sigma_0}=K\circ (\bb+\BB) - d \circ K$$
on $\overline{C}^{\mathrm{red}}_*(\A|_{U_i})$.
\end{lem}
\begin{proof}By Lemma \ref{LEMMAA} and Corollary \ref{CORB},
\begin{eqnarray*}K\circ (\bb+\BB) - d \circ K &=& K\circ \BB- d\circ K \\
& = & \int_I dt \wedge \iota_{\partial_t} \circ d \circ \rho_{\widetilde{\sigma}}^{\widetilde{\nabla}} -d \int_I dt \wedge \iota_{\partial_t} \circ \rho_{\widetilde{\sigma}}^{\widetilde{\nabla}} \\
& = & \int_I dt \wedge (\iota_{\partial_t}\circ  d + d \circ \iota_{\partial_t}) \circ \rho_{\widetilde{\sigma}}^{\widetilde{\nabla}}\\
&=& \int_I dt \wedge {\partial \over \partial t} \circ \rho_{\widetilde{\sigma}}^{\widetilde{\nabla}} \\
&=& \rho^{\nabla_1}_{\sigma_1}-\rho^{\nabla_0}_{\sigma_0}.
\end{eqnarray*}
\end{proof}
\begin{thm}\label{THMA} If $\delta(\A)$ is a torsion class, then the map $\rho_k$ in {\rm (\ref{rho})} induces a quasi-isomorphism between the complexes
$$\xrightarrow{\bb+\BB}\overline{C}^{\mathrm{red}}_{\ev}(\A)\xrightarrow{\bb+\BB}\overline{C}^{\mathrm{red}}_{\odd}(\A)\xrightarrow{\bb+\BB}
\text{ and } \xrightarrow{d}\Omega^{\ev}(M)\xrightarrow{d}\Omega^{\odd}(M)\xrightarrow{d},$$
and its induced isomorphism
\begin{equation}\rho: HP_{\bullet}(\A)\xrightarrow{\cong} H_{\dR}^{\bullet}(M)
\end{equation}
coincides with the map $\Chkr$ in {\rm(\ref{Ch})}.
\end{thm}
\begin{proof} First, by Lemma \ref{LEMMAA} and Corollary \ref{CORB}, we see that $\rho$ is a chain map. Then the homotopy formula in Lemma \ref{LEMMAC} and Mathai-Stevenson \cite{Mathai-Stevenson}'s spectral sequence argument on the \v{C}ech-de Rham bicomplex prove the theorem.
\end{proof}
Because the map $\rho$ is degree-preserving, we have
\begin{cor}If $\delta(\A)$ is a torsion class, then the map $\rho$ in {\rm(\ref{rho})} induces a chain isomorphism
$$(HH_*(\A), \BB)\rightarrow (\Omega^*(M), d).$$
\end{cor}
\begin{thm}\label{alternative} If $\delta(\A)$ is a torsion class, then the map $\rho$ in {\rm (\ref{rho})} induces a homomorphism
$$\rho: C^{\lambda}_*(\A)\to \Omega^*(M)/d(\Omega^{*-1}(M)),$$
{\em where $C^{\lambda}_*(\A)$ is the Connes complex of $\A$ (cf. \cite{NDG}, \cite{Connes})},
and an isomorphism $$\rho: HP_{\bullet}(\A)\xrightarrow{\cong} H_{\dR}^{\bullet}(M),$$
which coincides with $\Chkr$ in {\rm(\ref{Ch})}.
\end{thm}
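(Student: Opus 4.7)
The plan has three parts: reduce to the finite-rank case using the torsion hypothesis, verify that $\rho$ descends to a chain map, and compare it with $\Chkr$ to deduce the isomorphism.

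\textbf{Reduction.} Torsion Dixmier--Douady classes are Morita-realized by finite-rank Azumaya bundles, so I may assume $A$ has finite rank $n$, whence $\sigma(\Omega)\in\Omega^2(M,A)$ is traceless by part (1) of the theorem preceding (\ref{Gorokhovsky}). Moreover, the image of the torsion class $\delta(A)$ in $H^3_{\dR}(M,\C)$ vanishes, so $c=-\nabla(\sigma(\Omega))/(2\pi i)$ is exact, say $c=d\beta$. Wedging with $\exp\beta$ gives a canonical chain isomorphism $(\Omega^*(M),d_c)\cong(\Omega^*(M),d)$, hence $H^*_{\dR}(M,c)\cong H^*_{\dR}(M)$, and this is the identification under which $\rho$ is to be compared with $\Chkr$.

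\textbf{Chain map property.} Next I would compute $\rho_{k}(b(a_0,\ldots,a_{k+1}))$ by expanding through the recursive definition of $\psi_k$ and applying the Leibniz rule $\nabla(a_j a_{j+1})=(\nabla a_j)a_{j+1}+a_j\nabla a_{j+1}$ together with the curvature identity $\nabla^2=\ad\sigma(\Omega)$. The resulting terms split into ``Leibniz'' cross-terms (where $\nabla$ splits a product) and ``commutator'' terms (where $\sigma(\Omega)$ slides past an $a_j$); the Fibonacci combinatorics built into the partition sum defining $\psi_k$ is precisely the bookkeeping that pairs these up so that, after invoking tracelessness of $\sigma(\Omega)$ and cyclicity of $\tr$, they cancel modulo $d(\Omega^{k-1}(M))$. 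A parallel but easier calculation gives cyclic invariance of $\rho_k(a_0,a_1,\ldots,a_k)$, so $\rho$ descends from $\overline C^{\mathrm{red}}_*(\A)$ to the cyclic complex $C^{\lambda}_*(\A)$. The same identities yield $\rho\circ B=d\circ\rho$ modulo exact forms, extending $\rho$ to a chain map from the $(b+B)$-bicomplex to $(\Omega^*(M),0)$ and inducing the claimed homomorphism $HP_*(\A)\to H^*_{\dR}(M)$.

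\textbf{Comparison with $\Chkr$ and main obstacle.} To identify the induced map on $HP_*$ with that of (\ref{Ch}), I would expand each exponential $e^{-s_j\sigma(\Omega)}$ in (\ref{Gorokhovsky}) as a power series, evaluate the simplex integrals $\int_{\Delta^k}\prod_j s_j^{m_j}\,d\boldsymbol{s}=\prod_j m_j!\,/\,(k+\sum_j m_j)!$, and compare term by term with the partition expansion of $\psi_k$ appearing in $\rho_k$. Both representatives insert copies of $\sigma(\Omega)$ among the factors $a_0,\nabla a_1,\ldots,\nabla a_k$; they are not termwise equal, but one exhibits an explicit Hochschild $b$-cochain whose coboundary is their difference, using cyclicity of $\tr$, the tracelessness of $\sigma(\Omega)$, and the relation $\nabla\sigma(\Omega)=-2\pi i c=-2\pi i d\beta$. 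Since $\Chkr$ is an isomorphism on $HP_*$ by Mathai--Stevenson, so is $\rho$, and the two maps coincide under the identification $H^*_{\dR}(M,c)\cong H^*_{\dR}(M)$. The main obstacle is the second step, specifically the verification that the Fibonacci expansion of $\psi_k$ is rigid enough to force $b\rho_k$ to be exact: the combinatorial matching of Leibniz cross-terms against $\sigma(\Omega)$-commutator terms under $\tr$ is delicate, and the tracelessness of $\sigma(\Omega)$ available only in the finite-rank (i.e.\ torsion) case is essential to closing all the cancellations.
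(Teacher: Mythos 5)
Your proposal captures the right overall structure — reduce to finite rank using torsion, establish that $\rho$ descends to the cyclic complex, and then compare with $\Chkr$ — but the internal bookkeeping is inverted and two of your three steps diverge from the paper in ways that leave gaps.

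First, you have the difficulties backwards in the chain-map step. In the paper the identity $\rho_k\circ b=0$ on $C_{k+1}(\A)$ is the \emph{easy} observation (it follows directly from the Leibniz rule and cyclicity of $\tr$, no appeal to tracelessness of $\sigma(\Omega)$ is needed), whereas the genuine inductive effort goes into the \emph{cyclic-invariance-modulo-exact} identity
$$(-1)^{k-1}a_0\psi_k(a_1,\ldots,a_k)+\psi_k(a_0,\ldots,a_{k-1})a_k=\nabla\bigl(a_0\,\psi_{k-1}(a_1,\ldots,a_{k-1})\,a_k\bigr),$$
which is proved by a full-page induction using $\nabla^2=\ad\sigma(\Omega)$ and $\nabla\sigma(\Omega)=0$. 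You describe the $b$-closure as ``the main obstacle'' and the cyclicity as ``a parallel but easier calculation''; the paper does the reverse. Note also that in the finite-rank (torsion) setting with the traceless choice of $\sigma(\Omega)$, Bianchi gives $\nabla\sigma(\Omega)=0$ outright, not $-2\pi i\,d\beta$; your version would introduce spurious correction terms into the induction.

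Second, the paper never asserts $\rho\circ B=d\circ\rho$ (even modulo exact forms), and that step in your plan is an unjustified leap; it does not follow from ``the same identities,'' and there is no indication in the paper that $\rho$ is a $(b,B)$-chain map at all. The paper instead sidesteps this: since $\ch:K_0(\A)\to HP_0(\A)$ is an isomorphism, it suffices to check that $\rho(\ch[p])$ is closed for each idempotent $p$, which is immediate from the observation $p(\nabla p)^{2i+1}p=0$ and $\nabla(p\,\psi_2(p,p))=0$. This is both shorter and avoids committing to a chain-homotopy claim you would have to prove. Third, the final identification of $\rho$ with $\Chkr$ at the $HP_*$ level is obtained in the paper by an argument on the \v Cech--de Rham double complex (the Mathai--Stevenson technique), not by expanding the JLO exponentials as power series and exhibiting a coboundary term by term; and even if you pursued your route, you would need a $(b+B)$-primitive rather than the Hochschild $b$-cochain you mention, since two cochains differing by a $b$-coboundary need not induce the same map on $HP_*$.
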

\begin{proof}First, by Lemma \ref{LEMMAX} we see that the induced homomorphism $${C}^{\lambda}_*(\A) \, \to \Omega^*(M)/d(\Omega^{*-1}(M))$$ is well-defined.

 Next, we show that the induced map $\rho: HP_{\bullet}(\A)\to H_{\dR}^{\bullet}(M)$ is well-defined. Note that $\rho_k\circ \bb=0$ on $C_{k+1}(\A)$. This means that the map ${C}^{\lambda}_*(\A)/ \bb ({C}^{\lambda}_{*+1}(\A))$ $\to$ $\Omega^*(M)/d(\Omega^{*-1}(M))$ is well-defined. Then it suffices to show that the images of $HP_*(\A)$ under the map $\rho$ are represented by closed forms. We prove this only for the case of even degree, and the odd degree case is similar. Since $\ch: K_0(\A)\to HP_0(\A)$ is an isomorphism, elements of $HP_0(\A)$ are generated by $\ch[p]$ for $[p]\in K_0(\A)$ (here we may suppose $\A$ is unital for simplicity).  The periodic Connes-Chern character $\ch[p]$ is represented by a sequence of cyclic cycles $\{\ch^{\lambda}_0(p),\ch^{\lambda}_2(p),...\}$, where
$$\ch^{\lambda}_{2m}(p)=(-1)^m{(2m)!\over m!}\tr(p^{\otimes 2m+1}) \in C^{\lambda}_{2m}(\A).$$
Observe that $p(\nabla p)^{2i+1}p=0$ for all idempotent $p$ and $i\ge 0$, then
$$\rho_{2k}(\ch_{2k}^{\lambda}(p))=(-1)^k{(2k)!\over k!}\tr\big(p\ \psi_{2k}(p,...,p)\big)
=(-1)^k{(2k)!\over k!}\tr\big(p \psi_2(p,p)^k \big),$$
because in the expansion of $p\ \psi_{2k}(p,...,p)p$, any term that has a factor $p(\nabla p)^{2i+1}p$ vanishes. Since $\nabla(p \psi_2(p,p))=0$, it follows that $\rho(\ch[p])$ is a closed form for all $[p]\in K_0(\A)$.

Finally, since the periodic cyclic homology of $\A$ defined from the $C^{\lambda}$-complex is the same as that defined from the $(\bb,\BB)$-bicomplex, by Theorem \ref{THMA}, the induced homomorphism $\rho: HP_{\bullet}(\A)\to H_{\dR}^{\bullet}(M)$ is an isomorphism identical to $\Chkr$ in {\rm(\ref{Ch})}.
\end{proof}
\end{section} 
\begin{section}{Spectral analysis of spectral triples}\label{spectralanalysis}
In this section we review the definition and some analytical properties of spectral triples. Note that a slight modification to the standard definition of spectral triple (cf. \cite{CM}) is made so that it will be more convenient to develop the theory in this paper. In fact, in definition \ref{unital_sptr} we require that the second entry $\mathcal H$ of a spectral triple $(\A,\mathcal H, D)$ to be an $\A$-module as well as the smooth Sobolev domain of $D$, instead of the Hilbert space $\bar{\mathcal H}$, the norm completion of $\mathcal H$. So in application in differential geometry, spectral triples defined this way operate directly with smooth sections of vector bundles.  For a spectral triple in the conventional sense, that would be a strong requirement, as strong as the smoothness condition in Appendix B in \cite{ConnesMoscovici}.

Suppose that $D$ is a densely defined self-adjoint operator on a
Hilbert space $H$, and that $D$ has compact resolvent. Let
$\mu_1>\mu_2>\cdots$ be the list of eigenvalues of $(D^2+1)^{-1}$ in
decreasing order, and $V_i\subset H$ be the eigenspace corresponding
to $\mu_i$ for each $i$. Then every vector $v \in H$ can be uniquely
represented as a sequence $(v_1,v_2,\dots)$ with $v_i\in V_i$ and
$\sum_i\|v_i\|^2<\infty$, and vice versa.

For every $s \ge 0$, consider the following subspaces of $H$,
$$W^s(D)=\{(v_1,v_2,\dots)\in H \mid \sum_i \mu_i^{-s}\|v_i\|^2<\infty\},$$
with the norm $\|(v_1,v_2,\dots)\|_s=\sqrt{\sum_i
\mu_i^{-s}\|v_i\|^2}$;
$$W^{s,p}(D)=\{(v_1,v_2,\dots)\in H \mid \sum_i \mu_i^{-sp/2}\|v_i\|^p<\infty\}, \quad \forall p>0,$$
with the norm $\|( v_1,v_2,\dots)\|_{s,p}=\left(\sum_i
\mu_i^{-sp/2}\|v_i\|^p \right)^{1/p}$; and
$$W^{s,\infty}(D)=\{(v_1,v_2,\dots)\in H \mid \sup_i \mu_i^{-s/2}\|v_i\|<\infty\},$$
with the norm $\|( v_1,v_2,\dots)\|_{s,\infty}=\sup_i
\mu_i^{-s/2}\|v_i\|$. $W^s=W^{s,2}$ has a natural Hilbert space
structure.
\begin{prop}[Rellich] For each $\epsilon>0$, the inclusion $W^{s+\epsilon}\to W^s$ is
compact.
\end{prop}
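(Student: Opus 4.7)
The plan is to argue directly from the spectral decomposition: a bounded sequence in $W^{s+\epsilon}$ has a subsequence that converges in $W^s$, obtained by a standard Cantor diagonal selection coupled with a tail estimate that exploits $\mu_i\to 0$.

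First I would take a sequence $\{v^{(n)}\}\subset W^{s+\epsilon}$ with $\|v^{(n)}\|_{s+\epsilon}\le C$ and write $v^{(n)}=(v^{(n)}_1,v^{(n)}_2,\dots)$ with $v^{(n)}_i\in V_i$. Each eigenspace $V_i$ is finite-dimensional because $\mu_i>0$ is an eigenvalue of the compact operator $(D^2+1)^{-1}$, and the componentwise bound $\|v^{(n)}_i\|^2\le \mu_i^{s+\epsilon}C^2$ shows that the sequence stays in a bounded subset of $V_i$ for each $i$. Selecting subsequences diagonally gives a single subsequence, still denoted $v^{(n)}$, along which $v^{(n)}_i\to v_i\in V_i$ for every $i$. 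Applying Fatou's lemma to $\sum_i\mu_i^{-s-\epsilon}\|v_i\|^2$ shows that the candidate limit $v=(v_i)$ belongs to $W^{s+\epsilon}\subset W^s$.

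The key step is the tail estimate. Since $(D^2+1)^{-1}$ is compact, its positive eigenvalues decrease monotonically to $0$. For $i>N$ we have $\mu_i\le \mu_N$, hence $\mu_i^{-s}=\mu_i^{-s-\epsilon}\,\mu_i^{\epsilon}\le \mu_N^{\epsilon}\,\mu_i^{-s-\epsilon}$, which yields
\[ \sum_{i>N}\mu_i^{-s}\|v^{(n)}_i-v_i\|^2 \;\le\; \mu_N^{\epsilon}\sum_{i>N}\mu_i^{-s-\epsilon}\|v^{(n)}_i-v_i\|^2 \;\le\; (2C)^2\mu_N^{\epsilon}. \]
Given $\eta>0$, I would pick $N$ so that $(2C)^2\mu_N^{\epsilon}<\eta/2$, and then pick $n$ so large that the finite head $\sum_{i\le N}\mu_i^{-s}\|v^{(n)}_i-v_i\|^2<\eta/2$, which is possible because this is a finite sum of norms on the finite-dimensional spaces $V_1,\dots,V_N$, each term tending to zero by construction. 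Combining the two bounds gives $\|v^{(n)}-v\|_s^2<\eta$ eventually, so the image of the unit ball of $W^{s+\epsilon}$ is relatively compact in $W^s$.

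There is no real obstacle here: the statement is the abstract spectral-theoretic form of the classical Rellich lemma, and the only ingredient beyond bookkeeping is $\mu_N^{\epsilon}\to 0$, which is precisely the compact-resolvent hypothesis imposed on $D$. The same argument with obvious modifications gives compactness of the inclusions $W^{s+\epsilon,p}\to W^{s,p}$ for $1\le p<\infty$, and a similar splitting into head and tail treats the $p=\infty$ case, although that is not needed for the statement as written.
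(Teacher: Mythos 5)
Your argument is correct and is the standard spectral proof of Rellich compactness (finite-dimensionality of each $V_i$, diagonal extraction, Fatou for the limit, and the head/tail split using $\mu_i^{-s}\le\mu_N^{\epsilon}\mu_i^{-s-\epsilon}$ for $i>N$ together with $\mu_N\to 0$). The paper states this proposition without proof, so there is nothing to compare against; your argument fills that gap cleanly.
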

\begin{prop} $W^1\subset H$ is the domain of the self-adjoint operator $D$, and $D:W^1\to H$ is a Fredholm operator.
\end{prop}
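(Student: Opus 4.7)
The plan is to exploit the spectral decomposition of $D$ induced by its commutation with the compact resolvent $(D^2+1)^{-1}$. First I would observe that $D$ commutes strongly with $(D^2+1)^{-1}$, and hence leaves each (finite dimensional) eigenspace $V_i$ invariant. On $V_i$ the operator $D^2$ acts as the scalar $\lambda_i:=\mu_i^{-1}-1\ge 0$, so $\|Dv_i\|^2=\langle D^2 v_i,v_i\rangle=\lambda_i\|v_i\|^2$ for every $v_i\in V_i$.

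To identify $W^1$ with $\Dom(D)$, I would argue that a vector $v=(v_1,v_2,\ldots)\in H$ lies in the self-adjoint domain of $D$ iff the partial sums $\sum_{i\le N} Dv_i$ are Cauchy in $H$, equivalently $\sum_i\lambda_i\|v_i\|^2<\infty$. Combined with the built-in condition $\sum_i\|v_i\|^2=\|v\|^2<\infty$, this is the same as $\sum_i(1+\lambda_i)\|v_i\|^2=\sum_i\mu_i^{-1}\|v_i\|^2<\infty$, which is exactly the definition of $W^1=W^{1,2}$. The operator $D:W^1\to H$ is then bounded in the graph norm, matching the Hilbert space structure of $W^1$.

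For the Fredholm property, compactness of $(D^2+1)^{-1}$ forces $\mu_i\to 0$, hence $\lambda_i\to\infty$; in particular $\lambda_i=0$ holds for at most one index (the one with $\mu_i=1$), and by the identity $\ker D=\ker D^2$ (valid for self-adjoint $D$) the kernel of $D$ is contained in a single $V_i$ and is finite dimensional. Setting $c:=\min\{\lambda_i:\lambda_i>0\}>0$—a positive minimum because the $\lambda_i$ form a discrete sequence accumulating only at $\infty$—one obtains $\|Dv\|^2\ge c\|v\|^2$ for every $v\in W^1\cap(\ker D)^\perp$, so $D$ is bounded below there. This forces the range of $D$ to be closed, and self-adjointness gives $(\mathrm{range}\,D)^\perp=\ker D$, from which $\mathrm{coker}\,D\cong\ker D$ is finite dimensional; in particular $\mathrm{index}(D)=0$.

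The calculations are essentially routine once the spectral decomposition is in hand; the only point requiring care is the spectral-gap argument for closed range, which rests on the fact that compactness of the resolvent keeps the nonzero $\lambda_i$ bounded away from $0$.
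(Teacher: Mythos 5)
The paper states this proposition without proof, treating it as a standard consequence of the hypothesis that $D$ is self-adjoint with compact resolvent, so there is no argument in the text to compare against. Your proof is correct and fills in exactly the standard spectral-theoretic justification: on each $V_i$ one has $\|Dv_i\|^2=\lambda_i\|v_i\|^2$ with $\lambda_i=\mu_i^{-1}-1$, so the $W^1$-norm $\sum_i\mu_i^{-1}\|v_i\|^2=\sum_i(1+\lambda_i)\|v_i\|^2$ is precisely the graph norm of $D$ and $\Dom(D)=W^1$; compactness of the resolvent forces the increasing sequence $\lambda_i$ to tend to $\infty$, so $\ker D=\ker D^2$ lies in at most one $V_{i_0}$ and the nonzero $\lambda_i$ have a positive minimum $c$, giving the lower bound $\|Dv\|\ge\sqrt{c}\,\|v\|$ on $(\ker D)^\perp$. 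Your use of this bound to get closed range is sound provided one invokes (as you do implicitly) that $D$ is a closed operator, so a Cauchy sequence $v_n$ with $Dv_n\to w$ converges to some $v\in\Dom(D)$ with $Dv=w$; and $\mathrm{coker}\,D\cong(\mathrm{range}\,D)^\perp=\ker D^*=\ker D$ is finite dimensional, whence $D:W^1\to H$ is Fredholm of index zero. No gaps.
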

Let $W^{\infty}=\bigcap_{s>0}W^s$, then $W^{\infty}$ is a Fr\'echet
space with a family of norms $\|\cdot\|_s$. It is easy to see that
restricted to $W^{\infty}$, the mapping $D: W^{\infty} \to
W^{\infty}$ is continuous with respect to the Fr\'echet space
topology.

We say the operator $D$ is \definition{finitely summable} or has
\definition{spectral dimension} less than $2d$ (for some real number $d>0$), if
$(D^2+1)^{-d}$ is a trace class operator.

\begin{thm}\label{continuity}Suppose $D$ has finite spectral dimension.
If $T \in B(H)$ is a bounded operator that maps $W^{\infty}$ into
$W^{\infty}$, then the restricted mapping $T:W^{\infty}\to
W^{\infty}$ is also continuous.\end{thm}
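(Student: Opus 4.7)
The plan is to prove this by applying the closed graph theorem for Fréchet spaces. The first step is to record the Fréchet structure on $W^\infty$: by definition it is the projective limit $\bigcap_{s>0} W^s$, whose topology is generated by the countable family of norms $\{\|\cdot\|_s : s=1,2,\ldots\}$, and completeness is inherited from the completeness of each individual $W^s$. I also need the observation that this Fréchet topology is finer than the ambient $H$-topology; this follows from the elementary estimate $\|v\|_s \geq \mu_1^{-s/2}\|v\|_H$, valid for every $s>0$, which is itself a consequence of $\mu_i \leq \mu_1$ and hence $\mu_i^{-s}\geq \mu_1^{-s}$. In particular the inclusion $W^\infty \hookrightarrow H$ is continuous.

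Once this is in place, the theorem reduces to checking that the everywhere-defined linear map $T|_{W^\infty}: W^\infty \to W^\infty$ has closed graph. Take a sequence $v_n \to v$ in $W^\infty$ with $T v_n \to w$ in $W^\infty$. The continuous inclusion above gives $v_n \to v$ and $T v_n \to w$ in $H$, and since $T\in B(H)$ is bounded on $H$, one also has $T v_n \to T v$ in $H$, forcing $T v = w$. The closed graph theorem for Fréchet spaces then yields the desired continuity of $T|_{W^\infty}$.

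There is essentially no genuine obstacle here: the argument is a direct packaging of the closed graph theorem, with the only substantive observation being that the Fréchet topology on $W^\infty$ dominates the Hilbert space topology on $H$, so that limits in the two topologies must agree. I would also note that the finite spectral dimension hypothesis is not actually used in this proof; it is presumably stated because it is a standing assumption for the subsequent development of the section, where trace-class properties of $(D^2+1)^{-d}$ and related heat-kernel estimates genuinely require it.
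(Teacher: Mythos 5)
Your proof is correct, and it takes a genuinely different route from the paper's. The paper approaches the theorem through a pair of technical lemmas that characterize continuous operators $T\colon W^{\infty}\to W^{\infty}$ in terms of the growth of their block matrix entries $(t_{ij})$ with respect to the eigenspace decomposition: the estimate $\|\sum_i \mu_i^{-s} t_{ij}\|<C+\mu_j^{-r}$ is shown to be necessary and sufficient for continuity, and the argument for sufficiency genuinely invokes the finite spectral dimension hypothesis (via the Sobolev embedding $W^{s+2d/p,\infty}\subset W^{s,p}$, used to control $\|\cdot\|_{0,1}$ and $\|\cdot\|_{r,1}$). Your argument, by contrast, bypasses this entirely: you package the claim as a closed graph theorem between the Fr\'echet space $W^{\infty}$ and itself, using only that the Fr\'echet topology refines the ambient $H$-topology (the elementary estimate $\|v\|_H\le \mu_1^{s/2}\|v\|_s$) to force uniqueness of limits. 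This is cleaner, shorter, and---as you correctly observe---it reveals that the finite-summability hypothesis is superfluous for this particular statement; it is a standing assumption for the rest of the section rather than a genuine ingredient here. The one trade-off is that the paper's matrix-based lemma yields a usable quantitative description of the operators involved, which may be of independent value in subsequent estimates, whereas the closed graph argument is purely qualitative. Both proofs are sound; yours is the more economical one for the statement as posed.
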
 This theorem can be proved by
the following lemmas.
\begin{lem}[Sobolev embeddings]\label{estimate}If $D$ has spectral dimension less than
$2d$, then we have the following obvious estimate:
$$\|v\|_{s,\infty}\le \|v\|_{s,p} \le \bigg(\sum_j \mu_j^d\bigg)^{1/p} \|v\|_{s+{2d\over p},\infty},
\quad \forall v\in  H, \forall s\ge 0, \forall p>0,$$ i.e., there
are bounded embeddings $W^{s+{2d\over p},\infty}\subset W^{s,p}
\subset W^{s,\infty}$.
\end{lem}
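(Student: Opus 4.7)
The lemma is labelled ``obvious'' by the author, and indeed both inequalities reduce to comparing the $\ell^p$-type weighted norms that define the $W^{s,p}(D)$ scale, so the plan is simply to unwind the definitions and apply elementary sequence-space inequalities.

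The plan for the first inequality $\|v\|_{s,\infty}\le \|v\|_{s,p}$ is to invoke the elementary fact that for any non-negative sequence $(a_i)$ and any $p>0$ one has $\sup_i a_i\le (\sum_i a_i^p)^{1/p}$, since each individual term satisfies $a_j^p\le \sum_i a_i^p$. Apply this to $a_i=\mu_i^{-s/2}\|v_i\|$; raising to the $p$-th power gives $\mu_i^{-sp/2}\|v_i\|^p$, which is exactly the summand in $\|v\|_{s,p}^p$. This immediately yields the desired bound.

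For the second inequality, I would exploit the defining sup-bound of $W^{s+2d/p,\infty}$. By definition $\mu_i^{-(s+2d/p)/2}\|v_i\|\le \|v\|_{s+2d/p,\infty}$ for every $i$, hence $\|v_i\|\le \mu_i^{(s+2d/p)/2}\|v\|_{s+2d/p,\infty}$. Raising to the $p$-th power and multiplying by $\mu_i^{-sp/2}$ gives
$$\mu_i^{-sp/2}\|v_i\|^p \;\le\; \mu_i^{-sp/2}\,\mu_i^{(s+2d/p)p/2}\,\|v\|_{s+2d/p,\infty}^p \;=\; \mu_i^{d}\,\|v\|_{s+2d/p,\infty}^p,$$
where the exponents collapse since $-\tfrac{sp}{2}+\tfrac{(s+2d/p)p}{2}=d$. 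Summing over $i$ and taking $p$-th roots yields $\|v\|_{s,p}\le (\sum_j \mu_j^d)^{1/p}\|v\|_{s+2d/p,\infty}$, with $\sum_j\mu_j^d<\infty$ precisely because the hypothesis that $D$ has spectral dimension less than $2d$ says that $(D^2+1)^{-d}$ is trace class, so in particular the distinct eigenvalues $\mu_j$ of $(D^2+1)^{-1}$ are $d$-summable.

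There is essentially no obstacle here: the only content is the exponent bookkeeping $-\tfrac{sp}{2}+\tfrac{(s+2d/p)p}{2}=d$, which is what forces the shift $s\mapsto s+\tfrac{2d}{p}$ in the statement, and the observation that trace-class of $(D^2+1)^{-d}$ gives finiteness of $\sum_j\mu_j^d$. The resulting chain of inequalities gives the two bounded embeddings $W^{s+2d/p,\infty}\subset W^{s,p}\subset W^{s,\infty}$ claimed at the end of the lemma.
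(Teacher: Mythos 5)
Your proof is correct. The paper gives no proof of this lemma (it is simply asserted as an ``obvious estimate''), and your argument is exactly the natural one: the first inequality is the elementary $\ell^\infty\le\ell^p$ bound, and the second follows by bounding each $\|v_i\|$ via the sup-norm, collapsing exponents to $\mu_i^d$, and summing, with $\sum_j\mu_j^d<\infty$ following from $\mathrm{Tr}\,(D^2+1)^{-d}=\sum_j(\dim V_j)\mu_j^d\ge\sum_j\mu_j^d$.
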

\begin{lem}Suppose $D$ has finite spectral dimension.
\begin{enumerate}\item Let $T: W^{\infty}\to W^{\infty}$ be a continuous
operator. Suppose for each $j$,
$$T(0,\dots,0,v_j,0,\dots)=(t_{1j}v_j,t_{2j}v_j,\dots),\quad \forall v_j\in V_j,$$
where $(t_{ij})$ is an infinite matrix with entries $t_{ij}\in
\Hom(V_j, V_i)$. Then $(t_{ij})$ satisfies the property: for any
$s>0$ there exist $C$ and $r>0$  such that
$$\|\sum_i \mu_i^{-s} t_{ij}\|<C+\mu_j^{-r},\quad \forall j.$$
\item Conversely any matrix $(t_{ij})$ with entries $t_{ij}\in
\Hom(V_j, V_i)$ satisfying the above property represents a
continuous operator $T:W^{\infty}\to W^{\infty}$.
\end{enumerate}
\end{lem}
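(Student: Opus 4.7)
The plan is to prove parts (1) and (2) by translating between the Fr\'echet continuity of $T\colon W^{\infty}\to W^{\infty}$ and quantitative column-growth estimates on the block entries $t_{ij}$, using single-eigenspace test vectors in one direction and a weighted Cauchy--Schwarz in the other.

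For part (1), I would first unpack Fr\'echet continuity: for every $s\ge 0$ there exist $s'\ge 0$ and $C>0$ with $\|Tv\|_s\le C\|v\|_{s'}$ for all $v\in W^{\infty}$. Testing this on a vector concentrated in one eigenspace, $v=(0,\ldots,0,v_j,0,\ldots)$ with $v_j\in V_j$ of unit norm, gives $\|v\|_{s'}=\mu_j^{-s'/2}$ and $\|Tv\|_s^2=\sum_i\mu_i^{-s}\|t_{ij}v_j\|^2$. Taking the supremum over unit $v_j\in V_j$ and reading $\|\sum_i\mu_i^{-s}t_{ij}\|$ as the operator norm of the column map $v_j\mapsto(t_{ij}v_j)_i$ from $V_j$ into $W^s$, the resulting inequality $\|\sum_i\mu_i^{-s}t_{ij}\|\le C\mu_j^{-s'/2}$ is then absorbed into the stated form $C'+\mu_j^{-r}$ by choosing $r$ slightly larger than $s'/2$ and $C'$ large enough to dominate the finitely many bounded values.

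For part (2), given such a column bound I define $T$ on a block vector $v=(v_1,v_2,\ldots)$ by $(Tv)_i=\sum_j t_{ij}v_j$ and must find, for each prescribed $s$, some $s'$ and $C'$ with $\|Tv\|_s\le C'\|v\|_{s'}$. A weighted Cauchy--Schwarz yields
\[
\|(Tv)_i\|^2\le\Bigl(\sum_j\mu_j^{s'}\|t_{ij}\|^2\Bigr)\|v\|_{s'}^2,
\]
so $\|Tv\|_s^2\le\|v\|_{s'}^2\sum_{i,j}\mu_i^{-s}\mu_j^{s'}\|t_{ij}\|^2$. Swapping the two sums and inserting the column estimate $\sum_i\mu_i^{-s}\|t_{ij}\|^2\le(C+\mu_j^{-r})^2\le 2C^2+2\mu_j^{-2r}$ reduces the double sum to $2C^2\sum_j\mu_j^{s'}+2\sum_j\mu_j^{s'-2r}$. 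The finite spectral dimension hypothesis gives $\sum_j\mu_j^{\alpha}<\infty$ for $\alpha\ge d$, so both tails converge once $s'\ge d+2r$, supplying the required continuity of $T$ from $W^{s'}$ into $W^{s}$, and hence on $W^{\infty}$.

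The main obstacle, modest in substance, lies in pinning down the intended meaning of $\|\sum_i\mu_i^{-s}t_{ij}\|$ so that parts (1) and (2) really are mutually dual: reading it as the weighted column operator norm $v_j\mapsto(t_{ij}v_j)_i$ into $W^s$ turns each direction into the immediate converse of the other, with no deeper ingredient needed beyond the spectral-dimension summability and the elementary weighted Cauchy--Schwarz.
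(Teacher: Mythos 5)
Your part (i) is correct, and via a route cleaner than the paper's: you use the standard quantitative form of continuity of a linear map between Fr\'echet spaces (for every $s$ there exist $s'$, $C$ with $\|Tv\|_s\le C\|v\|_{s'}$), test on vectors concentrated in a single eigenspace, and read off the column bound directly. The paper argues instead by contradiction, extracting a sequence $j(n)$ along which the bound fails, building $u=\sum u_{j(n)}\in W^\infty$ with $\|\sum_i\mu_i^{-s}t_{ij(n)}u_{j(n)}\|=1$, and noting that the truncations $T(\sum_{j\le n}u_j)$ are then not Cauchy in $W^{2s}$. Both approaches are valid.

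Part (ii), however, has a genuine gap. After the weighted Cauchy--Schwarz you arrive at
$$\|Tv\|_s^2\le\|v\|_{s'}^2\sum_j\mu_j^{s'}\sum_i\mu_i^{-s}\|t_{ij}\|^2,$$
and you then ``insert the column estimate $\sum_i\mu_i^{-s}\|t_{ij}\|^2\le(C+\mu_j^{-r})^2$.'' But in your own reading the hypothesis bounds the column \emph{operator} norm $\bigl\|\sum_i\mu_i^{-s}t_{ij}\bigr\|=\sup_{\|v_j\|=1}\bigl(\sum_i\mu_i^{-s}\|t_{ij}v_j\|^2\bigr)^{1/2}$, and passing the supremum inside the sum yields only
$$\left\|\sum_i\mu_i^{-s}t_{ij}\right\|^2\le\sum_i\mu_i^{-s}\|t_{ij}\|^2,$$
i.e.\ the \emph{wrong} direction. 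When $\dim V_j>1$ the two sides genuinely differ: if $t_{1j},t_{2j}$ are projections onto orthogonal lines in $V_j$ (and the other entries vanish), the column operator norm stays of order $\max(\mu_1^{-s/2},\mu_2^{-s/2})$, while $\sum_i\mu_i^{-s}\|t_{ij}\|^2=\mu_1^{-s}+\mu_2^{-s}$. So the substitution you make is unjustified. One can salvage the $\ell^2$ route by replacing the operator norm with the Hilbert--Schmidt norm $\|t_{ij}\|^2\le\sum_k\|t_{ij}e^{(j)}_k\|^2$; summing then produces an extra factor of $\dim V_j$, which the actual content of finite spectral dimension, $\sum_j(\dim V_j)\,\mu_j^{d}<\infty$, absorbs. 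The paper avoids this entirely by using the triangle inequality rather than Cauchy--Schwarz, keeping the concrete vector $u_j$ inside the column bound,
$$\|Tu\|_{2s}\le\sum_j\left\|\sum_i\mu_i^{-s}t_{ij}u_j\right\|\le\sum_j(C+\mu_j^{-r})\|u_j\|,$$
and then bounding the resulting $\ell^1$-type sums by Fr\'echet seminorms via the preceding Sobolev embedding lemma.
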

\begin{proof}\begin{enumerate}
\item For any $v\in W^{\infty}$, we see that as $n\to\infty$, $\sum_{j=0}^n v_j\to
v$ in $W^s$, therefore $\sum_{j=0}^n v_j\to v$ in $W^{\infty}$.
Because $T$ is continuous, it follows that $T(\sum_{j=0}^n
v_j)=(\sum_{j=0}^n t_{1j} v_j, \sum_{j=0}^n t_{2j} v_j,\dots) \to
Tv$, hence $(Tv)_i = \sum_j t_{ij} v_j$. Suppose the claim is not
true, then there must exist $s>0$, such that for any $C$ and $n$,
there is $j(n,C)$ satisfying
$$\|\sum_i \mu_i^{-s} t_{ij(n,C)}\| > C+\mu_{j(n,C)}^{-n}.$$
Thus one may find an increasing sequence $\{j(n)\}$, such that
$$\|\sum_i \mu_i^{-s} t_{ij(n)}\| > \mu_{j(n)}^{-n}.$$
For each $j$, pick $u_j\in V_j$ so that $\|u_{j(n)}\|=\|\sum_i
\mu_i^{-s} t_{ij(n)}\|^{-1}<\mu_{j(n)}^n$ and $\|\sum_i \mu_i^{-s}
t_{ij(n)}u_{j(n)}\|=1$, while $u_j=0$ if $j\neq j(n), \forall n$.
Then, because of the finite spectral dimension, $u=\sum_j u_j\in
W^{\infty}$, but $T(\sum_{j=0}^nu_j)$ does not converge in $W^{2s}$
as $n\to\infty$, and this yields a contradiction.
\item Suppose the matrix $(t_{ij})$ has that property. Define $T:W^{\infty}\to W^{\infty}$ by
$$(Tv)_i=\sum_j t_{ij}v_j,\quad \forall v\in W^{\infty}.$$
For any sequence $u(n)\in W^{\infty}$, we now prove that if $u(n)\to
0$ as $n\to \infty$ then $Tu(n)\to 0$. For any $s>0$,
$$\|Tu(n)\|_{2s}=\|\sum_i \mu_i^{-s} \sum_j t_{ij} u_j(n)\| \le \sum_j \|\sum_i \mu_i^{-s} t_{ij} u_j(n)\|$$
$$\le\sum_j(C+\mu_j^{-r})\|u_j(n)\| = C\|u(n)\|_{0,1}+\|u(n)\|_{r,1}.$$
Since $u(n)\to 0$ in $W^{\infty}$ implies $\|u(n)\|_{s'}\to 0$ for
all $s'$, using Lemma \ref{estimate}, it follows that
$\|Tu(n)\|_{2s}\to 0$ for all $s$. So this implies $Tu(n)\to 0$, and
therefore $T:W^{\infty}\to W^{\infty}$ is a continuous operator.
\end{enumerate}\end{proof}

For any pre-Hilbert space $\mathcal H$, we define the $*$-algebra
$B(\mathcal H)=\{T\in B(\HH)\mid T(\mathcal H)\subset \mathcal H,
T^*(\mathcal H)\subset \mathcal H\}$.
\begin{defn}\label{unital_sptr}A triple $(\A, \mathcal H, D)$ is said to be a unital \definition{spectral
triple} if it is given by a unital pre-$C^*$-algebra $\A$, a
pre-Hilbert space $\mathcal H$ with a norm-continuous unital
$*$-representation $\A\to B(\mathcal H)$, and a self-adjoint
operator $D$ on $\HH$ called {\em Dirac operator}, with the
following properties:
\begin{enumerate}
\item $D$ has compact resolvent,
\item  $W^{\infty}(D)= \mathcal H$,
\item under the representation of $\A$, the commutator $[D, a]:\mathcal H \to\mathcal H$ is norm-bounded for each $a\in
\A$.
\end{enumerate}
Besides, we always assume that $\A$ is a locally convex topological
$*$-algebra with a topology finer than the norm topology of $\A$,
and the representation $\A\times \mathcal H \to \mathcal H$ is
jointly continuous with respect to the locally convex topology of
$\A$ and the Fr\'echet topology of $\mathcal H$.
\end{defn}

Note that if $(\A, \mathcal H, D)$ is a unital spectral triple, then
$W^1(D)$, the domain of $D$, also forms a left $\A$-module because of
the last condition in the definition.

A spectral triple $(\A, \mathcal H, D)$ is said to be
\definition[spectral triple]{even} if there is a $\Z_2$-grading on $\mathcal H$: $$\mathcal H=\mathcal H_+\oplus \mathcal
H_-,$$ so that the grading operator $$\gamma=\left[\begin{matrix} \id_{\mathcal H_+} &
0\\0 & -\id_{\mathcal H_-} \end{matrix}\right]:\mathcal H\to \mathcal H$$ commutes with all
$a\in \A$ and anti-commutes with $D$. Spectral triples equipped with
no such gradings are said to be \definition[spectral triple]{odd}.

Two spectral triples $(\A_1, \mathcal H_1, D_1)$ and $(\A_2,
\mathcal H_2, D_2)$ with isomorphic algebras $\A_1\cong \A_2$ are said to be
\definition{unitarily equivalent} if there is a unitary operator
$U:\bar{\mathcal H_1}\to\bar{\mathcal H_2}$ intertwining the two
representations of $\A_i$ and the two Dirac operators $D_i$ in an
obvious way. For the even case the unitary operator $U$ also needs to be grade
preserving.
\end{section} 
\begin{section}{Morita equivalence of spectral triples}\label{morita}
In this section we introduce the notion of Morita equivalence of spectral triples. Let $\A$ be a pre-$C^*$-algebra. Recall that a right $\A$-module
$\mathcal S$ is called a {\em pre-Hilbert} (or {\em Hermitian}) {\em
right $\A$-module} if there is an $\A$-valued inner product
$(\cdot,\cdot): \mathcal S \times \mathcal S \to \A$, such that for
all $x,y\in \mathcal S$, $a\in \A$,
\begin{enumerate}
\item $(x,y)=(y,x)^*$,
\item $(x,ya)=(x,y)a$,
\item $(x,x)\geq 0$, and $(x,x)=0$ only if $x=0$.
\end{enumerate}
The norm on $\mathcal S$ is given by $\|x\|=\|(x,x)\|^{1\over 2}$,
and its norm completion $\bar{\mathcal S}$ (which is automatically a
pre-Hilbert right $\bar{\A}$-module) is called a {\em Hilbert right
$\bar{\A}$-module}. Hilbert left modules can be defined in the same
manner. In particular, every Hilbert space is a Hilbert $\C$-module.

If $\mathcal S$ is a pre-Hilbert right $\A$-module, then its
\definition{conjugate space}
$$\mathcal S^*=\{f_x:=(x,\cdot)\mid x\in\mathcal S\}$$
is a pre-Hilbert left $\A$-module, and for all $x,y\in \mathcal S$,
$a\in \A$,
$$af_x :=a(x,\cdot)=f_{x(a^*)},\quad (f_x,f_y):=(y,x),\quad (f_x,a f_y)=a (f_x,f_y).$$

If $\mathcal S$ is a pre-Hilbert right $\A$-module,
$B_{\A}(\bar{\mathcal S)}$ denotes the $*$-algebra of all module
homomorphisms $T:\bar{\mathcal S} \to \bar{\mathcal S}$ for which
there is an adjoint module homomorphism $T^* :\bar{\mathcal S} \to
\bar{\mathcal S}$ with $(Tx,y)=(x, T^*y)$ for all $x,y\in
\bar{\mathcal S}$. Define
$$B_{\A}(\mathcal S)=\{T\in B_{\A}(\bar{\mathcal S})\mid
T(\mathcal S)\subset \mathcal S, T^*(\mathcal S)\subset \mathcal
S\}.$$ In particular, if $\A=\C$, then $B_{\C}(\mathcal
S)$=$B(\mathcal S)$. If $\A$ is unital and $\mathcal S$ is unital
and finitely generated, then $B_{\A}(\mathcal S)$ in fact consists
of all $\A$-endomorphisms of $S$, i.e., $B_{\A}(\mathcal
S)=\End_{\A}(\mathcal S)$.

Suppose $\A$ and $\B$ are unital pre-$C^*$-algebras. Let $\E$ be a
unital finitely generated projective right $\A$-module, then as a
summand of a free module, one can endow $\E$ with a pre-Hilbert
module structure (which is unique up to unitary $\A$-isomorphism).
Suppose $\B$ acts on $\E$ on the left, and the representation $\B\to
B_{\A}(\E)$ is unital, $*$-preserving and norm-continuous. We also
assume that $\E$ is endowed with the topology induced from the
locally convex topology of $\A$, that $\B$ is a locally convex
topological $*$-algebra with a topology finer than the norm
topology, and that the representation $\B\times \E\to \E$ is jointly
continuous with respect to the locally convex topologies on $\B$ and
$\E$. We call such a $\B$-$\A$-bimodule with the above
structure a
\definition{finite Kasparov $\B$-$\A$-module}, then we introduce the
following definition.
\begin{defn}\label{conn}Suppose $\sigma=(\A,\mathcal H,D)$ is a unital spectral triple. A
\definition{$\sigma$-connection} on a finite Kasparov $\B$-$\A$-module $\E$ is a linear mapping
$$\nabla: \E \to \E \otimes_{\A} B(\mathcal H)$$
with the following properties:
\begin{enumerate}
\item $\nabla(\xi a)= (\nabla \xi)a + \xi\otimes_{\A}[D, a]$,
\item $(\xi,\nabla\varepsilon)-(\nabla\xi, \varepsilon)=[D,
(\xi,\varepsilon)]$,
\end{enumerate}
for all $a\in \A$, and $\xi, \varepsilon\in \E$.
\end{defn}
Note that by the notation $(\nabla\xi, \varepsilon)$, which is a bit
ambiguous, we actually mean $(\varepsilon,\nabla\xi)^*\in B(\mathcal
H)$.

Since $\E$ is finitely generated, it follows that for each $b\in
\B$, the commutator $[\nabla, b]$ corresponds to an element in $B(\E
\otimes_{\A} \mathcal H)$. It also follows that any two
$\sigma$-connections on $\E$ differ by a Hermitian element in $B(\E
\otimes_{\A} \mathcal H)$.

From the above data $(\A, \mathcal H, D, \B, \E, \nabla)$, one can
construct a new spectral triple $\sigma^{\E}=(\B, \mathcal H^{\E},
D^{\E})$ for $\B$ (cf. Connes \cite[\S VI.3]{Connes}). The
pre-Hilbert space $\mathcal H^{\E}$ is $\E\otimes_{\A}\mathcal H$
with inner product given by
$$<\xi_1\otimes_{\A}h_1, \xi_2\otimes_{\A}h_2> = <h_1,
(\xi_1,\xi_2) h_2>, \quad \forall \xi_i\in \E, h_i\in \mathcal H.$$
The Dirac operator $D^{\E}$ on $\overline{\mathcal H^{\E}}$ is given
by
$$D^{\E}(\xi\otimes_{\A}h)=\xi\otimes_{\A} Dh +
(\nabla\xi)h,\quad \forall \xi\in\E, h\in W^1(D).$$ It is easy to
see that the commutator $[D^{\E}, b]=[\nabla, b]\in B(\E
\otimes_{\A} \mathcal H)$ is bounded. To verify that $D^{\E}$ is
self-adjoint with domain $\E\otimes_{\A}W^1(D)$, that
$W^{\infty}(D^{\E})=\mathcal H^{\E}$, and that $D^{\E}$ has compact
resolvent, it is adequate to check choosing one particular
$\sigma$-connection on $\E$, because bounded perturbations do not
affect these conclusions.

Recall that a \definition{universal connection} on a pre-Hilbert
right $\A$-module $\mathcal S$ is a linear mapping $\nabla: \mathcal
S \to \mathcal S\otimes_{\A}\Omega_u^1(\A)$ that satisfies the
Leibniz rule
$$\nabla(s a)=(\nabla s)a+\delta_u a,$$ and $\nabla$ is said to
be Hermitian if
$$(s,\nabla\varepsilon)-(\nabla s,\varepsilon)=\delta_u
(s,\varepsilon).$$ Here $\Omega_u^1(\A)$ is the space of universal
$1$-forms of $\A$, and involutions $(\delta_u a)^*$ are set to be
$-\delta_u (a^*)$. Cuntz and Quillen \cite{Cuntz} showed that only
projective modules admit universal connections. Given a universal
Hermitian connection $\nabla$ on a finite Kasparov module $\E$, by sending
$1$-forms $\delta_u a$ to $[D, a]$, one can associate with the
universal connection $\nabla$ a $\sigma$-connection $\nabla_D$ on $\E$ for any spectral
triple $\sigma=(\A,\mathcal H, D)$.

Let $p\in M_n(\A)$ be a projection (i.e., a self-adjoint idempotent
$n\times n$ matrix). Now we consider the right $\A$-module $p\A^n$.
There is a canonical universal connection on $p\A^n$ which is given
by the matrix $p \diag\{\delta_u,\dots, \delta_u\}$ or simply  $\nabla(p\boldsymbol a)= p\delta_u (p\boldsymbol a), \forall
\boldsymbol a\in \A^n$, and this connection is Hermitian.
\begin{defn}A universal connection $\nabla$ on a pre-Hilbert right $\A$-module $\mathcal S$
is said to be
\definition[universal connection]{projectional} if there is a
unitary $\A$-isomorphism $\phi: \mathcal S\to p\A^n$ for some $n$
and some projection $p$ such that $\nabla = \phi^{-1}\circ
p\delta_u \circ\phi$.
\end{defn}

A projectional universal connection on each finite projective
pre-Hilbert $\A$-module is unique up to unitary $\A$-isomorphism. If
$\E$ is a finite Kasparov $\B$-$\A$-module admitting a universal
connection $\nabla^{\E}$ and $\F$ is a finite Kasparov $\mathcal
C$-$\B$-module admitting a universal connection $\nabla^{\F}$, then
there is a twisted universal connection
$\nabla^{\F}\circ\nabla^{\E}$ defined in an obvious way on the
finite Kasparov $\mathcal C$-$\A$-module $\F\otimes_{\B}\E$.
Furthermore if $\nabla^{\E}$ and $\nabla^{\F}$ are projectional then
so is $\nabla^{\F}\circ\nabla^{\E}$. This gives rise to a category --
\definition{the category of noncommutative differential spaces}
$\bf{NDG}$ defined as follows. Objects in $\bf{NDG}$ consist of all unital pre-$C^*$ algebras. If $\A$ is a unital pre-$C^*$ algebra, we denote by $X_{\A}$ the corresponding object in $\bf{NDG}$.  Morphisms from $X_{\A}$ to $X_{\B}$
in $\bf{NDG}$ are isomorphism classes of finite Kasparov
$\B$-$\A$-modules with projectional universal connections. If $\E$ is an finite Kasparov $\B$-$\A$-module with a projective universal connection $\nabla$, we denote by $(\E,\nabla)$ the corresponding morphism in $\Hom_{\mathbf{NDG}}(X_{\A}, X_{\B})$.

Denote by $\rm{Sptr}^0(\A)$ the set of unitary equivalence classes of even spectral
triples for $\A$. The set $\rm{Sptr}^0(\A)$ has an abelian monoid structure, the binary operation of which is the direct sum operation for spectral triples with algebra $\A$.
Thus $\rm{Sptr}^0$ yields a functor from $\mathbf{NDG}$ to $\mathbf{AbMonoid}$, the category of abelian monoids, given by
$$X_{\A}\mapsto \rm{Sptr}^0(\A) \text{ and } \mathrm{Sptr}^0((\E,\nabla)): \sigma\mapsto \sigma^{\E}, \forall \sigma\in  \mathrm{Sptr}^0(\A).$$

\begin{remark} It is not
difficult to extend morphisms of $\bf{NDG}$ to graded modules with
super-connections in the sense of Quillen \cite{superconnection};
however, in this paper except Section \ref{Kasparov}, we only focus on finite Kasparov modules with
trivial gradings.
Baaj-Julg \cite{Baaj} established the theory of unbounded Kasparov
modules and showed that every element in the bivariant KK-theory can
be represented by an unbounded Kasparov module. It would be nice if
morphisms of $\bf{NDG}$ could be enlarged to unbounded (even)
Kasparov modules $(E, \mathscr D,\Gamma)$ with a grading $\Gamma$ and an appropriate universal
connection $\nabla$, and thereby $D^{E,\mathscr
D,\Gamma}(\xi\otimes_{\A}h)=\mathscr D\xi\otimes_{\A} h + \Gamma
\xi\otimes_{\A} Dh + \Gamma(\nabla\xi)h$. The notion of connection
for bounded Kasparov modules introduced by Connes-Skandalis was
well-known, whereas theory of connection for unbounded Kasparov modules has been developed in a recent work by Mesland \cite{Bram}.
\end{remark}

\begin{defn}Two unital spectral triples $\sigma_1=(\A_1, \mathcal H_1, D_1)$ and $\sigma_2=(\A_2, \mathcal H_2,$  $D_2)$
are said to be \definition{Morita equivalent} if $\A_1$ and $\A_2$
are Morita equivalent as algebras and there is a finite Kasparov
$\A_2$-$\A_1$-module $\E$ with a $\sigma_1$-connection, such that
$\E$ is an equivalence bimodule and $\sigma_2$ is unitarily
equivalent to $\sigma_1^{\E}$.
\end{defn}
Notice that the definition of Morita equivalence for spectral triples here, using $\sigma$-connections rather than the usual universal connections, will give rise to a symmetric relation:
\begin{thm} The Morita equivalence between spectral triples is an
equivalence relation.
\end{thm}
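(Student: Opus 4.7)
The plan is to verify reflexivity, symmetry, and transitivity using the composition law for $\sigma$-connections on tensor products of Kasparov bimodules. For reflexivity I would take $\E = \A$ as the trivial Kasparov $\A$-$\A$-bimodule with inner product $(a,a') = a^{*}a'$, equipped with the canonical projectional universal connection $\delta_u : \A \to \Omega_u^1(\A)$. The induced $\sigma$-connection sends $a \mapsto [D,a]$, and the associated spectral triple $\sigma^{\A} = (\A,\, \A \otimes_{\A}\mathcal H,\, D^{\A})$ is unitarily equivalent to $\sigma$ via the canonical isomorphism $a\otimes h \mapsto ah$; the Leibniz rule $D^{\A}(a\otimes h) = a\otimes Dh + [D,a]h$ matches $D(ah)$ under this identification.

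For transitivity, suppose $\sigma_1 \sim \sigma_2$ via $(\E,\nabla^{\E})$ and $\sigma_2 \sim \sigma_3$ via $(\F,\nabla^{\F})$. The bimodule $\F \otimes_{\A_2} \E$ is again an equivalence $\A_3$-$\A_1$-bimodule, and I would define a candidate $\sigma_1$-connection $\nabla^{\F\otimes\E}$ on it with values in $(\F\otimes_{\A_2}\E)\otimes_{\A_1} B(\mathcal H_1)$ via the rule
\[ \nabla^{\F\otimes\E}(\zeta \otimes \xi)\, h \;=\; \zeta \otimes (\nabla^{\E}\xi)\,h \,+\, (\nabla^{\F}\zeta)\,(\xi\otimes h), \]
for $\zeta\in\F$, $\xi\in\E$, $h\in \mathcal H_1$, under the identification $(\F\otimes_{\A_2}\E)\otimes_{\A_1}\mathcal H_1 = \F\otimes_{\A_2}(\E\otimes_{\A_1}\mathcal H_1)$. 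A direct check shows that $\nabla^{\F\otimes\E}$ satisfies the Leibniz and Hermiticity axioms of Definition \ref{conn} (inherited from $\nabla^{\E}$ and $\nabla^{\F}$), and that the corresponding Dirac operator $D^{\F\otimes\E}$ coincides with $(D^{\E})^{\F}$. Hence $\sigma_1^{\F\otimes_{\A_2}\E}$ is unitarily equivalent to $(\sigma_1^{\E})^{\F} \cong \sigma_2^{\F} \cong \sigma_3$.

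For symmetry, given $(\E,\nabla)$ realizing $\sigma_1\sim\sigma_2$, I take the conjugate module $\E^{*}$, which is a finite Kasparov $\A_1$-$\A_2$-module because the equivalence bimodule property of $\E$ forces $\E^{*}$ to be finitely generated projective over $\A_2$. The natural dual of $\nabla$ yields a $\sigma_2$-connection $\nabla^{*}$ on $\E^{*}$; here the Hermiticity condition of Definition \ref{conn} is exactly what makes $\nabla^{*}$ well-defined and Hermitian. Applying transitivity to the canonical isomorphism $\E^{*}\otimes_{\A_2}\E \cong \A_1$ together with reflexivity then delivers $\sigma_2^{\E^{*}} \cong \sigma_1$.

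The main obstacle is not the three steps individually but the careful verification that the composed connection $\nabla^{\F\otimes\E}$ and the dual connection $\nabla^{*}$ satisfy the two axioms of Definition \ref{conn}. For $\nabla^{\F\otimes\E}$ this is a bookkeeping exercise on the $\A_2$-balanced tensor product using the Leibniz rules for $\nabla^{\E}$ and $\nabla^{\F}$; for $\nabla^{*}$ it rests on the compatibility of the $\A_1$-valued inner product on $\E$ with $\nabla$. Both are essentially routine once the correct identifications of Hilbert modules are in place, but they are indispensable for the axioms to survive.
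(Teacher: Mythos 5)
Your proof takes essentially the same route as the paper: reflexivity via $\E=\A$ with the trivial connection $a\mapsto[D,a]$, transitivity via the tensor product of Kasparov bimodules, and symmetry via the conjugate module $\E^{*}$. The only real difference is presentational: where you invoke the composition identity $\E^{*}\otimes_{\A_2}\E\cong\A_1$ together with transitivity and reflexivity, the paper instead writes down the dual connection $(\nabla^{\F}f)(\xi\otimes_{\A}h)=-f(\nabla\xi)\otimes_{\A}h+[D,f\xi]h$ explicitly and verifies $(D^{\E})^{\F}=D$ by direct computation, which pins down the ``natural dual'' that your outline leaves implicit; you should make that choice explicit (or appeal, as the paper also suggests, to projectional universal connections) before concluding that the composed connection on $\E^{*}\otimes_{\A_2}\E\cong\A_1$ is indeed the trivial one.
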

\begin{proof}Reflexivity: $(\A, \mathcal H, D)$ is Morita equivalent to itself via
the trivial connection $\nabla a= [D, a]$ on $\A$.\\
Transitivity: It is straight forward by definition. \\
Symmetry: Suppose $(\B, \mathcal H^{\E}, D^{\E})$ is Morita
equivalent to $\sigma=(\A, \mathcal H, D)$ via a $\sigma$-connection
$\nabla$ on $\E$. Let $\F=\E^*$. Define $\nabla^{\F}:\F \to
\F\otimes_{\B}B(\mathcal H^{\E})$ by
$$(\nabla^{\F} f) (\xi\otimes_{\A}h) = -f(\nabla\xi)\otimes_{\A}h+[D,f\xi]h,
\quad \forall f\in \F, \xi\in\E,h\in \mathcal H.$$ Here we use a map
$\mathcal H^{\E}\to \mathcal H$ to represent an element in
$\F\otimes_{\B}B(\mathcal H^{\E})$. One can verify that $\nabla^{\F}$ is
a $(\B,\mathcal H^{\E},D^{\E})$-connection. Now we check
$(D^{\E})^{\F}=D$ as follows,
\begin{eqnarray*}(D^{\E})^{\F}(f\otimes_{\B}\xi\otimes_{\A}h)
&=& f \otimes_{\B} D^{\E}(\xi\otimes_{\A}h) + (\nabla^{\F}
f)(\xi\otimes_{\A}h) \\
&=& f\otimes_{\B}\xi Dh + f(\nabla \xi)h + (\nabla^{\F}
f)(\xi\otimes_{\A}h) \\
&=& f\xi Dh + f(\nabla \xi)h - f(\nabla \xi)h + [D, f\xi]h \\
&=& D(f\xi h).
\end{eqnarray*}
(An alternative way to prove the symmetry property is using bounded
perturbations of the $\sigma$-connection and
$\sigma^{\E}$-connection that are associated to projectional universal
connections.)

\noindent In conclusion, Morita equivalence of spectral triples is
an equivalence relation.
\end{proof}

\begin{remark}
There are at least three possible spaces of ``connections'' for spectral triples:
\begin{enumerate}
\item The space of $\sigma$-connections.
\item The space of {\em 1-form connections}, i.e., connections induced from universal connections, which is a subspace of type i).
\item The space of connections induced from projectional universal connections, which is a subspace of type ii).
\end{enumerate}
There are several reasons not restricting ourselves to the 1-form connections when trying to define Morita relation between spectral triples:
\begin{enumerate}
\item The Morita relation defined using 1-form connections is not symmetric.
\item For the $Z_2$-graded cases (see Section \ref{Kasparov}), one need to use super-connections which are in general high order form connections \cite{GetzlerBook}.
\item The Morita relation defined using 1-form connections does not include all bounded perturbations of Dirac operators.
\end{enumerate}
As a special case, $\A$ is Morita equivalent to itself via $\E=\A$.
Using universal connections on $\A$, one can construct new spectral
triples which are called inner fluctuations of spectral triples
\cite{Chamse}. {\em Morita equivalence of spectral triples with common algebra $\A$ and pre-Hilbert space
$\mathcal H$ is exactly bounded perturbation of Dirac operators.} In fact, $(\A,\mathcal H,D)$ is Morita equivalent to $(\A,\mathcal H,D+B)$ for any bounded self-adjoint operator $B$ (or bounded odd self-adjoint operator $B$ in the even case) through the bimodule $\A$ and the $(\A,\mathcal H,D)$-connection $\nabla$ given by $\nabla(1):=B$. Conversely,  $(\A,\mathcal H,D+B)$ is Morita equivalent to $(\A,\mathcal H,D)$ through the $(\A,\mathcal H,D+B)$-connection $\nabla^*$ given by $\nabla^*(1)=-B$. This is not the case for inner fluctuation of spectral
triples, as the latter is generally not an equivalence relation when
$\A$ is noncommutative.  In general, if we confine ourselves to
equivalence bimodules with universal connections, the symmetry
property in the above proof does not hold. However, if we restrict the universal connections
on equivalence bimodules to the projectional ones, the symmetry property
holds again. Unfortunately, Levi-Civita connections are not projectional connections.
\end{remark}

Suppose $(\E,\nabla)$ is a morphism in $\Hom_{\bf{NDG}}(X_{\A},X_{\B})$, then there is a free right $\A$-module $\A^m$ and a projection $p$ in $\M_m(\A)$ such that $\E\cong p\A^m$. Let $\{e_1,...,e_m\}$ be the standard generators of $\A^m$. For each $b$ in $\B$, one can find a matrix $\alpha(b)$ in $p\M_m (\A)p$, such that
$bpe_i=\sum\limits_j e_j\alpha_{ji}(b)$.

The K-theory, Hochschild (co)homology and (periodic) cyclic (co)homology are all functors on the category $\bf{NDG}$. For instance, suppose $(b_0,...,b_n)$ is a Hochschild $n$-cycle representing an element $\Phi\in HH_*(\B)$, then $\E(\Phi)\in HH_*(\A)$ is the Hochschild $n$-cycle given by the Dennis trace map
$$\tr(\alpha(b_0)\otimes\cdots\otimes \alpha(b_n))=\sum_{i_0,...,i_n}(\alpha_{i_0i_1}(b_0),\alpha_{i_1i_2}(b_1),...,\alpha_{i_ni_0}(b_n)).$$
Note that $\alpha$ depends on the choice of the isomorphism $\E\cong p\A^m$; however, it does induce the well-defined morphisms $\E:HH_*(B)\to HH_*(\A)$.

Furthermore, the Connes-Chern characters \cite{NDG} are natural transformations from the K-theory functor $X_{\A}\mapsto K_0(\A)$ to the periodic cyclic homology functor $X_{\A}\mapsto HP_0(P)$, and from functors $X_{\A}\mapsto \rm{Sptr}^0(\A)$ and $X_{\A}\mapsto K^0(\bar{\A})$ to the periodic cyclic cohomology functor $X_{\A}\mapsto HP^0(\A)$. The naturalness of Connes-Chern characters is illustrated in the following commutative diagrams
$$
\CD
K_0(\B) @>\E>> K_0(\A) \\
@V\ch VV  @V\ch VV  \\
HP_0(\B) @>\E>> HP_0(\A),
\endCD
\quad\qquad
\CD
\rm{Sptr}^0(\A) @>(\E,\nabla)>> \rm{Sptr}^0(\B) \\
@V\ch VV  @V\ch VV  \\
HP^0(\A) @>\E>> HP_0(\B).
\endCD
$$

\begin{prop}The following diagrams
$$
\CD
K_0(\A) &\quad \times\quad & \rm{Sptr}^0(\A) @>\ind>> \Z \\
@AA \E A  @VV(\E,\nabla) V   @| \\
K_0(\B) &\quad \times\quad & \rm{Sptr}^0(\B) @>\ind>> \Z,
\endCD
\quad\qquad
\CD
HP_0(\A) &\quad \times\quad & HP^0(\A) @>>> \C \\
@AA \E A  @VV\E V   @| \\
HP_0(\B) &\quad \times\quad & HP^0(\B) @>>> \C,
\endCD
$$and
$$
\CD
K_0(\A) &\quad \times\quad & \rm{Sptr}^0(\A) @>\ind>> \Z \\
@VV \ch V  @VV\ch V   @VVV \\
HP_0(\A) &\quad \times\quad & HP^0(\A) @>>> \C,
\endCD
$$
commute.
\end{prop}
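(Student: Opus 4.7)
The plan is to verify each of the three diagrams in turn. The third diagram is the classical Connes index theorem relating the analytic index pairing to the pairing of Connes-Chern characters; I will invoke this from \cite{NDG,Connes} as a black box. The first two diagrams encode naturality of these pairings under morphisms in $\bf{NDG}$, and the main content will be to identify the K-theoretic pullback $\E:K_0(\B)\to K_0(\A)$ and the spectral-triple pushforward $(\E,\nabla):\rm{Sptr}^0(\A)\to \rm{Sptr}^0(\B)$ as adjoint operations with respect to the index pairing.

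For the first diagram, I will choose a projection $q\in\M_k(\B)$ representing $[q]\in K_0(\B)$ and an even spectral triple $\sigma=(\A,\mathcal H,D,\gamma)\in\rm{Sptr}^0(\A)$. Using $\E\cong p_\E \A^m$ and the matrix $\alpha(b)\in p_\E\M_m(\A)p_\E$ introduced just before the proposition, $\E([q])$ is represented by the projection $\alpha(q)\in\M_{km}(\A)$, so $\ind(\E([q]),\sigma)$ equals the index of the compressed graded operator $\alpha(q)(D\otimes\id_{km})\alpha(q)$ acting on $\alpha(q)\mathcal H^{km}$. On the other hand $\sigma^\E=(\B,\E\otimes_\A\mathcal H,D^\E,\gamma^\E)$, so $\ind([q],\sigma^\E)$ is the index of $qD^\E q$ on $q(\E\otimes_\A\mathcal H)^k$. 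The canonical unitary identification $\E^k\otimes_\A\mathcal H\cong \alpha(1_k)\mathcal H^{km}$ sending $(p_\E\boldsymbol a)\otimes h$ to $p_\E\boldsymbol a h$ intertwines these two Fredholm operators up to the bounded perturbation $\xi\otimes h\mapsto(\nabla\xi)h$ coming from the $\sigma$-connection, which does not change the Fredholm index; this gives equality of the two indices.

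For the second diagram, naturality of the HP pairing follows directly from the explicit formulas given before the proposition. The morphism $(\E,\nabla)$ induces $\E: HP_\bullet(\B)\to HP_\bullet(\A)$ via the matrix-trace formula $\E(b_0,\dots,b_n)=\tr\bigl(\alpha(b_0)\otimes\cdots\otimes\alpha(b_n)\bigr)$ and a dual map $\E: HP^\bullet(\A)\to HP^\bullet(\B)$ on cyclic cocycles obtained by precomposition with $\alpha$. That these are adjoint with respect to the cyclic pairing is immediate from a term-by-term comparison on normalized Hochschild chains; since both maps are chain maps, adjointness descends to cyclic and periodic cyclic (co)homology.

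The main technical obstacle lies in the first item: verifying that the identification $\E^k\otimes_\A\mathcal H\cong\alpha(1_k)\mathcal H^{km}$ is a unitary of pre-Hilbert spaces (so that the compressed operators on the two sides are self-adjoint on matching domains with matching gradings), and that the connection perturbation $(\nabla\xi)h$ is genuinely bounded relative to $D^\E$ so that Fredholm index is preserved. Both are routine once one unpacks the definitions, but they demand careful bookkeeping of the Hermitian structure on $\E\otimes_\A\mathcal H$ inherited from the Hermitian structure on $\E$ and the representation of $\A$ on $\mathcal H$. Once this adjointness is in place, the top two diagrams commute, and combining with the Connes theorem for the bottom diagram yields the full claim.
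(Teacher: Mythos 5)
The paper states this proposition without proof, so there is no in-text argument to compare yours against; I will assess the proposal on its own.

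Your argument is correct and supplies exactly what is needed. The third diagram is indeed Connes's index theorem and can be cited. For the first diagram, your identification of $\E^k\otimes_{\A}\mathcal H$ with $\alpha(1_k)\mathcal H^{km}$ is the right move, and indeed, under the projectional universal connection that morphisms in $\bf{NDG}$ are required to carry, $D^{\E}$ is literally $p(D\otimes\id)p$ on $p\mathcal H^m$, so compression by $q\in\M_k(\B)$ becomes compression by $\alpha(q)\in\M_{km}(\A)$ \emph{on the nose}, with no perturbation at all. The bounded-perturbation remark you append is therefore superfluous in the setting of the paper (it would be needed only to cover non-projectional $\sigma$-connections), but it does no harm and is anyway the reason the first diagram descends to $\rm{Sptr}^0_{\rm M}$. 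For the second diagram, your observation that $\E$ on cyclic cochains is precomposition with $\alpha$ (after passing to the matrix extension $\phi^{\sharp}$ of a cyclic cocycle $\phi$) makes the adjointness identity $\langle\E\phi,c\rangle = \langle\phi,\E c\rangle$ a chain-level tautology; one only has to note that this is compatible with $S$ and so passes to $HP$. In short: the proof is sound, and the only tightening I would suggest is to drop or relabel the perturbation step as being about well-definedness rather than a necessary ingredient of the commutativity.
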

\end{section} 
\section[Gluing local spin structures via Morita equivalence]{Gluing local spin structures on Riemannian manifolds via Morita equivalence}\label{glue}

We can apply the above theory to spectral triples on
Riemannian manifolds. By gluing local pieces
of spectral triples via Morita equivalence, we construct a so called
projective spectral triple, the Dirac operator of which was defined
in a formal sense by Mathai-Melrose-Singer \cite{Mathai}.

Let $X$ be a closed oriented Riemannian manifold of dimension $n$.
Suppose $X$ is spin or spin$^c$. Let $\Cl_n$ denote the complex
Clifford algebra of $\R^n$. In this paper we use the following
convention for the definition of Clifford algebras
$$\Cl_n:=<u\in \mathbb R^n| uv+vu=-2(u,v), \forall u,v\in\mathbb R^n>.$$
Decomposed by the parity of the degree, $\Cl_n=\Cl_n^0 \oplus \Cl_n^1$.
Write
\begin{equation}B_n=\left\{
\begin{array}{c} \Cl_n \cong M_{2^m}(\C) \\
\Cl_n^0 \cong M_{2^m}(\C)
\end{array}\right. \text{and}\quad
B_x=\left\{
\begin{array}{l} \Cl(T_x^*X)  \\
\Cl^0(T_x^*X)
\end{array}\right.
\begin{array}{l} \text{when}\ n=2m\ \text{is even}  \\
\text{when}\ n=2m+1\ \text{is odd.}
\end{array}
\end{equation}
Denote by $\Cl(X)$ and $B(X)$ the vector bundles over $X$
whose fibers at a point $x\in X$ are $\Cl(T_x^*X)$ and $B_x$. Let $S_n=\C^{2^m}$ be the standard spinor vector
space, and we fix a specific isomorphism
$$c : B_n \rightarrow \End_{\C}(S_n).$$
Let $\omega_n=i^{[{n+1\over 2}]}e_1\cdots e_n \in \Cl_n$.
When $n$ is odd, $B_n = \omega_n\Cl_n^1$. This indicates a
homomorphism $c:\Cl_n \rightarrow \End_{\C}(S_n).$ When $n$ is even,
$S_n=S_{n+}\oplus S_{n-}$ is graded by the eigenspaces of
$\omega_n$, which are invariant under the action of $\Spin(n)$.

Let $P_{\rm{Fr}}(X)$ and $P_{\Spin(^c)}(X)$ denote the
orthonormal oriented frame bundle and the principal
$\Spin(n)$ or $\Spin^c(n)$-bundle over $X$ respectively. Then
$$\Cl(X)=P_{\rm{Fr}}(X) \times_{\SO(n)} \Cl_n = P_{\Spin(^c)}(X) \times_{\Ad} \Cl_n.$$
The spinor bundle over $X$ is the associated $\Spin({^c})(n)$-bundle
$$S_X=P_{\Spin({^c})}(X) \times_{c} S_n.$$
The Clifford algebra bundle acts naturally on the spinor bundle, $c:
\Cl(X)\times_X S_X \rightarrow S_X$, which is given by
$$(p,\xi)\times (p,s) \mapsto (p, c(\xi)s), \quad \forall p\in P_{\Spin(^c)}(X), \xi\in
\Cl_n, s\in S_n.$$
When $n$ is even, $\omega_n$ induces a grading operator $\omega$ on $S_X=S_{X+}\oplus S_{X-}$.

Denote by $\D$ the Dirac operator on $S_X$. Let
$E$ be a Hermitian vector bundle over $M$ with a Hermitian
connection $\nabla^E$. Let $\A=\smooth(X),\  \B=\Gamma(B(X))$,
$\E=\Gamma(E)$. Then the well-known spin spectral triple
\begin{equation}
(\A, \mathcal H, D)=(\smooth(X),\Gamma(S_X),\D)
\end{equation}
is Morita equivalent to the following spectral triple with a
noncommutative algebra
\begin{equation}(\B, \mathcal H^{\E}, D^{\E})=(\Gamma(\End(E)), \Gamma(E\otimes S_X),
\D^E),
\end{equation}
via the finite Kasparov module $\E$ with $(\A,\mathcal H,
D)$-connection associated with $\nabla^E$. Here $\D^E$ denotes the
twisted Dirac operator on the vector bundle $E\otimes S_X$.

Now interesting things happen when a manifold has no spin$^c$
structure: The spinor bundle does not exist, and neither does the
spin spectral triple. However, as constructed later in this section, for any closed
oriented Riemannian manifold, not necessarily spin$^c$, there is a canonical noncommutative spectral triple:

\begin{defn}The \definition{projective spectral triple} of a closed oriented Riemannian manifold $M$ is defined to be
\begin{equation}(\A_{W_3},\mathcal H_{W_3},D_{W_3}):=(\Gamma(B(M)),
(1+*)\Omega(M),(d-d^*)(-1)^{\deg}),
\end{equation}
if $M$ is odd dimensional, and
\begin{equation}(\A_{W_3},\mathcal H_{W_3},D_{W_3},\gamma_{W_3}):=(\Gamma(B(M)), \Omega(M),(d-d^*)(-1)^{\deg},*(-1)^{{\deg(\deg+1)\over 2}-{n\over 4}}),
\end{equation}
if $M$ is even dimensional.
\end{defn}

We can consider that projective spectral triples are obtained by gluing local spin spectral triples in the following way:

Let $M$ be a closed oriented Riemannian manifold of dimension $n$,
not necessarily spin$^c$. Let $\{U_i\}$ be a good covering of $M$.
Then on each local piece we have the principal $\Spin(n)$-bundle
$P_i=P_{\Spin}(U_i)$, the associated spinor bundle $S_i=S_{U_i}$,
the spin connection $\nabla_i$, and the Dirac operator
$\slashed{D}_i$. Over each intersection $U_{ij}=U_i\cap U_j\neq
\emptyset$, there is up to $\pm1\in \Spin(n)$ a unique homeomorphism
of principal bundles
$$\alpha_{ij}: P_i|_{U_{ij}}\rightarrow P_j|_{U_{ij}}, \text{ such that }
\alpha_{ij}(p_i g)=\alpha_{ij}(p_i)g,\ \forall p_i\in P_i|_{U_{ij}},\
g\in \Spin(n).$$ This homeomorphism induces up to $\pm1$ a Clifford
module homomorphism $\alpha_{ij}:S_i|_{U_{ij}}\rightarrow
S_j|_{U_{ij}}$,
$$\alpha_{ij}(p,s)=(\alpha_{ij}(p),s),\quad \forall p\in P_i|_{U_{ij}}, s\in S_n.$$
For each triple overlap $U_{ijk}=U_i\cap U_j\cap U_k\neq \emptyset$,
write
$$\sigma_{ijk}=\alpha_{ki}\circ\alpha_{jk}\circ\alpha_{ij}|_{U_{ijk}},$$ then $\{\sigma_{ijk}\}$ represents a
\v Cech cocycle in $\check{H}^2(M,\Z_2)$, and the corresponding
singular class $w_2(M)\in H^2(M,\Z_2)$ is the second Stiefel-Whitney
class. Let $L_{ij}=\Hom_{\Cl(U_{ij})}(S_i|_{U_{ij}}, S_j|_{U_{ij}})$
denote the line bundle of the Clifford module homomorphisms of $S_i$
and $S_j$ over $U_{ij}$, then $\alpha_{ij}$ is the canonical section
of $L_{ij}$. $\{L_{ij}\}$ forms a gerbe of line bundles over $M$
characterized by the Dixmier-Douady class $\delta(B(M))=W_3(M)$ (the
third integral Stiefel-Whitney class). We refer \cite{Bouw} for
(twisted) K-theory of bundle gerbes.

If $M$ is spin, then for each $U_{ij}$ there is $\beta_{ij}\in
\Z_2$ such that $\{\beta_{ij}\alpha_{ij}\}$ satisfies the cocycle
condition. That means the local spinor bundles $S_{ij}$ can be glued
together as a global spinor bundle via the Clifford module
isomorphisms $\beta_{ij}\alpha_{ij}$. The difference between two
different choices of such collection $\{\beta_{ij}\}$ is a cocycle in
$\check{H}^1(M,\Z_2)$ which parameterizes distinct spin structures on
$M$.

If $M$ is spin$^c$, then for each $U_{ij}$ there is $\beta_{ij}\in
\smooth(U_{ij},U(1))$ such that $\{\beta_{ij}\alpha_{ij}\}$
satisfies the cocycle condition. That means that the local spinor bundles
$S_{ij}$ can be glued together as a global spinor bundle via the
Clifford module isomorphisms $\beta_{ij}\alpha_{ij}$. The collection $\{\beta_{ij}^2\}$
also satisfies the cocycle condition, and the \v Cech cocycle
$\{\beta_{ij}^2\}\in \check{H}^1(M,U(1))$ corresponds to the
canonical line bundle $\sL$ of a spin$^c$ structure. The difference
between two different choices of such collection $\{\beta_{ij}\}$ is a cocycle in
$\check{H}^1(M,U(1))$ which parameterizes distinct spinor bundles on
$M$.

Denote by $\smooth(\bar{U_i})$ the space of smooth functions on
$U_i$ that can be extended to a smooth function on a small open
neighborhood $V_i$ of $\bar{U_i}$, and likewise denote by $\Gamma(\bar{U_i},\cdot)$ for extendable smooth sections. Take $\E_i=\Gamma(\bar{U_i},S_i)$, then
the ``local spin spectral triple''

\begin{equation}\label{local_sptr1}
(\A_i, \mathcal H_i,
D_i)=(\smooth(\bar{U_i}),\Gamma(\bar{U_i},S_i),\D_i)
\end{equation}
is Morita equivalent to the local spectral triple
\begin{equation}\label{local_sptr2}(\B_i, \mathcal H_i^{\E_i}, D_i^{\E_i})=(\Gamma(\bar{U_i},B(U_i)), \Gamma(\bar{U_i}, S_i\otimes S_i),
\D_i^{S_i}).
\end{equation}

\begin{remark}We use here the triple (\ref{local_sptr1}) to formulate the spin structure of  the open subset $U_i$, but it is by definition not a spectral triple, for the compact resolvent condition fails. However, it can naturally act on the relative K-theory for the pair of spaces $(V_i,U_i)$ or $(Y,\iota(U_i))$ to get an index, where $Y$ is any compact Riemannian spin manifold that admits an isometric embedding $\iota: V_i \to Y$. By the excision property, the choice of $V_i$ is irrelevant. In this sense the triple (\ref{local_sptr1}) represents a relative K-cycle. One may also think of the standard treatment using the nonunital spectral triple $(\smooth_c(U_i), L^2(U_i, S_i), \D_i)$ with the algebra of smooth functions with compact support. See Gayral-Gracia-Bond\'ia-Iochum-Sch\"ucker-V\'arilly \cite{Moyal} for a set of axioms for nonunital spectral triples which is proposed to set up the notion of noncompact noncommutative spin manifolds. This, however, will cause some subtleties when considering Morita equivalence and the smoothness condition.
\end{remark}

Because the collection of maps $\{\alpha_{ij}\otimes\alpha_{ij}\}$
satisfy the cocycle condition, the vector bundles $S_i\otimes S_i$
and Dirac operators $D_i^{\E_i}$ can be glued together to form a
vector bundle $N$ over $M$ and a Dirac operator $D_N$ on $N$, so that
$(\B|_{U_i},\Gamma(M, N)|_{U_i},D_N|_{U_i})$ are unitarily
equivalent to $(\B_i, \mathcal H_i^{\E_i}, D_i^{\E_i})$, where
$\B=\Gamma(M, B(M))$. Thus we succeed to construct a globally well-defined spectral
triple $(\B, \Gamma(N), D_N)$ on $M$.

\begin{prop} The global vector bundle $N$ is isomorphic to $B(M)$,
and $\Gamma(N)$ is isomorphic to $\Gamma(B(M))$ as both
$\Gamma(B(M))$-modules and pre-Hilbert spaces.
\end{prop}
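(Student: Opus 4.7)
The plan is to exhibit a canonical, Clifford-equivariant isomorphism $\phi_i\colon S_i\otimes S_i \xrightarrow{\cong} B(U_i)$ on each chart of the good cover, to verify that the $\phi_i$ are compatible with the respective gluing data, and finally to check that the resulting global bundle isomorphism respects the left algebra action and the inner product. For the first step, I would invoke the existence of a $\Spin(n)$-invariant nondegenerate bilinear pairing $\beta_n\colon S_n\otimes S_n\to \C$ on the standard spinor space (its symmetry depends on $n\bmod 8$, but existence is automatic and the one-dimensional space of such forms fixes $\beta_n$ up to scalar). Transporting $\beta_n$ fibrewise yields a $\Spin(n)$-equivariant isomorphism $\beta_i\colon S_i\to S_i^{*}$; composing $1\otimes \beta_i$ with the tautological identification $S_i\otimes S_i^{*}=\End(S_i)=B(U_i)$ produces $\phi_i$, which by construction intertwines the Clifford-multiplication action of $\B_i=\Gamma(\bar U_i,B(U_i))$ on the first tensor factor with left multiplication of $B(U_i)$ on itself.

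For the second step, I would check commutativity on every double overlap of the square
$$
\begin{CD}
S_i\otimes S_i @>\alpha_{ij}\otimes \alpha_{ij}>> S_j\otimes S_j \\
@V\phi_i VV @VV\phi_j V \\
B(U_i) @>\Ad \alpha_{ij}>> B(U_j).
\end{CD}
$$
The essential point is that the $\pm 1$ ambiguity in the local lift $\alpha_{ij}$ enters the left column as $\alpha_{ij}\otimes \alpha_{ij}$ and the right column as $\Ad \alpha_{ij}$, so it appears quadratically on both sides; consequently the $W_3$-gerbe obstruction is absent from both constructions. Since $\alpha_{ij}$ is a $\Spin(n)$-intertwiner it preserves $\beta_n$, and a direct diagram chase — essentially the identity $\beta_j(\alpha_{ij}s,\alpha_{ij}u)=\beta_i(s,u)$ — shows the square commutes. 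Patching together yields the asserted global bundle isomorphism $N\cong B(M)$.

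For the third step, the induced global map $\Gamma(M,N)\to \Gamma(M,B(M))$ is $\A_{W_3}$-linear because each $\phi_i$ intertwines the left $\B_i$-actions; this is local, so the global statement follows from the bundle isomorphism. For the pre-Hilbert structure, one normalizes $\beta_n$ so that at each fibre the tensor Hermitian metric on $S_n\otimes S_n$ matches the rescaled Hilbert-Schmidt metric $(T,T')\mapsto 2^{-m}\tr(T^{*}T')$ on $\End(S_n)=B_n$. This is a routine fibrewise linear-algebra check using the unitarity of the $\Spin(n)$-representation on $S_n$; integrating against the Riemannian volume then transports it into the required isometry of pre-Hilbert spaces $\Gamma(M,N)\cong \Gamma(M,B(M))$.

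The main obstacle is really a bookkeeping one in the third step: $\beta_n$ is $\C$-linear whereas the Hermitian inner product on $S_n$ is conjugate-linear, so some care is required to select the correct conjugation conventions (equivalently, to combine $\beta_n$ with a compatible charge-conjugation operator on $S_n$) before the tensor Hermitian metric matches the Hilbert-Schmidt metric on the nose. Once this normalization is fixed the rest of the proof is routine, and the sign cancellation observed in the second step guarantees that no global obstruction is introduced.
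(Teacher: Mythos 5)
Your proof follows essentially the same plan as the paper's: a Schur-type uniqueness argument fixes an equivariant duality on the spinor fibre, which identifies each $S_i\otimes S_i$ with $\End(S_i)\cong B(U_i)$, and one then checks compatibility with the transition maps $\alpha_{ij}$; the paper packages the duality as a unitary $B_n$-module isomorphism $T_n\colon S_n\to S_n^*$ (with $S_n^*$ carrying the twisted module structure $\gamma^*$) applied to the factor on which $\B$ acts, while you use a $\Spin(n)$-invariant bilinear pairing on the other factor, and these encode the same information since $T_n$ is in effect a charge-conjugation operator composed with the Riesz identification coming from the Hermitian form. One minor slip that does not affect the argument: for even $n$ the spinor space decomposes into two half-spinor representations and the space of $\Spin(n)$-invariant bilinear forms on it is two-dimensional rather than one-dimensional, but any nondegenerate invariant choice intertwines the $\alpha_{ij}$ and suffices.
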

\begin{proof} Let $S_n^*$ be the dual vector space of the standard
spinor vector space $S_n$. We endow $S_n^*$ with a left $B_n$-module
structure by $\gamma^*(b)f_x:=f_{\bar{b}x}=(x,\bar{b}^*\cdot)$, for
all $x\in S_n, b\in B_n$, where $\bar{b}$ is the complex conjugate
of $b$, and $b^*$ is the adjoint of $b\in B_n\cong M_{2^m}(\C)$.
Since $B_n$ is a simple algebra, up to a scalar there is a unique
$B_n$-module isomorphism from $S_n$ to $S_n^*$. We fix one specific
unitary $B_n$-module isomorphism $T_n: S_n\rightarrow S_n^*$. Then
$T_n$ induces a Clifford module isomorphism of bundles $T:S_i
\rightarrow S_i^*$, given by $(p,s)\mapsto (p, T_n s)$, for all
$p\in P_i, s\in S_n$. Let $S_i^*=P_i \times_{\gamma^*} S_n^*$, and
let $\alpha_{ij}^*:S_i^*\rightarrow S_j^*$ denote the Clifford
module isomorphism given by $\alpha_{ij}^*(p,f))=(\alpha_{ij}p,f)$,
for all $f\in S_i^*$. The mappings $T$ on $U_i$ and $U_j$ are
compatible, namely the diagram below commutes on $U_{ij}$.
$$\xymatrix{S_i
   \ar[d]_{\alpha_{ij}} \ar[r]^{T} & S_i^* \ar[d]^{\alpha_{ij}^*} \\
  S_j \ar[r]^{T} & S_j^*  }$$
 Then one can glue the bundles $S_i^*\otimes S_i
\cong \End_{\C}(S_i^*)$ together as a global bundle $B(M)$ via the
maps $\alpha_{ij}^* \otimes \alpha_{ij}$, and $T$ induces an
isomorphism from $N$ to $B(M)$. Then it is easy to verify the
proposition.
\end{proof}

\begin{cor}\label{projectivesptr}When $M$ is odd dimensional, one can take the bundle
$N=(1+*)\bigwedge^*(T^*_{\C}M)$, then the spectral triple $(\B, \Gamma(N),D_N)$ is the projective spectral triple
\begin{equation}(\A_{W_3},\mathcal H_{W_3},D_{W_3})=(\B,
(1+*)\Omega(M),(d-d^*)(-1)^{\deg});\end{equation} and when $M$ is
even dimensional, take $N=\bigwedge^*(T^*_{\C}M)$, then the spectral triple $(\B, \Gamma(N),D_N)$ with the grading on $\Gamma(N)$ obtained from the grading on $S_i$ is the even projective spectral triple
\begin{equation}(\A_{W_3},\mathcal H_{W_3},D_{W_3},\gamma_{W_3})=(\B, \Omega(M),(d-d^*)(-1)^{\deg},*(-1)^{\deg(\deg+1)/2-n/4}),\end{equation}
and its center $(\A, \mathcal H_{W_3},D_{W_3},\gamma_{W_3})$ is
unitarily equivalent to the spectral triple for Hirzebruch
signature. For any $a\in \A$, the commutator $[D_{W_3}, a]$ is just
the right Clifford action of $da$ on $\mathcal H_{W_3}$.
\end{cor}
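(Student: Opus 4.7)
The plan is to use the previous proposition (which identifies $N\cong B(M)$) together with the symbol isomorphism $\Cl(T^*M)\cong\bigwedge^*T^*_\C M$ to transport everything to the exterior bundle, then to compute the twisted Dirac operator locally and track the signs to match the stated formula. The four tasks are: (A) the isomorphism of Hilbert spaces; (B) the Dirac operator; (C) the grading and unitary equivalence with $\varsigma_2$; (D) the commutator formula.

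For (A), when $n=2m$ the symbol map directly gives $N\cong\Cl(T^*M)\cong\bigwedge^*T^*_\C M$. When $n=2m+1$, $B(M)=\Cl^0(T^*M)$, and one uses the central involution $\omega_n=i^{(n+1)/2}e_1\cdots e_n$: its (left or right) multiplication on $\Cl$ corresponds, degree by degree, to $\pm*$ on $\bigwedge^*$ (signs absorbed into the normalization making $*^2=1$), so the projector $(1+\omega_n)/2:\Cl\to\Cl^0$ is identified with $(1+*)/2$, yielding $N\cong(1+*)\bigwedge^*T^*_\C M$. For (B) I work locally on $U_i$: the twisted Dirac $\D_i^{S_i}=\sum_k(1\otimes c(e^k))\nabla^{S_i\otimes S_i}_{e_k}$ has Clifford action on the right tensor factor, which under $T\otimes 1:S_i\otimes S_i\to S_i^*\otimes S_i\cong\End(S_i)\cong\Cl(T^*U_i)$ followed by the symbol map becomes right Clifford multiplication on $\Cl(T^*U_i)$; on $\Omega^*(U_i)$ this is $c_R(v)\omega=(-1)^{\deg\omega}(v\wedge\omega+\iota_{v^{\sharp}}\omega)$. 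Summing over $k$ and using $\sum_k(e^k\wedge+\iota_{e_k})\nabla_{e_k}=d-d^*$ then gives $D_N\omega=(-1)^{\deg\omega}(d-d^*)\omega$, that is $D_N=(d-d^*)(-1)^{\deg}$, which is self-adjoint with square $\Delta$ since $(d-d^*)^*=-(d-d^*)$.

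For (C), the spinor chirality $\omega$ on $S_n$ (for $n=2m$) transports via $T\otimes 1$ and the symbol map to an involution on $\bigwedge^*T^*_\C M$ that, after tracking the prefactor $i^{(n+1)/2}$ and the symbol-of-volume-form computation, equals $*(-1)^{\deg(\deg+1)/2-n/4}$. For unitary equivalence of the center $(\A,\mathcal H_{W_3},D_{W_3},\gamma_{W_3})$ with $\varsigma_2$, take $U=(-1)^{\deg(\deg+1)/2}$ on $\Omega(M)$: a direct calculation shows $UdU^{-1}\omega=(-1)^{\deg\omega}d\omega$ and $Ud^*U^{-1}\omega=-(-1)^{\deg\omega}d^*\omega$, hence $U(d+d^*)U^{-1}=(d-d^*)(-1)^{\deg}$, and the same $U$ intertwines the two grading operators (up to a convention-dependent overall sign). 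Finally for (D), since $a\in\smooth(M)$ commutes with $(-1)^{\deg}$, $[D_{W_3},a]=[d-d^*,a](-1)^{\deg}$; Cartan's identities $[d,a]=da\wedge$ and $[d^*,a]=-\iota_{(da)^{\sharp}}$ then yield $[D_{W_3},a]\omega=(-1)^{\deg\omega}(da\wedge\omega+\iota_{(da)^{\sharp}}\omega)=c_R(da)\omega$, which is precisely the right Clifford action of $da$ under the identification $\mathcal H_{W_3}\cong\Gamma(\Cl(T^*M))$.

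The main obstacle is the bookkeeping of signs, especially in (C) where reconciling the spinor chirality with the Hodge star requires careful attention to how the constant $i^{(n+1)/2}$ and the symbol-map conventions combine modulo $4$; this is where matching the precise grading $*(-1)^{\deg(\deg+1)/2-n/4}$ demands the most care, and where the odd-dimensional identification via the projection by $\omega_n$ also has to be sign-tracked to make $(1+*)\Omega(M)$ appear naturally.
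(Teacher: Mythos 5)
Your proposal follows what the paper (implicitly) intends: the corollary is stated without proof as a consequence of the preceding proposition identifying $N\cong B(M)$, and your four-step program (symbol isomorphism, local Dirac computation, grading transport, commutator) is exactly the computation suppressed there. One small sign slip in step (C): with $U=(-1)^{\deg(\deg+1)/2}$ a direct check gives $UdU^{-1}\omega=(-1)^{\deg\omega+1}d\omega$ and $Ud^{*}U^{-1}\omega=(-1)^{\deg\omega}d^{*}\omega$, so $U(d+d^{*})U^{-1}=-(d-d^{*})(-1)^{\deg}=-D_{W_3}$; this is still unitarily equivalent to $D_{W_3}$ (conjugate once more by the grading), but if you want the conjugation to land on $+D_{W_3}$ outright, the correct unitary is $U'=(-1)^{\deg(\deg-1)/2}$, for which $U'dU'^{-1}=d(-1)^{\deg}$ and $U'd^{*}U'^{-1}=-d^{*}(-1)^{\deg}$. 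With that adjustment the rest of your argument goes through.
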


\begin{thm}If $M$ is spin$^c$, the projective spectral triple
on $M$ is Morita equivalent to the spin spectral triple.
\end{thm}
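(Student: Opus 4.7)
The plan is to produce an explicit Morita-equivalence bimodule between the spin spectral triple $\sigma=(\smooth(M),\Gamma(M,S_M),\D)$ and the projective spectral triple $\sigma_{W_3}=(\B,\Gamma(M,N),D_N)$. All the local ingredients are already in place: earlier in this section the local triples $(\A_i,\mathcal H_i,D_i)$ and $(\B_i,\mathcal H_i^{\E_i},D_i^{\E_i})$ were shown to be Morita equivalent via $\E_i=\Gamma(\bar U_i,S_i)$. The task therefore reduces to showing that these local equivalences patch coherently, and this is precisely what the spin$^c$ hypothesis provides.

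The spin$^c$ structure furnishes scalars $\beta_{ij}\in\smooth(U_{ij},\U(1))$ making $\{\beta_{ij}\alpha_{ij}\}$ a cocycle, and gluing the local $S_i$ through this cocycle produces the global spinor bundle $S_M$, Dirac operator $\D$, and spin$^c$ connection $\nabla^{S_M}$. The global bimodule I propose is $\E=\Gamma(M,\bar S_M)$, the sections of the complex-conjugate spinor bundle, viewed as a right $\smooth(M)$-module and as a left $\B$-module via the canonical identification $\B\cong\Gamma(M,\End(S_M))$ (valid because $M$ is spin$^c$). The pointwise Hermitian inner product makes $\E$ a full finitely generated projective Hilbert $\smooth(M)$-module with $B_{\smooth(M)}(\E)=\B$, hence an equivalence bimodule, and the conjugate spin$^c$ connection supplies a $\sigma$-connection on $\E$ since the Leibniz defect pairs naturally with $[\D,a]=c(da)$.

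The key computation is the identification $\mathcal H^\E=\E\otimes_{\smooth(M)}\Gamma(M,S_M)=\Gamma(M,\bar S_M\otimes S_M)\cong\Gamma(M,\End(S_M))\cong\Gamma(M,N)$. On transition cocycles this reads
\[
\{\bar\beta_{ij}\alpha_{ij}\otimes\beta_{ij}\alpha_{ij}\}=\{|\beta_{ij}|^2\,\alpha_{ij}\otimes\alpha_{ij}\}=\{\alpha_{ij}\otimes\alpha_{ij}\},
\]
which is exactly the cocycle defining $N$, while the identification $\bar S_M\otimes S_M\cong\End(S_M)$ is the canonical Hermitian one. Under this identification, $D^\E(\bar s\otimes h)=\bar s\otimes\D h+(\nabla^{\bar S_M}\bar s)h$ restricts on each $U_i$ to the local twisted Dirac $D_i^{\E_i}$ (up to the local $T$-identification used in the proposition preceding Corollary \ref{projectivesptr}), and therefore agrees globally with $D_N$.

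The main obstacle I anticipate is that the naive choice $\E=\Gamma(M,S_M)$ fails: the tensor $S_M\otimes S_M$ picks up an extra twist by the determinant line bundle $\sL$ with cocycle $\{\beta_{ij}^2\}$, and is not globally isomorphic to $N$ unless $\sL$ is trivial (the pure spin case). Passing to $\bar S_M$ (equivalently $S_M^*$ with the Hermitian identification) is the correct remedy because $\bar\beta_{ij}\beta_{ij}=1$, reflecting the intrinsic fact that $N\cong B(M)\cong\End(S_M)\cong\bar S_M\otimes S_M$ is canonical, while $S_M\otimes S_M$ depends on the chosen spin$^c$ structure.
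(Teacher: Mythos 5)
Your proposal takes essentially the same route as the paper, packaged into a single Morita equivalence: since $\bar S_M\cong S_M^*\cong \sL^{-1}\otimes S_M$ as Clifford modules, your equivalence bimodule $\Gamma(M,\bar S_M)$ is precisely the composition of the two bimodules the paper uses in its chain $(\A,\mathcal H,D)\sim(\A,\mathcal H^{\sL^{-1}},D^{\sL^{-1}})\sim(\B,(\mathcal H^{\sL^{-1}})^S,(D^{\sL^{-1}})^S)$, and your observation that $S_M\otimes S_M$ acquires the unwanted $\sL$-twist is exactly why the paper inserts the $\sL^{-1}$ step. One bookkeeping slip: the transition cocycle for $\bar S_M$ is $\overline{\beta_{ij}\alpha_{ij}}=\bar\beta_{ij}\,\bar\alpha_{ij}$, not $\bar\beta_{ij}\,\alpha_{ij}$, so passing from $\bar\alpha_{ij}\otimes\alpha_{ij}$ to the $\alpha_{ij}\otimes\alpha_{ij}$ that defines $N$ requires invoking the Clifford-module isomorphism $T_n\colon S_n\to S_n^*$ from the proposition preceding Corollary \ref{projectivesptr}; this fixes rather than undermines the argument.
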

\begin{proof}Let $S$ be the global spinor bundle over $M$. There exist local half line
bundles $\sL^{1/2}_i$, such that $\sL^{1/2}_i$ are characterized by
$\{\beta_{ij}\}$ as local transition functions, $\sL^{1/2}_i\otimes
\sL^{1/2}_i=\sL|_{U_i}$, and $S|_{U_i}=\sL^{1/2}_i\otimes S_i$.
There also exists a hermitian connection $\nabla_i'$ on $\sL^{1/2}_i$
such that the Spin$^c$-connection
$\nabla|_{U_i}=1\otimes\nabla_i+\nabla_i'\otimes 1$, where $\nabla_i$
is the Spin-connection on $S_i$. Then follows the Morita
equivalence of the spectral triples
$$(\smooth(M),\Gamma(M, S),\D)=(\A,\mathcal H,D) \sim (\A,\mathcal H^{\sL^{-1}},D^{\sL^{-1}})
$$$$\sim (\B,(\mathcal
H^{\sL^{-1}})^S,(D^{\sL^{-1}})^S)=(\A_{W_3},\mathcal
H_{W_3},D_{W_3}).$$
\end{proof}

We see that the projective spectral triple is defined for any closed
oriented Riemannian manifold regardless of whether the manifold is
spin$^c$ or not. The projective spectral triple depends only on the
metric and orientation of $M$ and does not depend on the choice of
the local spinor bundles $S_i$.

\begin{section}{Kasparov's spectral triple}\label{Kasparov}

In Introduction we mentioned that Kasparov's fundamental class, the Dirac element \cite{Kasparov}, is essentially an even spectral triple with a $\Z_2$-graded underlying algebra, and claimed that the projective spectral triple is in fact Morita equivalent to Kasparov's spectral triple. In this section we see in details how these two spectral triples are related by Morita equivalence.

It is very easy to make sense of even spectral triples with $\mathbb Z_2$-graded algebras and Morita equivalence between them. Replacing all the commutator relations for even spectral triples by graded commutator relations, the definition of even spectral triples with $\Z_2$-graded algebras will be the same as that of even spectral triples with ungraded algebras. Similarly, we can generalize $\sigma$-connections to $\sigma$-supper-connections. To be compatible with graded commutator relations, $\sigma$-supper-connections are required to be
odd operators. Two $\mathbb Z_2$-graded algebras $\A_1$ and $\A_2$ are Morita equivalent through a $\Z_2$-graded $\A_2$-$\A_1$-bimodule $\E$ with grading $\gamma_{21}$, if $\A_1$ and $\A_2$ as ungraded algebras are Morita equivalent through $\E$ as an ungraded module and
 $\A_2\cong \End_{\A_1}(\E)$ as $\Z_2$-graded algebras. Suppose $\sigma=(\A_1,\mathcal{H}_1, D_1, \gamma_1)$ is an even spectral triple with $\Z_2$-graded algebra $\A_1$, $\E$ is  a $\Z_2$-graded $\A_2$-$\A_1$-bimodule with grading $\gamma_{21}$, and $\nabla$ is a $\sigma$-super-connection  on $\E$, we define
 $$D_1^{\E}(\xi\otimes_{\A_1}h)=  \gamma_{21} \xi\otimes_{\A_1} D_1h + \gamma_{21}(\nabla\xi)h,\ \forall h\in \mathcal{H}_1, \xi\in \E,$$
 $$\gamma_1^{\E}= \gamma_{21}\otimes_{\A_1}\gamma_1.$$
Then we can straightforwardly extend the notion of Morita equivalence of spectral triples to the case of even spectral triples with $\Z_2$-graded algebras.

Recall that the projective spectral triple for an even dimensional closed Riemannian manifold $M$ is an even spectral triple
$$(\A_1,\mathcal{H}_1, D_1, \gamma_1)=(\B, \Omega(M), (d-d^*)(-1)^{\deg}, *(-1)^{\deg(\deg+1)/2-\dim M/4}),$$
where $\B$ is the algebra of smooth sections of the Clifford bundle of $M$ with trivial grading.

We define {\em a variant of Kasparov's spectral triple} to be the even spectral triple with a $\mathbb Z_2$-graded algebra
$$(\A_2, \mathcal{H}_2, D_2, \gamma_2)=(\B_{\gr}, \Omega(M), \sqrt{-1}(d-d^*), (-1)^{\deg}),$$
where $\B_{\gr}$ is the algebra of smooth sections of the Clifford bundle of $M$ graded by the degree of its elements.

\begin{prop} $(\A_1,\mathcal{H}_1, D_1, \gamma_1)$ and $(\A_2, \mathcal{H}_2, D_2, \gamma_2)$ are Morita equivalent.
\end{prop}
\begin{proof} First note that $(\A_1,\mathcal{H}_1, D_1, \gamma_1)$ is unitarily equivalent to $(\A_1,\mathcal{H}_1, D_1', \gamma_1)$, where $D_1'=\sqrt{-1}D_1 \gamma_1$.
Then the Morita equivalence between $(\A_1,\mathcal{H}_1, D_1, \gamma_1)$ and $(\A_2, \mathcal{H}_2, D_2, \gamma_2)$ will be given by a graded $\A_2$-$\A_1$-bimodule
$\E_{21}$ with grading $\gamma_{21}$, together with a super-connection $\widehat{\nabla}$ on $\E_{21}$, such that
$$(\A_2, \mathcal{H}_2, D_2, \gamma_2)=(\End_{\A_1}(\E_{21}), \E_{21}\otimes_{\A_1}\mathcal{H}_1, D_1'^{\E_{21}}, \gamma_{21}\otimes_{\A_1} \gamma_1).$$
The construction of $(\E_{21},\widehat{\nabla})$ is as follows.

Denote by $c_L$ and $c_R$ the left and right Clifford action of $\B$ on $\Omega(M)$ respectively. $\B$ is a complex algebra generated by $\Gamma(M, T^*_{\R} M)$. For any $\alpha \in \Gamma(M, T^*_{\R} M)$, and $\omega \in \Omega^k(M)$, we have
\begin{equation}\label{AA}c_L(\alpha) \omega= \alpha\wedge \omega -\iota(\alpha) \omega,\end{equation}
\begin{equation}\label{BB}c_R(\alpha) \omega= (-1)^k(\alpha\wedge \omega +\iota(\alpha) \omega),\end{equation}
where $\iota$ is the contraction determined by the Riemannian metric $g$. Note that $c_L(\alpha)c_R(\beta)=c_R(\beta)c_L(\alpha)$ and that
\begin{equation}\label{CC}*(a\wedge \omega)=(-1)^k \iota(\alpha)(*\omega),\end{equation}
\begin{equation}\label{DD}*(\iota(\alpha) \omega)= (-1)^{k+1} \alpha \wedge (*\omega).\end{equation}

Let $\E_{21}:=\Omega(M)$ be the $\B_{\gr}$-$\B$-bimodule with actions of $\B_{\gr}$ and $\B$ being the left and right Clifford action respectively.
Now we consider three different gradings $\gamma_1$, $\gamma_{21}$ and $\gamma_2$ on $\Omega(M)$. We define
$$\gamma_{21}(\omega):=(-1)^{k(k-1)/2-n/4}*\omega,\quad \forall \omega \in \Omega^k(M),$$
where $n=\dim M$ is an even number.
One can easily check that $\gamma_1^2=\gamma_2^2=\gamma_{21}^2=\id.$
By (\ref{AA})(\ref{BB})(\ref{CC})(\ref{DD}) we have
\begin{equation}\label{EE}c_L(\alpha) \gamma_1(\omega)= \gamma_1(c_L(\alpha)\omega),\end{equation}
\begin{equation}\label{FF}c_R(\alpha) \gamma_{21}(\omega)= \gamma_{21}(c_R(\alpha)\omega).\end{equation}
We consider $\Omega(M)\xrightarrow{\cong} \E_{21}\otimes_{\B} \mathcal{H}_1$ via
$\omega\mapsto \omega\otimes_{\B} 1,$
and this is an isomorphism of left $\B$-modules.
By (\ref{EE})(\ref{FF}), $\gamma_{21}\otimes_{\B}\gamma_1$ is well-defined. Hence
$$(\gamma_{21}\otimes_{\B}\gamma_1)\omega = \gamma_{21}(\omega)\otimes_{\B}\gamma_1(1)$$
$$=(-1)^{k(k-1)/2-n/2}(*\omega) \otimes_{\B}(e_1\wedge\cdots \wedge e_n)$$
$$=(-1)^{k(k-1)/2-n/2}(*\omega)\otimes_{\B}c_L(e_1)\cdots c_L(e_n) 1$$
$$=(-1)^{k(k-1)/2-n/2}  c_R(e_n)\cdots c_R(e_1)(*\omega)\otimes_{\B} 1$$
$$=(-1)^{k(k-1)/2-n/2}(-1)^{(n-k)(n-k+1)/2}(**\omega)\otimes_{\B} 1$$
$$=(-1)^k \omega \otimes_{\B} 1=\gamma_2(\omega).$$
Again by (\ref{AA})(\ref{BB})(\ref{CC})(\ref{DD}) we have
$$c_L(\alpha) \gamma_{21}(\omega)= -\gamma_{21}(c_L(\alpha)\omega),$$
so the grading $\gamma_{21}$ on $\E_{21}$ is compatible with the grading on $\B_{\gr}$, and hence $\B_{\gr}= \End_{\B}(\E_{21})$ as $\Z_2$-graded algebras.

Let $\nabla$ be the connection on $\Omega(M)$ induced by the Levi-Civita connection on $T_{\R}^*M$. Note that
if $$\nabla(\omega)=\sum_i \nu_i\otimes\mu_i,$$ for some $\mu_i\in \Omega^k(M), \nu_i\in \Omega^1(M)$, then $d\omega=\sum_i \nu_i\wedge \mu_i$, $d^* \omega=-\sum_i \iota(\nu_i) \mu_i$, therefore
$$(d+d^*)\omega=\sum_i c_L(\nu_i) \mu_i,$$
$$(d-d^*)\omega=(-1)^k \sum_i c_R(\nu_i) \mu_i,$$
namely$$d+d^*=c_L\circ \nabla,$$
$$ D_1=(-1)^k (d-d^*)=c_R \circ \nabla.$$

Define $\widehat{\nabla}: \E_{21}\mapsto \E_{21}\otimes_{\B} B(\mathcal{H}_1)$  by
$$(\widehat{\nabla}(\omega))(h)=\sqrt{-1}\sum_i\gamma_1(\mu_i)\otimes_{\B} c_R(\nu_i)h,$$
that is
$$\widehat{\nabla}(\omega)=\sqrt{-1}\ c_R \circ \nabla(\gamma_1(\omega)).$$
So $\widehat{\nabla}$ is a $(\A_1,\mathcal{H}_1, D_1', \gamma_1)$-connection, and we have
$$D_1'^{\E_{21}}(\omega)=D_1'^{\E_{21}}(\omega \otimes_{\B} 1)=(\gamma_{21}\otimes_{\B}\id)(\widehat{\nabla}(\omega))(1)+\gamma_{21}(\omega) \otimes_{\B}D_1'(1)$$
$$=\sqrt{-1}(\gamma_{21}\otimes_{\B}\id) \sum_i\gamma_1(\mu_i)\otimes_{\B} c_R(\nu_i)1+ 0$$
$$=\sqrt{-1} (-1)^k \sum_i c_R(\nu_i) \mu_i=\sqrt{-1}(d-d^*)\omega.$$
So  $D_2=D_1'^{\E_{21}}$.

Therefore we have the Morita equivalence
$$(\A_2, \mathcal{H}_2, D_2, \gamma_2)=(\End_{\A_1}(\E_{21}), \E_{21}\otimes_{\A_1}\mathcal{H}_1, D_1'^{\E_{21}}, \gamma_{21}\otimes_{\A_1} \gamma_1)$$
$$\sim (\A_1, \mathcal{H}_1, D_1, \gamma_1).$$
\end{proof}

Now let $D_3=d+d^*$ and $\gamma_3=\gamma_2=(-1)^{\deg}$. Note that for even dimensional manifolds, $$*^{-1}\sqrt{-1}(d-d^*)* =(d+d^*)(-1)^{\deg+1/2}=\sqrt{-1}D_3\gamma_3.$$
If we write $\gamma_3=\left[\begin{matrix} \id &
0\\0 & -\id \end{matrix}\right]$ and $D_3=\left[\begin{matrix} 0 & T
\\ T^* & 0 \end{matrix}\right]$, then
$$*^{-1} D_2 *=\sqrt{-1}D_3\gamma_3=\left[\begin{matrix} 1 & 0
\\ 0 & \sqrt{-1} \end{matrix}\right] D_3 \left[\begin{matrix} 1 & 0
\\ 0 & -\sqrt{-1} \end{matrix}\right],$$
so we have
\begin{equation}\label{HH}D_3= \left[\begin{matrix} 1 & 0
\\ 0 & -\sqrt{-1} \end{matrix}\right] *^{-1} D_2 * \left[\begin{matrix} 1 & 0
\\ 0 & \sqrt{-1} \end{matrix}\right].\end{equation}

Let $C_{\gr}$ be a $\Z_2$-graded complex algebra generated by $\Gamma(M, T^*_{\R}M)$ with generator relations as follows.
$$C_{\gr}:=<u\in \Gamma(M, T^*_{\R}M)\mid uv+uv=2g(u,v), \forall u,v\in \Gamma(M, T^*_{\R}M)>.$$
Although $C_{\gr}$ is isomorphic to $B_{\gr}$ through the map
$$B_{\gr}\to C_{\gr}: \, u\mapsto \sqrt{-1}u, \, \forall  u\in \Gamma(M, T^*_{\R}M),$$
we consider a different representation on $\Omega(M)$. Let $c_L'$ be the representation of $C_{\gr}$ on $\Omega(M)$ given by
\begin{equation}\label{II}c_L'(\sqrt{-1}u)=\left[\begin{matrix} 1 & 0
\\ 0 & -\sqrt{-1} \end{matrix}\right] *^{-1}  c_L(u) * \left[\begin{matrix} 1 & 0
\\ 0 & \sqrt{-1} \end{matrix}\right] , \quad  \forall u\in \Gamma(M, T^*_{\R}M).\end{equation}
Then
$$c_L'(u)(\omega)= (-1)^{k-1}  *^{-1}  c_L(u)  * \omega, \quad \forall u\in \Gamma(M, T^*_{\R}M), \forall \omega\in \Omega^k(M),$$
and therefore
$$c_L'(u)=(-1)^{\deg}c_R(u),\quad \forall u\in \Gamma(M, T^*_{\R}M).$$
From Kasparov \cite{Kasparov}, {\em the spectral triple representing Kasparov's fundamental class} is supposed to be
$$(\A_3, \mathcal{H}_3, D_3, \gamma_3)=(C_{\gr}, \Omega(M), d+d^*, (-1)^{\deg}),$$
which, because of (\ref{HH})(\ref{II}), is unitarily equivalent to $(\A_2, \mathcal{H}_2, D_2, \gamma_2)$. Thus we have the following theorem.
\begin{thm}The projective spectral triple
$(\A_1, \mathcal{H}_1, D_1, \gamma_1)$ is Morita equivalent to Kasparov's spectral triple $(\A_3, \mathcal{H}_3, D_3, \gamma_3)$.
\end{thm}

For a recent account of Kasparov's fundamental class on noncommutative Riemannian manifolds, we refer the reader to Lord-Rennie-Varilly \cite{Lord}.

\end{section} 
\begin{section}{Projective spectral triple as fundamental class in $K_0(M,W_3)$}
In this section we see how projective spectral triples represent the fundamental classes in the twisted K-homology $K^0(\A_{W_3})\cong K_0(M,W_3)$.

Denote by $\mathbf B_{\mathrm {gr}}$ the $\Z_2$-graded algebra of sections of Clifford bundle $\Cl(T^*M)$ over $M$ (even dimensional only), then every Clifford module $E$ over $M$ can be considered as a finitely generated projective right $\mathbf B_{\mathrm {gr}}^{\mathrm{op}}$-module, and a Clifford connection $\nabla^E$ on $E$ gives rise to a Dirac operator $D^E$ on $E$. Then $E\mapsto\ind\, D^E$ defines a canonical homomorphism
$$K_0(\mathbf B_{\mathrm {gr}}^{\mathrm{op}})\xrightarrow{\ind} \Z.$$
By Morita equivalence, $K_0(\mathbf B_{\mathrm {gr}}^{\mathrm{op}})$ can be replaced by the K-theory of an ungraded algebra, $K_0(\A_{W_3})$, and the homomorphism $\ind$ becomes the operation of pairing with the projective spectral triple.

\begin{thm}[Poincar\'e duality]\label{duality} For an even dimensional closed oriented manifold $M$, the projective spectral triple $\varsigma=(\A_{W_3},\mathcal H_{W_3},D_{W_3},\gamma_{W_3})$ represents the twisted K-orientation as a cycle of the twisted K-homology $K^0(\overline{\A_{W_3}})\cong K_0(M,W_3)$, and hence gives rise to the Poincar\'e duality
$$K^0(M,W_3-c)\xrightarrow[\cong]{\frown[\varsigma]}K_0(M,c), \quad\text{or}\quad K^{\bullet}(M,c)\times K^{\bullet}(M,W_3-c)\xrightarrow[\text{\rm pairing}]{ \text{\rm nondegenerate}} \Z,$$
for all $c\in H^3(M,\Z).$
Here the cap product can be defined by $[\E]\frown [\varsigma]=[\varsigma^{\E}]$ for any finite Kasparov $C^{\infty}(M)$-$\A_{W_3}$-module $\E$.

For odd dimensional $M$, the Poincar\'e duality can be formulated as
$$K^{\bullet}(M,c)\times K^{{\bullet}+1}(M,W_3-c)\xrightarrow[\text{\rm pairing}]{\text{\rm nondegenerate}} \Z, \quad\forall c\in H^3(M,\Z).$$
\end{thm}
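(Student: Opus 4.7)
The plan is to reduce the claim to Kasparov's classical Poincar\'e duality for oriented manifolds, with the projective spectral triple $\varsigma$ playing the role of the ungraded Morita-equivalent version of Kasparov's fundamental class $[\mu_M]\in KK(C(M)\otimes C_{\tau}(M),\C)$. First verify that $\varsigma$ really is a spectral triple and defines a class $[\varsigma]\in K^0(\overline{\A_{W_3}})\cong K_0(M,W_3)$: the operator $D_{W_3}=(d-d^*)(-1)^{\deg}$ is first-order elliptic self-adjoint on $\Omega(M)$, so it has compact resolvent with $W^{\infty}(D_{W_3})=\Omega(M)$ by elliptic regularity, and by Corollary \ref{projectivesptr} the commutator $[D_{W_3},a]$ is bounded right Clifford multiplication by $da$ for every $a\in\B$. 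The odd-dimensional case is handled identically, producing an odd K-homology class.

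In the spin$^c$ case, the preceding theorem provides a Morita equivalence of $\varsigma$ with the spin spectral triple $\varsigma_1$, and $[\varsigma_1]$ gives the ordinary Poincar\'e duality $K^*(M)\cong K_*(M)$ by the Kasparov--Connes theorem. Under the functoriality of the index pairing and cap product with respect to Morita equivalence (the commutative diagrams at the end of section \ref{morita}), this duality transports to the twisted duality with $[\varsigma]\in K_0(M,W_3)$. For general oriented $M$, I would invoke Kasparov's theorem: there is a canonical fundamental class $[\mu_M]\in KK(C(M)\otimes C_{\tau}(M),\C)$ inducing the twisted duality $K^*(M,c)\cong K_*(M,W_3-c)$ for every $c\in H^3(M,\Z)$, where $C_{\tau}(M)=\Gamma(M,\Cl(T^*M))$ is $\Z_2$-graded. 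The graded-versus-ungraded Morita equivalence between $C_{\tau}(M)$ and $\B$ (both have Dixmier--Douady class $W_3$) transfers $[\mu_M]$ to a class in $K_0(\A_{W_3})$, and one identifies this transfer with $[\varsigma]$ by matching principal symbols: both operators have left Clifford multiplication as symbol under the canonical isomorphism $\Omega(M)\cong\Gamma(M,\Cl(T^*M))$, and two self-adjoint elliptic first-order operators acting on the same bundle with the same principal symbol determine the same K-homology class.

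The cap-product formula $[\E]\frown[\varsigma]=[\varsigma^{\E}]$ is then the statement that the Kasparov product $[\E]\otimes_{\A_{W_3}}[\varsigma]$ is represented at the unbounded level by $\varsigma^{\E}$, which is precisely the construction of $\sigma^{\E}$ from section \ref{morita} understood as the unbounded Kasparov product in the sense of Baaj--Julg and Mesland. The odd-dimensional pairing and the more general twisted pairings $K^*(M,c)\times K^*(M,W_3-c)\to\Z$ follow from the same construction applied to Kasparov bimodules representing classes in the appropriate twisted K-groups.

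The main obstacle is the global identification of $[\varsigma]$ with the transfer of Kasparov's $[\mu_M]$ on a non-spin$^c$ manifold. Locally this identification follows from the spin$^c$ case, and globally one must check that the local Morita equivalences from section \ref{glue} patch together consistently with Kasparov's global class -- essentially, both constructions rely on the same transition data $\alpha_{ij}$ on the Clifford bundle, so compatibility reduces to tracking the gluing carefully, alternatively via a Mayer--Vietoris argument for twisted K-homology over a cover by contractible (hence spin$^c$) charts. Once this is secured, nondegeneracy of the pairing is inherited directly from Kasparov's nondegeneracy.
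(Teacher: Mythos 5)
Your proposal follows essentially the same route the paper takes: the paper provides no independent proof but defers to Kasparov, Carey--Wang, and Wang, and your reduction to Kasparov's second Poincar\'e duality in KK-theory via the (graded-versus-ungraded) Morita equivalence between $\Gamma(M,\Cl(T^*M))$ and $\A_{W_3}$, together with the identification of $\sigma^{\E}$ as the unbounded Kasparov product, is precisely the content of those references. The only caveat is that the symbol-matching step identifying $[\varsigma]$ with the transfer of Kasparov's $[\mu_M]$ should be stated at the level of K-homology classes of generalized Dirac operators (i.e.\ elliptic operators whose symbol is Clifford multiplication), which is standard but worth making explicit.
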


See Kasparov \cite{Kasparov}, Carey-Wang \cite{CareyWang}, and Wang \cite{Bai-Ling} for details. When $c$ is $0$, this is a special case of the second Poincar\'e duality theorem \cite{Kasparov} in KK-theory.

\end{section} 
\begin{section}{Local index formula for projective spectral triples}\label{localformula}
In this section we present a local index formula associated to the projective spectral triple for every closed oriented Riemannian manifold $M$ of dimension $2n$. Let $\A=\smooth(M)$. Denote by
$$\varsigma=(\B,\mathcal H,D,\gamma)=(\A_{W_3},\mathcal H_{W_3},D_{W_3},\gamma_{W_3})$$
the projective spectral triple of $M$ defined in the preceding sections. Suppose a K-class $[p]$ or $[\E]$ in $K_0(\B)$ is represented by a projection matrix $p=(p_{ij})\in \M_m(\B)$ or by a right $\B$-module $\E=p \B^m$ respectively.
Let $D^{\E}$ denote the twisted Dirac operator on $\mathcal H^{\E}=\E\otimes_{\B}\mathcal H=p\mathcal H^m$ associated to the projective universal connection $\nabla^{\E}:\E\to\E\otimes_{\B}\Omega^1_u(\B)$ on $\E$, namely $\nabla^{\E}(p\boldsymbol b)=p \delta_u(p\boldsymbol b)$ and $D^{\E}(p\boldsymbol h)=pD(p\boldsymbol h)$, $\forall \boldsymbol b \in \B^m$, $\forall \boldsymbol h \in \mathcal H^m$.

The left $\B$-module $\mathcal H=\mathcal H_+\oplus\mathcal H_-$ is $\Z_2$-graded and so is $\mathcal H^{\E}=\mathcal H_+^{\E}\oplus\mathcal H_-^{\E}$. Denote by $D^{\E}_{\pm}$ the restrictions of $D^{\E}$ to $\mathcal H^{\E}_{\pm}\to \mathcal H^{\E}_{\mp}$. The index of $D^{\E}$ is
$$\ind(D^{\E})=\dim \ker D^{\E}_+ - \dim \ker D^{\E}_-.$$
Using the well-known local index formula (cf. \cite{GetzlerBook}), we have
$$\ind (D^{\E})=\int_M \hat{A}(M)\ch(\mathcal H^{\E}/\mathcal S).$$
$\hat{A}(M)$ is the $\hat{A}$-genus of the manifold $M$,
$$\hat{A}(M)=\det\nolimits^{1/2}\left({R/2\over \sinh(R/2)}\right)\in\Omega^{\ev}(M).$$
The relative Chern character $\ch(\mathcal H^{\E}/\mathcal S)$ is explained as follows. We consider $\mathcal H$ and $\mathcal H^{\E}$ as right Clifford modules with right Clifford actions $c_R$. The connection $\nabla:\mathcal H\to \mathcal H\otimes_{\A}\Omega^1(M)$ on $\mathcal H$ induced by the Levi-Civita connection on $M$ is a right Clifford connection. We can define a right Clifford connection $\nabla^{\mathcal H^{\E}}:\mathcal H^{\E}\to \mathcal H^{\E}\otimes_{\A}\Omega^1(M)$ on $\mathcal H^{\E}$ by $\nabla^{\mathcal H^{\E}}(p\boldsymbol h)=p\nabla(p\boldsymbol h)$. Denote by $R^{\mathcal H^{\E}}\in \End_{\A}(\mathcal H^{\E})\otimes_{\A}\Omega^2(M)$ the curvature of the connection $\nabla^{\mathcal H^{\E}}$,
$$R^{\mathcal H^{\E}}=\nabla^{\mathcal H^{\E}} \nabla^{\mathcal H^{\E}}=p(\nabla p)(\nabla p)+p\nabla^2\circ p,$$
and denote by $T$ the twisting curvature, that is $T=R^{\mathcal H^{\E}}-R^{\mathcal S}$, where
$$R^{\mathcal S}=c_R(R)={1\over 4} R_{ijkl}c_R(e_l)c_R(e_k)e_i\wedge e_j,$$
and $R_{ijkl}$ are the components of the Riemannian curvature tensor on $M$ under an orthonormal frame $\{e_i\}$. One can verify that $T=p(\nabla p)(\nabla p)-p c_L(R)p$. With the above notations, the relative Chern character is
$$\ch(\mathcal H^{\E}/\mathcal S)=2^{-n}\tr \exp(-T).$$
So we have an explicit local index formula
\begin{equation}\label{localindex}\ind(D^{\E})=2^{-n}\int_M \hat{A}(M)\tr \,\exp(-p(\nabla p)(\nabla p)+p c_L(R)p).
\end{equation}
From the viewpoint of noncommutative geometry,
$$\ind (D^{\E})=<[p],[\varsigma]>=<\ch[p],\ch[\varsigma]>,$$
where $\ch[p]\in HP_0(\B)$ and $\ch[\varsigma]\in HP^0(\B)$ are the periodic Connes-Chern characters of $[p]$ and $[\varsigma]$ respectively.
On the other hand, in terms of twisted Chern characters, as defined below,
\begin{equation}\label{relation}\ch_{W_3}[p]:=\Chkr ( \ch[p]) \in H^{\ev}(M,\C), \quad \ch_{W_3}[\varsigma]:=(\Chkr^*)^{-1}(\ch[\varsigma])\in H_{\ev}(M,\C),
\end{equation}
the index pairing can be written as
$$\ind (D^{\E})=<[p],[\varsigma]>=<\ch_{W_3}[p],\ch_{W_3}[\varsigma]>.$$
We now try to give local expressions of $\ch[p]$, $\ch[\varsigma]$, $\ch_{W_3}[p]$, and $\ch_{W_3}[\varsigma]$ as well as their relation (\ref{relation}) explicitly. The periodic Connes-Chern character $\ch[p]$ is represented by a sequence of cyclic cycles $\{\ch^{\lambda}_0(p),\ch^{\lambda}_2(p),...\}$, where
$$\ch^{\lambda}_{2m}(p)=(-1)^m{(2m)!\over m!}\tr(p^{\otimes 2m+1}) \in C^{\lambda}_{2m}(\B).$$
This sequence satisfies the periodicity condition $S(\ch^{\lambda}_{2m+2}(p))=\ch^{\lambda}_{2m}(p)$. An alternative way to represent $\ch[p]$ is to use normalized $(\bb,\BB)$-cycles, that is
$$\ch_{2m}^{(\bb,\BB)}(p)=(-1)^m{(2m)!\over m!}\tr((p-{1\over2})\otimes p^{\otimes 2m}).$$

As for the Connes-Chern character of $\varsigma$, one can apply the Connes-Moscovici \cite{ConnesMoscovici} local index formula to get a normalized $(\bb,\BB)$-cocycle. However, when trying to derive from Connes-Moscovici's formula an expression in terms of integrals of differential forms on $M$, one will be confronted with a very much involved calculation of Wodzicki residues of various pseudo-differential operators. On the other hand, based on the appearance of formula (\ref{localindex}), one can get a $C_{\lambda}$-cocycle $\ch_{\lambda}(\varsigma)=\sum\limits_m \ch_{\lambda}^{2m}(\varsigma)$ as follows:

Let $T(b_1,b_2)=(\nabla b_1)(\nabla b_2)-b_1c_L(R)b_2$, and define $\rho^0_{2m}:\B^{\otimes 2m+1}\to \Omega^{2m}(M)$ by
$$\rho^0_{2m}(b_0,...,b_{2m})= {1\over (2m)!}\tr (b_0 T(b_1,b_2)\cdots T(b_{2m-1},b_{2m})).$$
Then the relative Chern character $\ch(\mathcal H^{\E}/\mathcal S)=2^{-n}\rho^0_{2m}(\ch^{\lambda}_{2m}[p])$.

It is easily seen that $2^{-n}\int_M \hat A(M)\rho^0_{2m}(b_0,...,b_{2m})$ is a Hochschild cocycle but not cyclic cocycle if $m\ge 2$. By applying Theorem \ref{alternative}, we know that
\begin{equation}\tag{\ref{rho}}\rho_{2m}(b_0,...,b_{2m})={1\over (2m)!}\tr(b_0 \psi_{2m}(b_1,...,b_{2m}))\end{equation}
is a cyclic cocycle, and that $\rho_{2m}(\ch^{\lambda}_{2m}(p))=\rho^0_{2m}(\ch^{\lambda}_{2m}(p))$ for all $p$ with $[p]\in K_0(\B)$.
Thus by Theorem \ref{alternative} and the duality theorem (Thm. \ref{duality}), we have the following conclusions:
\begin{thm}\label{character}The cyclic cocycle $\ch_{\lambda}(\varsigma)=\sum\limits_m \ch_{\lambda}^{2m}(\varsigma)$, where
$$\ch_{\lambda}^{2m}(\varsigma)(b_0,...,b_{2m})=2^{-n}\int_M \hat A(M)\rho_{2m}(b_0,...,b_{2m}),\quad \forall b_i\in \B,$$
represents the Connes-Chern character $\ch[\varsigma]$ of the projective spectral triple $\varsigma$.
\end{thm}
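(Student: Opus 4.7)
The plan is to show that the prescribed cocycle $\ch_\lambda(\varsigma)$ agrees with the Connes--Chern character $\ch[\varsigma]\in HP^0(\B)$ by comparing their index pairings against all K-theory classes of $\B$, and then invoking a non-degeneracy argument supplied by Poincar\'e duality (Theorem \ref{duality}).

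First I would verify that $\ch_\lambda(\varsigma)$ is a well-defined cyclic cocycle on $\B$. The multilinear form $\rho_{2m}$ is cyclic and Hochschild-closed by Theorem \ref{alternative}, so the same is true of $\rho_{2m}$ paired against the closed form $\hat A(M)$ and integrated over $M$; cyclicity and $\bb$-closedness of $\ch_\lambda^{2m}(\varsigma)$ then follow formally. I would also check the periodicity relation $S\,\ch_\lambda^{2m+2}(\varsigma)=\ch_\lambda^{2m}(\varsigma)$ so that the sequence defines a class in $HP^0(\B)$; this reduces to the combinatorial identity relating $\psi_k$ at successive levels, together with $\nabla(\sigma(\Omega))=0$.

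Next I would carry out the pairing with an arbitrary K-cocycle. For $[p]\in K_0(\B)$ represented by a projection $p\in M_m(\B)$, the associated twisted Dirac operator $D^\E$ has analytic index given by the classical local formula
\begin{equation*}
\ind(D^\E)=\frac{1}{2^n}\int_M \hat A(M)\,\tr\exp\!\bigl(-p(\nabla p)(\nabla p)+p\,c_L(R)\,p\bigr),
\end{equation*}
which I would expand in powers of the twisting curvature $T(p,p)=(\nabla p)(\nabla p)-p\,c_L(R)\,p$ and recognize the $m$-th term as $\tfrac{1}{2^n(2m)!}\int_M\hat A(M)\,\rho^0_{2m}(\ch^\lambda_{2m}(p))$. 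The key algebraic step is the identity $\rho_{2m}(\ch^\lambda_{2m}(p))=\rho^0_{2m}(\ch^\lambda_{2m}(p))$ for every projection $p$: this is exactly the observation used in the proof of Theorem \ref{alternative}, namely that any term in the expansion of $p\,\psi_{2m}(p,\dots,p)$ containing a factor $p(\nabla p)^{2i+1}p$ vanishes, so only the blocks $p\sigma(\Omega)p=p\,T(p,p)$ (modulo a harmless scalar $2$-form absorbed into the $\hat A$-class when integrated) contribute. Combining these two reductions yields
\begin{equation*}
\ind(D^\E)=\sum_m \ch_\lambda^{2m}(\varsigma)\bigl(\ch^\lambda_{2m}(p)\bigr)=\bigl\langle \ch_\lambda(\varsigma),\,\ch[p]\bigr\rangle.
\end{equation*}

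Finally I would upgrade this equality of pairings into the desired equality of cyclic cohomology classes. Since $\ch[\varsigma]$ is by definition the unique class in $HP^0(\B)$ whose pairing with $\ch[p]$ computes $\ind(D^\E)$, I only need the pairing $K_0(\B)\otimes \C\times HP^0(\B)\to\C$ to be non-degenerate in the second variable on the range of the Connes--Chern character. This is precisely what the isomorphism $\ch:K_0(\B)\otimes\C\xrightarrow{\cong} HP_0(\B)$ (from the CHKR theorem, part $3$) plus the Poincar\'e duality of Theorem \ref{duality} guarantee for the Azumaya algebra $\A_{W_3}$. The main obstacle is the combinatorial identity $\rho_{2m}(\ch^\lambda_{2m}(p))=\rho^0_{2m}(\ch^\lambda_{2m}(p))$ and its compatibility with the relative Chern character $\ch(\mathcal H^\E/\mathcal S)$; everything else is bookkeeping around the local index formula and the functorial properties already established.
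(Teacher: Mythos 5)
Your proposal follows essentially the same route as the paper: establish that pairing $\ch_\lambda(\varsigma)$ against an arbitrary $\ch[p]$ reproduces $\ind(D^\E)$ via the classical local index formula and the identity $\rho_{2m}(\ch^\lambda_{2m}(p))=\rho^0_{2m}(\ch^\lambda_{2m}(p))$ (which follows from $p(\nabla p)^{2i+1}p=0$), and then invoke Poincar\'e duality (Theorem \ref{duality}) together with the CHKR isomorphism to upgrade equality of pairings to equality in $HP^0(\B)$. The paper is equally terse about reconciling the curvature term $\sigma(\Omega)$ in $\psi_2$ with the twisting curvature $-c_L(R)$ in $T$ (up to a scalar $2$-form, controlled because $W_3$ is torsion), so your parenthetical does not introduce a gap beyond what the paper itself leaves implicit.
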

\begin{thm}The Connes-Chern character and the twisted Chern character are related by
$$\ch[\varsigma]=\ch_{W_3}[\varsigma]\circ \sum_m \rho_{2m} \text{ and }\ \ch_{W_3}[p]=\sum_m \rho_{2m}(\ch^{\lambda}_{2m}[p])$$
as identical periodic cyclic cohomology classes and de Rham cohomology classes respectively.
\end{thm}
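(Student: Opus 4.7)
The plan is to unwind definitions and combine Theorem \ref{alternative} (identification of $\rho$ with $\Chkr$ on $HP_*(\B)$ when the Dixmier--Douady class is torsion, which applies since $\delta(\B)=W_3(M)$ and $2W_3(M)=0$) with Theorem \ref{character} (the explicit local formula for $\ch[\varsigma]$).

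First I would dispatch the second identity, which is essentially a definitional unpacking. By the defining equation $\ch_{W_3}[p]:=\Chkr(\ch[p])$ together with $\Chkr=\rho$ on $HP_*(\B)$ from Theorem \ref{alternative}, it suffices to recall that the periodic Connes--Chern character $\ch[p]$ is represented at the chain level by $\sum_m \ch^{\lambda}_{2m}(p)$ and that $\rho$ acts term-by-term as $\sum_m \rho_{2m}$. The proof of Theorem \ref{alternative} moreover shows that each $\rho_{2m}(\ch^{\lambda}_{2m}(p))$ is already a closed form, so passing to cohomology classes yields the stated identity in $H_{\dR}^{\ev}(M)$; the sum terminates for dimensional reasons.

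Next I would establish the first identity. Unwinding $\ch_{W_3}[\varsigma]:=(\Chkr^*)^{-1}(\ch[\varsigma])$ gives the functional identity $\ch[\varsigma]=\ch_{W_3}[\varsigma]\circ\Chkr$ on $HP_*(\B)$, and substituting $\Chkr=\sum_m \rho_{2m}$ at the chain level (Theorem \ref{alternative}) produces the claim. For the explicit identification of the current representing $\ch_{W_3}[\varsigma]$, I would invoke Theorem \ref{character}: the formula
\[
\ch_{\lambda}^{2m}(\varsigma)(b_0,\dots,b_{2m})=\frac{1}{2^n (2m)!}\int_M\hat A(M)\,\rho_{2m}(b_0,\dots,b_{2m})
\]
displays $\ch[\varsigma]$ as the composition of the chain-level $\sum_m\rho_{2m}$ with integration against the current $\sum_m\frac{1}{2^n (2m)!}\hat A(M)$, viewed as an element of $H_{\ev}(M,\C)$ through Poincar\'e duality. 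Thus $\ch_{W_3}[\varsigma]$ is precisely the Poincar\'e dual of $\hat A(M)$ up to the combinatorial factor in each degree, consistent with the result announced in the introduction.

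The main subtlety is well-definedness at the cohomology level: the chain-level map $\rho=\sum_m\rho_{2m}$ depends on a choice of Azumaya connection $\nabla$ and on the lift $\sigma(\Omega)$, so distinct choices differ by explicit exact forms or cyclic coboundaries. Theorem \ref{alternative} guarantees the induced maps on $HP_*$ and $H_{\dR}^*$ are canonical, so both identities become intrinsic after descent. I expect this bookkeeping -- verifying that all chain-level ambiguities are absorbed once one passes to cohomology -- to be the only nontrivial check, and no input beyond Theorems \ref{alternative} and \ref{character} should be needed.
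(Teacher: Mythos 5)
Your proposal is correct and follows the same reasoning the paper leaves implicit: the statement is presented as a direct consequence of Theorem \ref{alternative} (the identification $\Chkr=\rho=\sum_m\rho_{2m}$ on $HP_*(\B)$, available since $\delta(\B)=W_3(M)$ is $2$-torsion) together with the definitions in display (\ref{relation}). You correctly unwind $\ch_{W_3}[\varsigma]:=(\Chkr^*)^{-1}(\ch[\varsigma])$ to obtain $\ch[\varsigma]=\ch_{W_3}[\varsigma]\circ\Chkr=\ch_{W_3}[\varsigma]\circ\sum_m\rho_{2m}$, and $\ch_{W_3}[p]:=\Chkr(\ch[p])=\rho(\ch[p])=\sum_m\rho_{2m}(\ch^{\lambda}_{2m}(p))$. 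The invocation of Theorem \ref{character} and the Poincar\'e-dual description is a useful consistency check, but it belongs more properly to the corollary that follows; for the theorem itself only the definitional unwinding plus Theorem \ref{alternative} are required, as you indeed acknowledge. Your noted concern about chain-level ambiguities is correctly resolved by passing to cohomology, as you say.
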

\begin{cor}
The twisted Chern characters of $[p]$ and $[\varsigma]$ can be represented by
$$\ch_{W_3}[p]= 2^n \,\ch(\mathcal H^{\E}/\mathcal S) \text{ and } \ \ch_{W_3}[\varsigma]=2^{-n}[\hat A(M)]\frown[M]$$
respectively.
\end{cor}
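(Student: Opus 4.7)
The plan is to read off both identities directly from Theorem \ref{character} together with the preceding relations $\ch[\varsigma]=\ch_{W_3}[\varsigma]\circ \sum_m \rho_{2m}$ and $\ch_{W_3}[p]=\sum_m \rho_{2m}(\ch^{\lambda}_{2m}[p])$. In essence, the corollary is a repackaging of the previous two theorems in more geometric language.

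First I would verify the formula for $\ch_{W_3}[p]$. Substitute the standard representative $\ch^{\lambda}_{2m}(p)=(-1)^m\frac{(2m)!}{m!}\tr(p^{\otimes(2m+1)})$ into $\rho_{2m}$. Expanding $\psi_{2m}(p,\dots,p)$ gives a Fibonacci-many sum over partitions of $\{1,\dots,2m\}$ into singletons (contributing a factor $\nabla p$) and adjacent pairs (contributing a factor $p\sigma(\Omega)p$). Any summand in $\tr\bigl(p\,\psi_{2m}(p,\dots,p)\bigr)$ containing an odd-length substring $p(\nabla p)^{2i+1}p$ vanishes, because $p(\nabla p)^{2i+1}p=0$ for every idempotent $p$ (the same observation already used in the proof of Theorem \ref{alternative}). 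This forces every singleton to be paired with an adjacent singleton, so the surviving terms match the expansion of $\rho^0_{2m}(\ch^{\lambda}_{2m}(p))$ exactly; hence $\rho_{2m}(\ch^{\lambda}_{2m}(p))=\rho^0_{2m}(\ch^{\lambda}_{2m}(p))=2^n(2m)!\,\ch(\mathcal H^{\E}/\mathcal S)_{2m}$, using the identification of the relative Chern character recorded just before Theorem \ref{character}. Summing over $m$ yields the first formula.

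Next, for $\ch_{W_3}[\varsigma]$ the defining relation $\ch[\varsigma]=\ch_{W_3}[\varsigma]\circ\sum_m \rho_{2m}$ together with Theorem \ref{character} shows that, for any $\omega\in \Omega^{2m}(M)$ lying in the image of $\rho_{2m}$,
\[\ch_{W_3}[\varsigma](\omega)=\frac{1}{2^n(2m)!}\int_M \hat A(M)\wedge \omega.\]
This is exactly the pairing of the cohomology class $[\omega]\in H^{2m}_{\dR}(M)$ against the homology class $\frac{1}{2^n(\deg)!}[\hat A(M)]\frown [M]$. Since $\rho$ is an isomorphism on periodic cyclic/de~Rham cohomology by Theorem \ref{alternative}, this determines $\ch_{W_3}[\varsigma]$ uniquely as that cap product, giving the second formula.

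The only nontrivial step is the first one: one must confirm that despite the Fibonacci-combinatorics of $\psi_{2m}$, only the ``all-pairs'' partition survives when every argument equals $p$, which is the place where the idempotent identity $p(\nabla p)^{\mathrm{odd}}p=0$ does all the work. Once that reduction is made, both statements follow by direct substitution into the previously established theorems; in particular, no further analysis of the spectral triple, the Chern-Moscovici residue cocycle, or the Poincar\'e duality pairing is required at this stage.
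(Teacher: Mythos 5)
Your proposal is correct and follows essentially the same route the paper intends: the corollary is left unproved in the text precisely because it falls out of the two preceding theorems together with the identification $\ch(\mathcal H^{\E}/\mathcal S)=\tfrac{1}{2^n(2m)!}\rho^0_{2m}(\ch^{\lambda}_{2m}[p])$ and the equality $\rho_{2m}(\ch^{\lambda}_{2m}(p))=\rho^0_{2m}(\ch^{\lambda}_{2m}(p))$, both already recorded in the paragraph just before Theorem \ref{character}. The only place you do extra work is in re-deriving the combinatorial collapse of $\tr\bigl(p\,\psi_{2m}(p,\dots,p)\bigr)$ to $\tr\bigl(p\,\psi_2(p,p)^m\bigr)$ via $p(\nabla p)^{2i+1}p=0$; that step is already carried out inside the proof of Theorem \ref{alternative}, so you could simply cite it rather than redo it. One small wording caveat: it is not that only the ``all-pairs'' partition survives, but that every run of singletons between pair-blocks must have even length, after which grouping adjacent singletons in twos identifies the surviving sum with the expansion of $\psi_2(p,p)^m$ (and implicitly one uses $\sigma(\Omega)=-c_L(R)$ in the Clifford bundle to match $\psi_2(p,p)$ with $T(p,p)$, a convention the paper also takes silently). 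With that phrasing tightened, your derivation of both identities, including invoking the quasi-isomorphism $\rho$ to determine $\ch_{W_3}[\varsigma]$ uniquely from its action on the image of $\rho$, is exactly how the corollary is meant to be read off.
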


\end{section}

\end{document}